\numberwithin{equation}{section}
\setlist[description]{leftmargin=\parindent,labelindent=\parindent}
\definecolor{mygray}{gray}{0.80}
\definecolor{myblue}{rgb}{0.50,0.50, 0.95}
\definecolor{myred}{rgb}{0.99, 0.51, 0.65}
\let\OLDthebibliography\thebibliography
\renewcommand\thebibliography[1]{\OLDthebibliography{#1}
\setlength{\parskip}{0pt}
\setlength{\itemsep}{0.5pt}}
\newtheorem{theorem}{Theorem}[section]
\newtheorem{theorem*}{Theorem}
\newtheorem{corollary}[theorem]{Corollary}
\newtheorem{corollary*}[theorem*]{Corollary}
\newtheorem{lemma}[theorem]{Lemma}
\newtheorem{proposition}[theorem]{Proposition}
\theoremstyle{definition}
\newtheorem{definition}[theorem]{Definition}
\newtheorem{remark}[theorem]{Remark}
\newtheorem*{question*}{Question}
\newtheorem{conjecture}[theorem]{Conjecture}
\newtheorem*{conjecture*}{Conjecture}
\newtheorem{example}[theorem]{Example}
\newtheorem*{notation*}{Notation}
\newtheorem*{claim*}{Claim}
\newtheorem{definitiontheorem}[theorem]{Definition-Theorem}
\begin{document}
\begin{spacing}{1.2}

\title{\textbf{On $\tau$-tilting finiteness of the Schur algebra}}
\author{Qi Wang\thanks{Q.W. is supported by the JSPS Grant-in-Aid for JSPS Fellows \text{20J10492}.}}

\keywords{Schur algebras, $\tau$-tilting finite, representation-finiteness.}
\abstract{\ \ \ \ We determined the $\tau$-tilting finiteness of Schur algebras over an algebraically closed field of arbitrary characteristic, except for a few small cases.}
\maketitle

\section{Introduction}
Throughout this paper, we always assume that $A$ is a finite-dimensional basic algebra over an algebraically closed field $\mathbb{F}$ of characteristic $p>0$. In particular, the representation type of $A$ is divided into representation-finite, (infinite-)tame and wild.

In recent years, $\tau$-tilting theory introduced by Adachi, Iyama and Reiten \cite{AIR} has drawn more and more attention. Here, $\tau$ is the Auslander-Reiten translation. The crucial concept of $\tau$-tilting theory is the notion of \emph{support $\tau$-tilting modules} (see Definition \ref{def-tau-tilting}), which can be regarded as a generalization of classical tilting modules from the viewpoint of mutation, that is, any basic almost complete support $\tau$-tilting module is a direct summand of exactly two basic support $\tau$-tilting modules. Moreover, support $\tau$-tilting modules are in bijection with many important objects in representation theory, such as two-term silting complexes, functorially finite torsion classes, left finite semibricks, etc. We refer to \cite{AIR} and \cite{Asai} for more details.

Similar to the representation finiteness, a modern notion named \emph{$\tau$-tilting finiteness} is introduced by Demonet, Iyama and Jasso \cite{DIJ-tau-tilting-finite}. We recall that an algebra $A$ is said to be $\tau$-tilting finite if it has only finitely many pairwise non-isomorphic basic support $\tau$-tilting modules. (There are several equivalent conditions in Definition \ref{def-tau-tilting-finite} and Proposition \ref{tau-tilting-finite-rigid}.) A typical example of $\tau$-tilting finite algebras is the class of representation-finite algebras. We remark that the $\tau$-tilting finiteness for several classes of algebras is determined, such as algebras with radical square zero \cite{Ada-rad-square-0}, preprojective algebras of Dynkin type \cite{Mizuno}, Brauer graph algebras \cite{AAC-brauer grapha}, biserial algebras \cite{Mousavand-biserial alg} and minimal wild two-point algebras \cite{W-two-point}. In addition, it has been proved in some cases that $\tau$-tilting finiteness coincides with representation-finiteness, including cycle-finite algebras \cite{MS-cycle-finite}, gentle algebras \cite{P-gentle}, cluster-tilted algebras \cite{Z-tilted}, simply connected algebras \cite{W-simply}, quasi-tilted algebras, locally hereditary algebras, etc. \cite{AHMW-joint}.

In this paper, we focus on (classical) Schur algebras, which play an important role in both Representation theory and Lie theory. This class of algebras arose in connection with the theory of polynomial representations over the general linear group $\mathsf{GL}_n(\mathbb{F})$ and then received widespread attention. So far, the representation theory of Schur algebras has developed very well and many derivatives appeared, such as $q$-Schur algebras, infinitesimal Schur algebras, Borel-Schur algebras and so on. In particular, the representation type of Schur algebras is completely determined by various authors, including Erdmann \cite{Erdmann-finite}, Xi \cite{Xi-schur}, Doty-Nakano \cite{DN-semisimple} and  Doty-Erdmann-Martin-Nakano \cite{DEMN-tame schur}. We summarize these results in Proposition \ref{summary}.

Let $n,r$ be two positive integers and $V$ an $n$-dimensional vector space over $\mathbb{F}$. We denote by $V^{\otimes r}$ the $r$-fold tensor product $V\otimes_\mathbb{F}V\otimes_\mathbb{F}\dots\otimes_\mathbb{F}V$. Then, the symmetric group $G_r$ has a natural action (by permutation) on $V^{\otimes r}$ which makes it to be a module over the group algebra $\mathbb{F}G_r$ of $G_r$. Then, the endomorphism ring $\mathsf{End}_{\mathbb{F}G_r}\left ( V^{\otimes r} \right )$ is called the \emph{Schur algebra} (see \cite[Section 2]{Martin-schur alg}) and we denote it by $S(n,r)$.

Our initial motivation is to check whether $\tau$-tilting finiteness and representation-finiteness coincide in the class of Schur algebras. It is clear that representation-finite Schur algebras are $\tau$-tilting finite. Moreover, we have already determined the number of pairwise non-isomorphic basic support $\tau$-tilting modules blockwise in Theorem \ref{result-finite}. Therefore, we start with the tame cases.
\begin{theorem}[Theorem \ref{result-tame}]
If the Schur algebra $S(n,r)$ is tame, it is $\tau$-tilting finite.
\end{theorem}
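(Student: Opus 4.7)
The plan is to exploit the classification of tame Schur algebras recorded in Proposition \ref{summary}. That classification provides a finite and explicit list of triples $(n,r,p)$ for which $S(n,r)$ is tame but not representation-finite, so it suffices to dispose of each such algebra individually; the remaining tame Schur algebras, being representation-finite, are $\tau$-tilting finite for free.

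The first step is to reduce to the block level. Since $S(n,r)$ is a direct product of its blocks and $\tau$-tilting finiteness is preserved by and detected on direct factors, I only need to treat those blocks of $S(n,r)$ that are strictly tame. Using the explicit Morita-equivalent basic algebras in \cite{Erdmann-finite} and \cite{DEMN-tame schur}, each such block can be written as a path algebra of a small quiver modulo an explicit admissible ideal; in particular, the number of simples in any such block is small.

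The second step is, for each tame block $B$ on the list, to identify $B$ with an algebra from a class whose $\tau$-tilting finiteness is already known. Concretely, whenever $B$ is special biserial or a Brauer graph algebra, the results of \cite{Mousavand-biserial alg} and \cite{AAC-brauer grapha} apply directly; when the underlying quiver has at most two vertices, the minimal wild two-point classification of \cite{W-two-point} applies; when $B$ is simply connected or of small global dimension, the results of \cite{W-simply} and \cite{AHMW-joint} apply. For the residual blocks, I would compute the support $\tau$-tilting quiver directly by iterated mutation from $B$ itself, using the bijection with $2$-term silting complexes from \cite{AIR} together with the criterion of Proposition \ref{tau-tilting-finite-rigid}, and conclude $\tau$-tilting finiteness from finiteness of the mutation graph.

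The main obstacle will be dealing with those tame blocks whose basic algebras do not immediately fit any previously-studied class. For these the argument has to be carried out by hand, most efficiently by invoking the idempotent-truncation reduction of \cite{DIJ-tau-tilting-finite} (and the corresponding reduction for quotients by central idempotents) to pass to a smaller, better-understood algebra, and then enumerating bricks or $\tau$-rigid pairs explicitly. Since the list of genuinely tame blocks is short and each block has only a few simples, this case analysis is tractable, but executing it cleanly for every exceptional characteristic is where the bulk of the work lies.
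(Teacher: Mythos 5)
Your proposal follows essentially the same route as the paper: use the classification in Proposition \ref{summary} to reduce to the four explicit tame block algebras $\mathcal{D}_3$, $\mathcal{D}_4$, $\mathcal{R}_4$, $\mathcal{H}_4$ (all other blocks of tame Schur algebras being representation-finite), and then establish $\tau$-tilting finiteness of each by quotienting out central elements of the radical (Proposition \ref{center}) and computing the finite Hasse quiver of support $\tau$-tilting modules directly, which is exactly Lemma \ref{method of D_4}. The one caveat is that most of the previously-studied classes you hope to invoke do not actually cover these blocks (none is a two-point or simply connected algebra, and only $\widetilde{\mathcal{R}_4}$, after the central-element reduction, becomes a representation-finite string algebra), so your fallback of direct mutation computation carries essentially all of the load, as it does in the paper.
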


In order to prove the above result, we may show that any block algebra of tame Schur algebras is $\tau$-tilting finite. Let $S(n,r)$ be a tame Schur algebra and $A:=eS(n,r)e$ an indecomposable block algebra with a central idempotent $e$ of $S(n,r)$. It is known from \cite{Erdmann-finite} and \cite[Section 5]{DEMN-tame schur} that the block algebra $A$ is Morita equivalent to one of $\mathbb{F}$, $\mathcal{A}_2$, $\mathcal{D}_3$, $\mathcal{D}_4$, $\mathcal{R}_4$ and $\mathcal{H}_4$ (see Section \ref{section-finite-tame} for the definitions). Then, we have

\begin{theorem}[Theorem \ref{result-finite} and Lemma \ref{method of D_4}]
Let $\mathsf{s\tau\text{-}tilt}\ A$ be the set of pairwise non-isomorphic basic support $\tau$-tilting $A$-modules. Then,
\begin{center}
\renewcommand\arraystretch{1.2}
\begin{tabular}{|c|c|c|c|c|c|}
\hline
$A$&$\mathcal{A}_2$&$\mathcal{D}_3$&$\mathcal{D}_4$&$\mathcal{R}_4$&$\mathcal{H}_4$\\ \hline
$\#\mathsf{s\tau\text{-}tilt}\ A$&6&28&114&88&96 \\ \hline
\end{tabular}.
\end{center}
\end{theorem}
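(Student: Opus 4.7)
The plan is to treat the five algebras one by one, starting from their explicit quiver-and-relations presentations given in Section \ref{section-finite-tame}. The main engine is the Adachi-Iyama-Reiten mutation procedure: begin with the regular module $A$, repeatedly left-mutate at each indecomposable summand, and record every basic support $\tau$-tilting module that appears together with the resulting arrows of the Hasse quiver. Because each of these algebras will turn out to be $\tau$-tilting finite, the mutation graph closes up after finitely many steps, and counting its vertices yields $\#\mathsf{s\tau\text{-}tilt}\ A$. Throughout I would use the bijection between support $\tau$-tilting modules and two-term silting complexes in $\mathsf{K}^b(\mathsf{proj}\,A)$, indexing each node by its $g$-vector so that duplicates are detected immediately and termination of the mutation search can be certified.

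For $\mathcal{A}_2$ (two simples, $6$) and $\mathcal{D}_3$ (three simples, $28$), a direct bare-hands computation is feasible: list the finitely many indecomposable modules, compute $\tau$ on each, enumerate the $\tau$-rigid pairs, and assemble them into support $\tau$-tilting modules. For $\mathcal{R}_4$ and $\mathcal{H}_4$ (four simples, giving $88$ and $96$) the combinatorics is heavier but still manageable: here I would organize the enumeration by the support subset $\Sigma$ of simple modules and, for each $\Sigma$, count sincere support $\tau$-tilting modules of the idempotent quotient $A/\langle e_{\Sigma^c}\rangle$, summing the contributions. Alternatively one may invoke the bijection with functorially finite torsion classes and generate them by closing $\mathsf{add}$-hulls of small bricks.

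The main obstacle is $\mathcal{D}_4$ with $114$ basic support $\tau$-tilting modules, which is why a dedicated Lemma \ref{method of D_4} is stated separately. The strategy I would follow there is to exploit a structural feature of $\mathcal{D}_4$ to cut the work down: for instance, to produce a sequence of idempotent truncations $eAe$ whose mutation quivers are already under control, to identify a quiver symmetry that partitions $\mathsf{s\tau\text{-}tilt}\,\mathcal{D}_4$ into orbits of predictable size, or to find a tilting (equivalently silting) equivalence to a better-understood algebra, since such equivalences preserve the cardinality of $\mathsf{s\tau\text{-}tilt}$. In each case the final step is purely combinatorial: once the mutation quiver is closed under left and right mutation, read off the count of vertices to obtain the values $6,\,28,\,114,\,88,\,96$ in the table.
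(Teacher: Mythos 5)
Your overall strategy---exhaust the Hasse quiver by iterated left mutation starting from $A$, certified by the fact that a finite connected component of $\mathcal{H}(\mathsf{s\tau\text{-}tilt}\ A)$ exhausts all support $\tau$-tilting modules, and stratify the count by support---is in essence what the paper does, and it would in principle terminate and produce the stated numbers. But two of your concrete sub-steps are faulty. First, you propose to handle $\mathcal{D}_3$ by ``listing the finitely many indecomposable modules'' and computing $\tau$ on each: $\mathcal{D}_3$, $\mathcal{D}_4$, $\mathcal{R}_4$ and $\mathcal{H}_4$ are all \emph{tame}, hence representation-infinite, so there is no finite list of indecomposables to work through. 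What is finite is the set of indecomposable $\tau$-rigid modules, and one only knows this a posteriori; the enumeration must go through mutation (or an equivalent finite search), not through the module category. Second, your fallback for $\mathcal{D}_4$ via a ``tilting (equivalently silting) equivalence\dots since such equivalences preserve the cardinality of $\mathsf{s\tau\text{-}tilt}$'' rests on a false premise: a derived equivalence identifies the whole silting posets, but the \emph{two-term} condition is measured against the algebra itself as base silting object, and the interval $[A[1],A]$ is not preserved; derived equivalent algebras can have different numbers of support $\tau$-tilting modules.

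The reduction the paper actually uses, and which your proposal misses, is Proposition \ref{center} (Eisele--Janssens--Raedschelders): killing central elements lying in the radical leaves $\mathsf{s\tau\text{-}tilt}$ unchanged as a poset. This is what makes the hand computations feasible: each of $\mathcal{D}_3$, $\mathcal{D}_4$, $\mathcal{R}_4$, $\mathcal{H}_4$ is first replaced by a smaller quotient $\widetilde{(-)}$, after which $\widetilde{\mathcal{R}_4}$ is a representation-finite string algebra (so the count $88$ can be read off mechanically), and $\mathcal{D}_4$, $\mathcal{H}_4$ are counted by support rank exactly as you suggest, with the rank-$\leqslant 3$ strata reducing to idempotent quotients already computed (e.g.\ $\widetilde{\mathcal{D}_4}/L_4\simeq\widetilde{\mathcal{D}_3}$) and only the $\tau$-tilting modules themselves ($61$, resp.\ $47$) enumerated directly. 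The same device disposes of $\mathcal{A}_2$ without any computation: $\mathcal{A}_m$ is a Brauer tree algebra modulo a central element, so $\#\mathsf{s\tau\text{-}tilt}\ \mathcal{A}_m=\binom{2m}{m}$ by Aoki's formula, giving $6$ for $m=2$. Finally, since the statement is a set of explicit cardinalities, a proof must actually exhibit the terminated mutation data (as the paper does in its Hasse-quiver figures and Appendix A); your proposal stops at the level of a search plan and records no numbers, so as written it does not yet verify $28$, $114$, $88$ or $96$.
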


Since the above results give a negative answer to our initial motivation, we next ask whether all wild Schur algebras are $\tau$-tilting infinite. There are a few cases in $(\star)$ that we currently do not have a suitable method to deal with. Since the number of pairwise non-isomorphic basic support $\tau$-tilting modules for them is huge, we cannot confirm the $\tau$-tilting finiteness of these exceptions by direct calculation. So, we leave these cases here and expect to find a new reduction method (for example, Conjecture \ref{conjecture-cartan}) to show the $\tau$-tilting finiteness.
\begin{center}
$(\star)\ \left\{\begin{aligned}
p&=2,n=2, r=8, 17, 19; \\
p&=2,n=3, r=4;\\
p&=2,n\geqslant 5,r=5; \\
p&\geqslant 5, n=2,p^2\leqslant r\leqslant p^2+p-1.
\end{aligned}\right.$
\end{center}

Now, we are able to determine the $\tau$-tilting finiteness for most wild Schur algebras.
\begin{theorem}[Table \ref{p=2}, Table \ref{p=3} and Table \ref{p=5}]
Let $p$ be the characteristic of $\mathbb{F}$. Except for the cases in $(\star)$, the wild Schur algebra $S(n, r)$ is $\tau$-tilting finite if and only if $p=2$, $n=2, r=6,13,15$ or $n=3, r=5$ or $n=4,r=4$.
\end{theorem}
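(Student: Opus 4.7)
The plan is to go case-by-case through the wild Schur algebras $S(n,r)$ outside the exceptional list $(\star)$, organizing the analysis by the characteristic $p$ and then by the pair $(n,r)$, so as to fill in the three tables Table \ref{p=2}, Table \ref{p=3}, Table \ref{p=5} referenced in the statement. Since $\tau$-tilting finiteness is preserved under Morita equivalence and is detected blockwise, I would first decompose each $S(n,r)$ into its block algebras and identify them via the descriptions from Erdmann \cite{Erdmann-finite}, Xi \cite{Xi-schur} and Doty--Erdmann--Martin--Nakano \cite{DEMN-tame schur}; the tame blocks are already handled by the earlier theorem, so only genuinely wild blocks need new analysis, and the question reduces to a finite list of quiver-with-relations algebras for each $p$.

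For the $\tau$-tilting infinite direction, the main tool is the standard reduction principle: if there exists an idempotent $e \in A$ with $eAe$ $\tau$-tilting infinite, or if $A$ admits a $\tau$-tilting infinite quotient, then $A$ is $\tau$-tilting infinite. For each remaining wild block I would exhibit a small idempotent truncation (cut out by a few vertices of the Gabriel quiver) whose resulting algebra is Morita equivalent, after possibly factoring out further relations, to an algebra already known to be $\tau$-tilting infinite---candidates include non-Dynkin preprojective algebras \cite{Mizuno}, wild two-point algebras classified in \cite{W-two-point} and \cite{Mousavand-biserial alg}, and radical-square-zero algebras with non-Dynkin separated quiver \cite{Ada-rad-square-0}. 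For the family $p \geqslant 5$, $n = 2$, with $r$ outside the window $p^2 \leqslant r \leqslant p^2+p-1$, the uniformity of the block structure at large $r$ should allow a single truncation argument sweeping up all remaining parameters. For the five $\tau$-tilting finite exceptions with $p = 2$, namely $(n,r) \in \{(2,6),(2,13),(2,15),(3,5),(4,4)\}$, I would argue by direct computation: write down each relevant block as a quiver with relations and enumerate the basic support $\tau$-tilting modules via the bijection with two-term silting complexes (or the $\tau$-rigid criterion of Proposition \ref{tau-tilting-finite-rigid}), checking termination by following the mutation graph until it closes up.

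The main obstacle is pinpointing, in each wild case, the right small idempotent truncation so that the resulting subalgebra can be identified with a known $\tau$-tilting infinite algebra; for the cases collected in $(\star)$ this method apparently fails, which is precisely why the author leaves them open and flags the need for a new reduction (Conjecture \ref{conjecture-cartan}). A secondary difficulty lies in the direct verification for the finite $p=2$ exceptions, where the poset of support $\tau$-tilting modules can be sizeable---the numbers for the comparable tame blocks $\mathcal{D}_4$, $\mathcal{R}_4$ and $\mathcal{H}_4$ are already in the dozens or hundreds---so the mutation enumeration must be performed systematically and cross-checked, for instance against the number of two-term pre-silting complexes supported on a fixed set of indecomposable projectives.
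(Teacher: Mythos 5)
Your plan coincides with the paper's proof in all essentials: identify each block algebra via decomposition matrices and Young-module characters, establish $\tau$-tilting infiniteness by locating a $\tau$-tilting infinite subquiver of the Gabriel quiver (the paper isolates three such quivers $\mathsf{Q}_1,\mathsf{Q}_2,\mathsf{Q}_3$ using precisely the radical-square-zero/separated-quiver criterion you cite) and then propagate via the idempotent-truncation and quotient reductions of Lemmas \ref{N>n} and \ref{n+r}, and handle the five finite exceptions for $p=2$ by explicit mutation enumeration. The only notable difference of implementation is that the paper first factors out central elements lying in the radical (Proposition \ref{center}) to shrink the algebras before enumerating, rather than counting two-term silting complexes directly.
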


We have the following answers for our motivations.
\begin{corollary}
Let $p=3$. Then, $S(n,r)$ is $\tau$-tilting infinite if and only if it is wild.
\end{corollary}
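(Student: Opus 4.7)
The plan is to observe that this corollary is essentially a direct combinatorial extraction from the two preceding results and the exceptional list $(\star)$. I would split it into the two implications and handle each by reduction to statements already established in the paper.

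First I would handle the implication ``$\tau$-tilting infinite $\Rightarrow$ wild'', which is valid for arbitrary characteristic $p$ (so in particular for $p=3$). I would argue contrapositively: assume $S(n,r)$ is not wild, so by the classification summarized in Proposition \ref{summary} it is either representation-finite or tame. If it is representation-finite, then it is automatically $\tau$-tilting finite since every representation-finite algebra has only finitely many indecomposables (and hence only finitely many basic support $\tau$-tilting modules). If it is tame, then the theorem quoted in the excerpt as Theorem \ref{result-tame} yields $\tau$-tilting finiteness directly. Together these cover the non-wild case and give the implication.

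For the converse, ``wild $\Rightarrow$ $\tau$-tilting infinite when $p=3$'', the key point is that the exceptional set $(\star)$ contains no case with $p=3$: inspecting $(\star)$ one sees that the exceptions occur only for $p=2$ or $p\geqslant 5$. Consequently, when $p=3$, the classification theorem stated just before the corollary (the one referring to Table \ref{p=3}) applies to every wild $S(n,r)$ without leaving any undecided case. That theorem lists the complete set of wild Schur algebras which are $\tau$-tilting finite, and all of them occur in characteristic $p=2$. Therefore no wild Schur algebra in characteristic $3$ is $\tau$-tilting finite, proving the implication.

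Since both directions reduce to results already in hand, there is no genuine obstacle to overcome; the only thing to verify carefully is the claim that $(\star)$ contains no $p=3$ entry and that Table \ref{p=3} indeed lists zero $\tau$-tilting finite wild algebras in characteristic $3$. Once these purely bookkeeping checks are made, the corollary follows immediately by combining Theorem \ref{result-tame} with the classification theorem for wild Schur algebras.
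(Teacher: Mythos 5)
Your proposal is correct and matches the paper's intended derivation: the corollary is stated without a separate proof precisely because it follows by combining Theorem \ref{result-tame} (tame implies $\tau$-tilting finite), the trivial fact that representation-finite implies $\tau$-tilting finite, and the classification of wild cases in Table \ref{p=3}, after noting that $(\star)$ contains no $p=3$ entries and that every wild $\tau$-tilting finite case listed occurs only for $p=2$. Your bookkeeping checks are accurate, so nothing is missing.
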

\begin{corollary}
Let $p\geqslant 5$. Except for the cases in $(\star)$, $S(n,r)$ is $\tau$-tilting finite if and only if it is representation-finite.
\end{corollary}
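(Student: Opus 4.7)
The plan is to combine the classification of representation type for Schur algebras summarized in Proposition~\ref{summary} with the $\tau$-tilting finiteness results already established in the paper, and then run a short case analysis. First I would dispatch the easy direction: every representation-finite algebra is $\tau$-tilting finite, because the finitely many isomorphism classes of indecomposable modules yield finitely many basic $\tau$-rigid modules, hence finitely many basic support $\tau$-tilting modules.

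For the nontrivial direction I would argue by contrapositive: assume $p\geqslant 5$ and $(n,r)$ avoids the exceptional list $(\star)$; I aim to show that if $S(n,r)$ is not representation-finite, then it is $\tau$-tilting infinite. By Proposition~\ref{summary}, such an $S(n,r)$ is either tame or wild. Inspecting the classification, I expect the tame Schur algebras with $p\geqslant 5$ to be exactly $S(2,r)$ for $p^{2}\leqslant r\leqslant p^{2}+p-1$, that is, precisely the cases already carved out by the last line of $(\star)$. Once this identification is verified, the only remaining non-representation-finite cases with $p\geqslant 5$ and $(n,r)\notin(\star)$ are wild.

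For the wild case I would invoke the theorem preceding this corollary (as recorded in Table~\ref{p=5}): outside $(\star)$, every wild Schur algebra $S(n,r)$ is $\tau$-tilting infinite except for the finite list $p=2,\ (n,r)\in\{(2,6),(2,13),(2,15),(3,5),(4,4)\}$, none of which intervene once $p\geqslant 5$. Putting the three steps together, the contrapositive is complete and the corollary follows.

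The main obstacle is merely bookkeeping: one has to confirm that the tame locus for $p\geqslant 5$ in the classification of Doty--Erdmann--Martin--Nakano coincides with the $p\geqslant 5$ row of $(\star)$, and that no representation-finite case slips through unnoticed. Once this cross-check against Proposition~\ref{summary} is in place, the corollary reduces to a direct appeal to already-established results in the paper, with no further computation of support $\tau$-tilting modules required.
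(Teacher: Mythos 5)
Your overall strategy is the same as the paper's: the corollary is read off from Proposition \ref{summary} combined with Table \ref{p=5}, with the easy direction handled by the fact that representation-finite algebras are $\tau$-tilting finite. One factual correction, however: for $p\geqslant 5$ there are \emph{no} tame Schur algebras at all --- Proposition \ref{summary} lists tame cases only for $p=2$ and $p=3$ --- so the cases $S(2,r)$ with $p^{2}\leqslant r\leqslant p^{2}+p-1$ appearing in the last line of $(\star)$ are wild, not tame (Table \ref{p=5} records them as wild with unknown $\tau$-tilting finiteness). Your proposed identification of the tame locus with that line of $(\star)$ would therefore fail verification, but it fails in a direction that only simplifies the argument: every non-representation-finite $S(n,r)$ with $p\geqslant 5$ is automatically wild, and the two propositions establishing Table \ref{p=5} show that all such algebras outside $(\star)$ are $\tau$-tilting infinite. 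With that correction the case analysis closes and your proof coincides with the paper's (implicit) argument.
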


Actually, except for the cases in $(\star)$, we have observed from the proof process that the Schur algebra $S(n,r)$ is $\tau$-tilting infinite if and only if it has one of $\tau$-tilting infinite quivers $\mathsf{Q}_1, \mathsf{Q}_2$ and $\mathsf{Q}_3$ (see Lemma \ref{tau-tilting infinite quiver}) as a subquiver. This leads us to conjecture that all cases in $(\star)$ are $\tau$-tilting finite, because all the cases in $(\star)$ do not have one of $\mathsf{Q}_1, \mathsf{Q}_2$ and $\mathsf{Q}_3$ as a subquiver.

This paper is organized as follows. In Section \ref{section2}, we first recall some basic materials on $\tau$-tilting theory and the Schur algebra $S(n,r)$. Then, we give several reduction theorems on the $\tau$-tilting finiteness of Schur algebras, such that we only need to consider small $n$ and $r$. At the end of the section, we explain our strategy to prove $S(n,r)$ to be $\tau$-tilting infinite. In Section \ref{section-finite-tame}, we show that all tame Schur algebras are $\tau$-tilting finite. In Section \ref{section-wild}, we determine the $\tau$-tilting finiteness of most wild Schur algebras.

\ \\
\noindent\textit{\textbf{Acknowledgments.}} This paper is inspired by another on-going joint work with Susumu Ariki,  Ryoichi Kase, Kengo Miyamoto and Euiyong Park, where the authors are using $\tau$-tilting theory to classify $\tau$-tilting finite block algebras of Hecke algebras of classical type. I am very grateful to them, especially, Prof. Ariki, for giving me a lot of kind guidance in writing this paper. I am also very grateful to the anonymous referee for giving me many useful suggestions.

\section{Preliminaries}\label{section2}
We recall that any finite-dimensional basic algebra $A$ over an algebraically closed field $\mathbb{F}$ is isomorphic to a bound quiver algebra $\mathbb{F}Q/I$, where $\mathbb{F}Q$ is the path algebra of the Gabriel quiver $Q=Q_A$ of $A$ and $I$ is an admissible ideal of $\mathbb{F}Q$. We refer to \cite{ASS} for more background materials on quiver and representation theory of algebras.

\subsection{$\tau$-tilting theory}
In this subsection, we review some background of $\tau$-tilting theory. However, we only review the background needed for this paper, so that many aspects related to $\tau$-tilting theory are omitted. One may look at \cite{AIR}, \cite{Asai} and \cite{AIRRT} for more materials.

Let $\mathsf{mod}\ A$ be the category of finitely generated right $A$-modules and $\mathsf{proj}\ A$ the category of finitely generated projective right $A$-modules. For any $M\in \mathsf{mod}\ A$, we denote by $\mathsf{add}(M)$ (respectively, $\mathsf{Fac}(M)$) the full subcategory of $\mathsf{mod}\ A$ whose objects are direct summands (respectively, factor modules) of finite direct sums of copies of $M$. We denote by $A^{\mathsf{op}}$ the opposite algebra of $A$ and by $|M|$ the number of isomorphism classes of indecomposable direct summands of $M$. In particular, we often describe $A$-modules via their composition factors. For example, each simple $A$-module $S_i$ is written as $i$, and then $\substack{S_1\\S_2}=\substack{1\\2}$ is an indecomposable $A$-module $M$ with a unique simple submodule $S_2$ such that $M/S_2\simeq S_1$.

We recall that the Nakayama functor $\nu:=D(-)^\ast$ is induced by the dualities
\begin{center}
$D:=\mathsf{Hom}_\mathbb{F}(-,\mathbb{F}): \mathsf{mod}\ A\leftrightarrow  \mathsf{mod}\ A^{\mathsf{op}}$ and $(-)^\ast:=\mathsf{Hom}_A(-,A): \mathsf{proj}\ A\leftrightarrow  \mathsf{proj}\ A^{\mathsf{op}}$.
\end{center}
Then, for any $M\in \mathsf{mod}\ A$ with a minimal projective presentation
\begin{center}
$P_1\overset{f_1}{\longrightarrow }P_0\overset{f_0}{\longrightarrow }M\longrightarrow 0$,
\end{center}
the Auslander-Reiten translation $\tau M$ is defined by the exact sequence:
\begin{center}
$0\longrightarrow \tau M\longrightarrow \nu P_1\overset{\nu f_1}{\longrightarrow} \nu P_0$.
\end{center}

\begin{definition}(\cite[Definition 0.1]{AIR})\label{def-tau-tilting}
Let $M\in \mathsf{mod}\ A$.
\begin{description}\setlength{\itemsep}{-3pt}
  \item[(1)] $M$ is called $\tau$-rigid if $\mathsf{Hom}_A(M,\tau M)=0$.
  \item[(2)] $M$ is called $\tau$-tilting if $M$ is $\tau$-rigid and $\left | M \right |=\left | A \right |$.
  \item[(3)] $M$ is called support $\tau$-tilting if there exists an idempotent $e$ of $A$ such that $M$ is a $\tau$-tilting $\left ( A/A e A\right )$-module.
\end{description}
\end{definition}

In the case of support $\tau$-tilting modules, we may correspondingly define support $\tau$-tilting pairs. For a pair $(M,P)$ with $M\in \mathsf{mod}\ A$ and $P\in \mathsf{proj}\ A$, it is called a support $\tau$-tilting pair if $M$ is $\tau$-rigid, $\mathsf{Hom}_A(P,M)=0$ and $|M|+|P|=|A|$. Obviously, $(M,P)$ is a support $\tau$-tilting pair if and only if $M$ is a $\tau$-tilting $\left ( A/Ae A \right )$-module and $P=eA$.

\begin{definition}\label{def-tau-tilting-finite}
An algebra $A$ is called $\tau$-tilting finite if it has only finitely many pairwise non-isomorphic basic $\tau$-tilting modules. Otherwise, $A$ is called $\tau$-tilting infinite.
\end{definition}

We denote by $\mathsf{\tau\text{-}rigid}\ A$ (respectively, $\mathsf{s\tau\text{-}tilt}\ A$) the set of isomorphism classes of indecomposable $\tau$-rigid (respectively, basic support $\tau$-tilting) $A$-modules. It is known from \cite[Theorem 0.2]{AIR} that any $\tau$-rigid $A$-module is a direct summand of some $\tau$-tilting $A$-module. Moreover, we have the following statement.
\begin{proposition}{\rm{(\cite[Corollary 2.9]{DIJ-tau-tilting-finite})}}\label{tau-tilting-finite-rigid}
An algebra $A$ is $\tau$-tilting finite if and only if one of (equivalently, any of) the sets $\mathsf{\tau\text{-}rigid}\ A$ and  $\mathsf{s\tau\text{-}tilt}\ A$ is a finite set.
\end{proposition}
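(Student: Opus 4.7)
The plan is to establish the equivalence of three finiteness conditions: $A$ being $\tau$-tilting finite in the sense of Definition~\ref{def-tau-tilting-finite}, $\mathsf{\tau\text{-}rigid}\ A$ being finite, and $\mathsf{s\tau\text{-}tilt}\ A$ being finite. Since every basic $\tau$-tilting module is in particular a basic support $\tau$-tilting module (take the idempotent $e=0$ in Definition~\ref{def-tau-tilting}(3)), one automatically has $|\mathsf{\tau\text{-}tilt}\ A|\leq|\mathsf{s\tau\text{-}tilt}\ A|$, so the implication ``$\mathsf{s\tau\text{-}tilt}\ A$ finite $\Rightarrow$ $A$ is $\tau$-tilting finite'' is immediate. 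It then suffices to close the cycle via ``$\tau$-tilting finite $\Rightarrow \mathsf{\tau\text{-}rigid}\ A$ finite $\Rightarrow \mathsf{s\tau\text{-}tilt}\ A$ finite''.

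For the first of these, the crucial input I would invoke is the Bongartz-type completion theorem of \cite{AIR}: every $\tau$-rigid $A$-module is isomorphic to a direct summand of some basic $\tau$-tilting $A$-module. Granting this, each indecomposable $\tau$-rigid module lies in $\mathsf{add}(T)$ for at least one basic $\tau$-tilting module $T$, and each such $T$ has exactly $|A|$ pairwise non-isomorphic indecomposable summands. A double counting then yields
\[
|\mathsf{\tau\text{-}rigid}\ A|\;\leq\;|A|\cdot|\mathsf{\tau\text{-}tilt}\ A|,
\]
so $\tau$-tilting finiteness of $A$ forces $\mathsf{\tau\text{-}rigid}\ A$ to be finite.

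For the second implication, I would note that every basic support $\tau$-tilting module $M$ is recovered from its set of pairwise non-isomorphic indecomposable summands, each of which is $\tau$-rigid and whose total number is at most $|A|$. Hence the assignment $M\mapsto\mathsf{add}(M)\cap\mathsf{\tau\text{-}rigid}\ A$ injects $\mathsf{s\tau\text{-}tilt}\ A$ into the collection of subsets of $\mathsf{\tau\text{-}rigid}\ A$ of cardinality at most $|A|$; when $\mathsf{\tau\text{-}rigid}\ A$ is finite, this target is finite, and therefore $\mathsf{s\tau\text{-}tilt}\ A$ is finite as well.

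The main obstacle is the Bongartz completion itself, which is the only nontrivial input from $\tau$-tilting theory. Without it one could still equate the finiteness of $\mathsf{\tau\text{-}rigid}\ A$ and $\mathsf{s\tau\text{-}tilt}\ A$ by the same subset-counting trick; bridging from the potentially narrower set $\mathsf{\tau\text{-}tilt}\ A$ to $\mathsf{\tau\text{-}rigid}\ A$ is precisely what requires being able to extend any $\tau$-rigid module to an honest $\tau$-tilting (not merely support $\tau$-tilting) module. Once that input is granted, the remaining arguments reduce to elementary pigeonhole and power-set counts.
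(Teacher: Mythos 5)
Your proposal is correct. The paper gives no proof of this proposition --- it is quoted from \cite[Corollary 2.9]{DIJ-tau-tilting-finite} --- so there is nothing to compare against, but your three-step cycle ($\mathsf{s\tau\text{-}tilt}\ A$ finite $\Rightarrow$ $\tau$-tilting finite $\Rightarrow$ $\mathsf{\tau\text{-}rigid}\ A$ finite $\Rightarrow$ $\mathsf{s\tau\text{-}tilt}\ A$ finite) is the standard derivation, and the only nontrivial input, the Bongartz-type completion, is exactly the fact the paper records from \cite[Theorem 0.2]{AIR}. The one step you leave implicit is that an indecomposable summand of a support $\tau$-tilting module is $\tau$-rigid \emph{over $A$} (not merely over the quotient $A/AeA$); this is supplied by the equivalence with support $\tau$-tilting pairs stated in the paper just after Definition \ref{def-tau-tilting}, whose definition requires $M$ to be $\tau_A$-rigid, so your subset-counting injection into the collection of at-most-$|A|$-element subsets of $\mathsf{\tau\text{-}rigid}\ A$ goes through.
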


We recall the concept of left mutation, which is the core concept of $\tau$-tilting theory. Before doing this, we need the definition of minimal left approximation. Let $\mathcal{C}$ be an additive category and $X, Y$ objects of $\mathcal{C}$. A morphism $f: X\rightarrow Z$ with $Z\in \mathsf{add}(Y)$ is called a minimal left $\mathsf{add}(Y)$-approximation of $X$ if it satisfies:
\begin{itemize}\setlength{\itemsep}{-3pt}
  \item every $h\in \mathsf{Hom}_\mathcal{C}(Z,Z)$ that satisfies $h\circ f=f$ is an automorphism,
  \item $\mathsf{Hom}_\mathcal{C}(f,Z'): \mathsf{Hom}_\mathcal{C}(Z,Z')\rightarrow \mathsf{Hom}_\mathcal{C}(X,Z')$ is surjective for any $Z'\in \mathsf{add}(Y)$,
\end{itemize}
where $\mathsf{add}(Y)$ is the category of all direct summands of finite direct sums of copies of $Y$.

\begin{definitiontheorem}(\cite[Definition 2.19, Theorem 2.30]{AIR})\label{def-left-mutation}
Let $T=M\oplus N$ be a basic support $\tau$-tilting $A$-module with an indecomposable direct summand $M$ satisfying $M\notin \mathsf{Fac}(N)$. We take an exact sequence with a minimal left $\mathsf{add}(N)$-approximation $f$:
\begin{center}
$M\overset{f}{\longrightarrow}N'\longrightarrow \mathsf{coker}\ f \longrightarrow 0$.
\end{center}
We call $\mu_M^-(T):=(\mathsf{coker}\ f )\oplus N$ the left mutation of $T$ with respect to $M$, which is again a basic support $\tau$-tilting $A$-module. (The right mutation $\mu_M^+(T)$ can be defined dually.)
\end{definitiontheorem}

We may construct a directed graph $\mathcal{H}(\mathsf{s\tau\text{-}tilt}\ A)$ by drawing an arrow from $T_1$ to $T_2$ if $T_2$ is a left mutation of $T_1$. On the other hand, we can regard $\mathsf{s\tau\text{-}tilt}\ A$ as a poset with respect to a partial order $\leq $. For any $M, N\in \mathsf{s\tau\text{-}tilt}\ A$, let $(M,P)$ and $(N,Q)$ be their corresponding support $\tau$-tilting pairs, respectively. We say that $N\leq M$ if $\mathsf{Fac}(N) \subseteq \mathsf{Fac}(M)$ or, equivalently, $\mathsf{Hom}_A(N,\tau M)=0$ and $\mathsf{add}(P) \subseteq \mathsf{add}(Q)$. Then, $\mathcal{H}(\mathsf{s\tau\text{-}tilt}\ A)$ is exactly the Hasse quiver on the poset $\mathsf{s\tau\text{-}tilt}\ A$, see \cite[Corollary 2.34]{AIR}.

The following statement implies that an algebra $A$ is $\tau$-tilting finite if we can find a finite connected component of $\mathcal{H}(\mathsf{s\tau\text{-}tilt}\ A)$.
\begin{proposition}{\rm{(\cite[Corollary 2.38]{AIR})}}
If the Hasse quiver $\mathcal{H}(\mathsf{s\tau\text{-}tilt}\ A)$ contains a finite connected component, it exhausts all support $\tau$-tilting $A$-modules.
\end{proposition}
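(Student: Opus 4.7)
The plan is to show that any finite connected component $C$ of $\mathcal{H}(\mathsf{s\tau\text{-}tilt}\ A)$ must coincide with the whole of $\mathsf{s\tau\text{-}tilt}\ A$, exploiting the poset structure together with the regularity of the Hasse quiver. I would record three structural inputs. (i) The module $A$ itself is $\tau$-tilting (since $\tau A=0$ and $|A|=|A|$), and, because every support $\tau$-tilting $T$ satisfies $\mathsf{Fac}(T)\subseteq \mathsf{mod}\ A=\mathsf{Fac}(A)$, it is the unique maximum of the poset $(\mathsf{s\tau\text{-}tilt}\ A,\leq)$; dually, $0$ is the unique minimum. (ii) By the mutation theorem of Adachi--Iyama--Reiten, each $T$ has exactly $|A|$ Hasse-neighbors, obtained by mutating at each indecomposable summand of the associated support $\tau$-tilting pair $(T,eA)$, so any connected component of $\mathcal{H}(\mathsf{s\tau\text{-}tilt}\ A)$ is closed under all mutations. (iii) If $T\neq A$ then $eA\neq 0$, and mutation at any indecomposable summand of $eA$ yields a cover $T\lessdot T\oplus M'$ with $|e'A|=|eA|-1$; dually for $T\neq 0$.

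Granted these inputs, let $T_{\max}$ be the maximum of $C$, which is well-defined since $C$ is finite. If $T_{\max}\neq A$, input (iii) provides a cover $T_{\max}\lessdot T'$, and $T'$ is a Hasse-neighbor of $T_{\max}$, hence $T'\in C$ by (ii), contradicting maximality of $T_{\max}$. Therefore $T_{\max}=A$, and the symmetric argument using left mutations at indecomposable summands of the $\tau$-tilting part forces $\min C=0$. Thus $C$ already contains both the global top and the global bottom of $\mathsf{s\tau\text{-}tilt}\ A$.

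The main obstacle is the final step, namely upgrading this to $C=\mathsf{s\tau\text{-}tilt}\ A$. For arbitrary $T\in \mathsf{s\tau\text{-}tilt}\ A$ with connected component $C_T$, I would construct an ascending chain of covers $T\lessdot T_1\lessdot T_2\lessdot\cdots$ lying inside $C_T$: the support-increasing phase, supplied by (iii), terminates in at most $|A|$ steps at a $\tau$-tilting module, and from there (i) guarantees that any non-maximal element still admits a Hasse-neighbor above, prolonging the chain. Once this chain terminates at an element maximal in $C_T$, the argument of the second paragraph applied inside $C_T$ forces that maximum to be the global maximum $A$, so $A\in C_T$; combined with $A\in C$ this yields $C_T=C$ and hence $T\in C$. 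The delicate point, and where the argument is most subtle, is controlling termination of the $\tau$-tilting phase of the chain when $C_T$ is a priori not known to be finite; this is where one must invoke the cover-characterization of the Hasse quiver established in \cite[Corollary 2.34]{AIR}, identifying $\mathcal{H}(\mathsf{s\tau\text{-}tilt}\ A)$ with the global mutation graph, to rule out the existence of a disjoint infinite component above $T$.
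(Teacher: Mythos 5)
The paper does not actually prove this proposition; it is imported verbatim from \cite[Corollary 2.38]{AIR}, so your attempt has to be judged as a from-scratch proof, and as such it has a genuine gap. Your input (iii) produces an upward Hasse neighbour only for those $T$ whose support part $eA$ is nonzero. For a $\tau$-tilting module $T$ with full support but $T\neq A$, all $|A|$ mutations are taken at indecomposable module summands $X$ of $T$, and each such mutation goes \emph{down} exactly when $X\notin\mathsf{Fac}(T/X)$; nothing in (i)--(iii) excludes the possibility that all of them go down, i.e.\ that $T$ is a source of the Hasse quiver other than $A$. The assertion ``(i) guarantees that any non-maximal element still admits a Hasse-neighbor above'' is precisely the nontrivial content of \cite[Theorem 2.35 / Proposition 2.36]{AIR} (if $U<T$ then $U$ has a direct predecessor $U'$ with $U<U'\leq T$, and dually $T$ has a direct successor $T'$ with $U\leq T'<T$); knowing only that $A$ is the global maximum does not produce a cover above $T$, and \cite[Corollary 2.34]{AIR} (arrows $=$ mutations) does not supply it either. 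A minor additional slip: a finite poset has maximal elements, not necessarily a maximum, so $T_{\max}$ should be taken to be a maximal element of $C$ (which suffices for your contradiction).

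The third paragraph has a second, independent problem: your ascending chain is built inside $C_T$, which is not known to be finite, so there is no reason for the chain to terminate, and you concede as much without resolving it. The standard argument avoids this entirely by going in the opposite direction: once the finite component $C$ is known to contain $A$ (via the maximal-element argument, using the predecessor lemma above), one takes an arbitrary $U\in\mathsf{s\tau\text{-}tilt}\ A$, notes $U\leq A$, and repeatedly applies the successor lemma to get $A=T_0>T_1>T_2>\cdots\geq U$ with each $T_{i+1}$ a Hasse neighbour of $T_i$, hence in $C$; finiteness of $C$ forces this strictly decreasing sequence to terminate, and it can only terminate at $U$. So the descent happens inside the component already known to be finite, which is exactly what makes the proof work. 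In short, your reductions (i)--(iii) are fine as far as they go, but the theorem cannot be derived from them alone; the predecessor/successor lemma of \cite{AIR} is an unavoidable extra input.
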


According to the duality $(-)^\ast=\mathsf{Hom}_A(-,A)$, we have
\begin{proposition}{\rm{(\cite[Theorem 2.14]{AIR})}}\label{oppsite algebra}
There exists a poset anti-isomorphism between $\mathsf{s\tau\text{-}tilt}\ A$ and $\mathsf{s\tau\text{-}tilt}\ A^{\mathsf{op}}$.
\end{proposition}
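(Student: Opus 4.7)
The plan is to route the statement through two-term silting complexes in the bounded homotopy category $K^b(\mathsf{proj}\ A)$, exploiting the fact that the duality $(-)^\ast=\mathsf{Hom}_A(-,A)$ extends to a contravariant equivalence $K^b(\mathsf{proj}\ A)\xrightarrow{\sim}K^b(\mathsf{proj}\ A^{\mathsf{op}})$.

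First I would recall from \cite{AIR} the bijection between basic support $\tau$-tilting $A$-modules and isomorphism classes of basic two-term silting complexes in $K^b(\mathsf{proj}\ A)$: to a support $\tau$-tilting pair $(M,P)$ with minimal projective presentation $P_1\xrightarrow{f}P_0\to M\to 0$ is attached the complex
\[
T_{(M,P)}=\bigl(P_1\oplus P\xrightarrow{(f,0)}P_0\bigr)
\]
placed in degrees $-1$ and $0$. Under this bijection the poset order on $\mathsf{s\tau\text{-}tilt}\ A$ coincides with the silting order, characterised by $T\geq T'$ iff $\mathsf{Hom}_{K^b(\mathsf{proj}\ A)}(T,T'[1])=0$. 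I would then define $\Phi\colon\mathsf{s\tau\text{-}tilt}\ A\to \mathsf{s\tau\text{-}tilt}\ A^{\mathsf{op}}$ as the composition: attach the complex $T_{(M,P)}$, apply $(-)^\ast$, shift by $[1]$, and read off the resulting support $\tau$-tilting $A^{\mathsf{op}}$-module. The three items to verify are that the shifted dual is still a two-term silting complex (this transports across the anti-equivalence, since silting is a vanishing-plus-thick-generation condition, and $(-)^\ast$ sends indecomposable projectives of $A$ to those of $A^{\mathsf{op}}$), that $\Phi$ is a bijection (immediate from $(-)^{\ast\ast}\cong\mathsf{id}$ on $\mathsf{proj}\ A$, so iterating the construction returns the original complex), and that $\Phi$ reverses the order (the vanishing condition $\mathsf{Hom}(T,T'[1])=0$ becomes $\mathsf{Hom}({T'}^\ast,T^\ast[1])=0$ after the shift, which flips the silting order).

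The main obstacle will be the bookkeeping needed to translate the silting data back to a genuine support $\tau$-tilting \emph{pair}: one must identify which summand of $T_{(M,P)}^\ast[1]$ plays the role of the extra projective in the pair corresponding to $\Phi(M)$, and confirm that the resulting $A^{\mathsf{op}}$-module is genuinely $\tau$-rigid (not merely the underlying module of a two-term silting complex). Once this identification is settled, the poset anti-isomorphism is a formal consequence of the behaviour of $\mathsf{Hom}$ and degree shift under the duality $(-)^\ast$.
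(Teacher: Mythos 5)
The paper gives no proof of this statement beyond citing \cite[Theorem 2.14]{AIR} and remarking that it follows from the duality $(-)^\ast=\mathsf{Hom}_A(-,A)$; your route through two-term silting complexes is precisely the mechanism behind that citation and is the standard proof, so you are taking essentially the same approach. The argument is correct: the contravariant equivalence $(-)^\ast$ on $K^b(\mathsf{proj}\ A)$ followed by the shift $[1]$ preserves two-term silting complexes, squares to the identity since $(-)^{\ast\ast}\cong\mathsf{id}$ on $\mathsf{proj}\ A$, and turns the condition $\mathsf{Hom}(T,T'[1])=0$ into $\mathsf{Hom}({T'}^\ast[1],(T^\ast[1])[1])=0$, which reverses the order; the remaining bookkeeping you flag is handled by the fact that the correspondence between support $\tau$-tilting pairs and two-term silting complexes is already known to be a poset isomorphism in both directions.
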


We may give an example to illustrate the constructions above.
\begin{example}\label{example-simplest}
Let $A:=\mathbb{F}(\xymatrix{1\ar@<0.5ex>[r]^{\alpha} & 2 \ar@<0.5ex>[l]^{\beta}})/<\alpha\beta,\beta\alpha>$. We denote by $S_1, S_2$ the simple $A$-modules and by $P_1, P_2$ the indecomposable projective $A$-modules. We take an exact sequence with a minimal left $\mathsf{add}(P_1)$-approximation $f$ of $P_2$:
\begin{center}
$\substack{ e_2 \\ \beta }\overset{\alpha \cdot }{\longrightarrow}\substack{e_1\\ \alpha }\longrightarrow \mathsf{coker}\ f\longrightarrow 0$.
\end{center}
Then, $\mathsf{coker}\ f=S_1$ and $\mu_{P_2}^-(A)= P_1\oplus S_1$. Similarly, $\mu_{P_1}^-(A)= S_2\oplus P_2$ and we can compute the left mutations step by step, so that the Hasse quiver $\mathcal{H}(\mathsf{s\tau\text{-}tilt}\ A)$ is
\begin{center}
$\vcenter{\xymatrix@C=1.2cm@R=0.1cm{& P_1\oplus S_1\ar[r]^{>} &S_1\ar[dr]^{>}\\P_1\oplus P_2\ar[dr]^{>}\ar[ur]^{>}&&&0 \\&S_2\oplus P_2\ar[r]^{>}&S_2\ar[ur]^{>}&}}$,
\end{center}
where $>$ is the partial order on $\mathsf{s\tau\text{-}tilt}\ A$. Then, we have
\begin{itemize}\setlength{\itemsep}{-3pt}
\item $\mathsf{\tau\text{-}rigid}\ A=\{P_1, P_2, S_1, S_2, 0\}$ and $\mathsf{s\tau\text{-}tilt}\ A=\{P_1\oplus P_2, P_1\oplus S_1, S_2\oplus P_2, S_1,S_2,0\}$. In particular, $P_1\oplus P_2, P_1\oplus S_1$ and $S_2\oplus P_2$ are $\tau$-tilting $A$-modules.
\item $(S_1,P_2)$ and $(S_2,P_1)$ are support $\tau$-tilting pairs.
\end{itemize}
\end{example}

\subsection{The symmetric group and the Schur algebra}\label{sub2.1}
We refer to some textbooks, such as \cite{James-symmetric}, \cite{Martin-schur alg} and \cite{Sagan-symmetric}, for the representation theory of the symmetric group and the Schur algebra.

Let $r$ be a natural number and $\lambda:=(\lambda_1,\lambda_2,\ldots)$ a sequence of non-negative integers. We call $\lambda$ a partition of $r$ if $\sum_{i\in\mathbb{N}}^{}\lambda_i=r$ with $\lambda_1\geqslant \lambda_2\geqslant \ldots \geqslant 0$, and the elements $\lambda_i$ are called parts of $\lambda$. If there exists an $n\in \mathbb{N}$ such that $\lambda_i=0$ for all $i>n$, we denote $\lambda$ by $(\lambda_1,\lambda_2,\ldots,\lambda_n)$ and call it a partition of $r$ with at most $n$ parts. We denote by $\Omega(r)$ the set of all partitions of $r$ and by $\Omega(n,r)$ the set of all partitions of $r$ with at most $n$ parts. It is well-known that $\Omega(r)$ admits the dominance order $\triangleright$ and the lexicographic order $>$, we omit the definitions.

We denote by $G_r$ the symmetric group on $r$ symbols and by $\mathbb{F}G_r$ the group algebra of $G_r$. Each partition $\lambda$ of $r$ gives a Young subgroup $G_\lambda$ of $G_r$ defined as
\begin{center}
$G_\lambda:=G_{\{1,2,\dots, \lambda_1\}}\times G_{\{\lambda_1+1,\lambda_1+2,\dots, \lambda_1+\lambda_2\}}\times \dots \times G_{\{\lambda_1+\dots+\lambda_{n-1}+1,\lambda_1+\dots+\lambda_{n-1}+2,\dots, r\}}$.
\end{center}
Then, the \emph{permutation $\mathbb{F}G_r$-module} $M^\lambda$ is $1_{G_\lambda}\uparrow^{G_r}$, where $1_{G_\lambda}$ denotes the trivial module for $G_\lambda$ and $\uparrow$ denotes induction.

Let $S(n,r)=\mathsf{End}_{\mathbb{F}G_r}\left ( V^{\otimes r} \right )$ be the Schur algebra. It is well-known (e.g., see \cite[Section 1.6]{Ve-doc-thesis}) that $M^\lambda$ with $\lambda\in \Omega(n,r)$ can be regarded as direct summands of $V^{\otimes r}$. Therefore, we have the following algebra isomorphism,
\begin{center}
$S(n,r)\simeq \mathsf{End}_{\mathbb{F}G_r}\left ( \underset{\lambda\in \Omega(n,r)}{\bigoplus} n_\lambda M^\lambda \right )$,
\end{center}
where $1\leqslant n_\lambda \in \mathbb{N}$ is the number of compositions of $r$ with at most $n$ parts which are rearrangement of $\lambda$.

\subsubsection{Specht modules}
We follow the conventions in \cite{James-symmetric} for the constructions of Specht modules.

We may regard a partition $\lambda$ of $r$ as a box-diagram $[\lambda]$ of which the $i$-th row contains $\lambda_i$-boxes. For a prime number $p$, a partition $\lambda$ (or a diagram $[\lambda]$) is called $p$-regular if no $p$ rows of $\lambda$ have the same length. Otherwise, $\lambda$ (or $[\lambda]$) is called $p$-singular.

Let $\lambda$ be a partition of $r$. A $\lambda$-tableau $t$ is obtained from $[\lambda]$ by filling the boxes by numbers $\{1,2,\dots,r\}$ without repetition. In fact, $t$ is a bijection between the boxes in $[\lambda]$ and the numbers in $\{1,2,\dots,r\}$. For any $\sigma\in G_r$, we define an action $t\cdot \sigma:=t\circ \sigma$ by the composition of the bijection $t$ and the permutation $\sigma$. Then, the column stabilizer of a $\lambda$-tableau $t$ is defined as the subgroup $C_t$ of $G_r$ consisting of permutations preserving the numbers in each column of $t$. Similarly, the row stabilizer of $t$ is the subgroup $R_t$ consisting of permutations preserving the numbers in each row of $t$.

Let $t, t'$ be two $\lambda$-tableaux. We may define a row-equivalence relation $t\sim t'$ if $t'=t\cdot \sigma$ for $\sigma\in R_t$. We denote by $\{t\}$ the equivalence class of $t$ under $\sim$ and call it a $\lambda$-tabloid. We also define a $G_r$-action on a $\lambda$-tabloid $\{t\}$ by $\{t\}\cdot \sigma:=\{t\cdot \sigma\}$ for any $\sigma\in G_r$, and this action is well-defined. Then, the $\lambda$-polytabloid $e_t$ associated with a $\lambda$-tableau $t$ is defined by $e_t:=\{t\}\cdot \kappa_t$, where $\kappa_t:=\sum\limits_{\sigma\in C_t}^{}\text{sgn}(\sigma)\sigma$ is the signed column sum.

Let $M^\lambda$ be the permutation module of $\mathbb{F}G_r$ corresponding to a partition $\lambda$. It is known that $M^\lambda$ is spanned by all $\lambda$-tabloids. Then, we call the submodule $S^\lambda$ of $M^\lambda$ spanned by all $\lambda$-polytabloids the \emph{Specht module} corresponding to $\lambda$. If $\mathbb{F}$ is a field of characteristic zero, the set $\left \{ S^\lambda \mid \lambda\in \Omega(r) \right \}$ is a complete set of pairwise non-isomorphic simple $\mathbb{F}G_r$-modules. If $\mathbb{F}$ is a field of prime characteristic $p$, each Specht module $S^\lambda$ with $\lambda$ being $p$-regular has a unique (up to isomorphism) simple top and we may denote it by $D^\lambda$. Then, the set $\left \{ D^\lambda \mid \lambda\in \Omega(r), \lambda\ \text{is}\ p\text{-regular} \right \}$ is a complete set of pairwise non-isomorphic simple $\mathbb{F}G_r$-modules. (In the case of a $p$-singular partition $\mu$, all of the composition factors of $S^\mu$ are $D^\lambda$ such that $\lambda$ is a $p$-regular partition which dominates $\mu$.)

Let $\mathbb{F}$ be a field of prime characteristic $p$. The decomposition number $[S^\lambda:D^\mu]$ provides how many times each simple module $D^\mu$ occurs as a composition factor of the Specht module $S^\lambda$. If we run all partitions of $r$, we get the decomposition matrix of $\mathbb{F}G_r$. We recall from \cite[Corollary 12.3]{James-symmetric} that the decomposition matrix has the following form.
\begin{center}
$\begin{matrix}
\\ \\
S^{\lambda}, \lambda\ p\text{-regular}\left\{\begin{matrix}
\\
\\
\\
\\
\\
\end{matrix}\right.\\
\\ S^{\lambda}, \lambda\ p\text{-singular}\left\{\begin{matrix}
\\
\\
\end{matrix}\right.
\end{matrix}\begin{matrix}D^{\mu}, \mu\ p\text{-regular}\\\overbrace{\begin{pmatrix}
1&  &  & &\\
*&1& & O& \\
*&*&1\\
\vdots&\vdots&\vdots&\ddots& \\
*&*  &*&\dots  & 1\\
-&-&-&-&-\\
*&*  &*&\dots  & *\\
*&*  &*&\dots  & *
\end{pmatrix}}\end{matrix}$
\end{center}
Although it is still an unsolved problem to determine the decomposition matrix of $\mathbb{F}G_r$, James \cite{James-symmetric} provides us with enough materials to write this paper.

\subsubsection{Young modules}
We recall that a permutation module $M^\lambda$ over $\mathbb{F}$ is liftable by a $p$-modular system and therefore, it has an associated ordinary character $\mathsf{ch}\ M^\lambda$. Let $\chi^\lambda$ be the ordinary character corresponding to Specht module $S^\lambda$ over a field of characteristic zero. Then, $\chi^\lambda$ is a constituent of $\mathsf{ch}\ M^\lambda$ and $\chi^\mu (\mu\neq \lambda)$ is a constituent of $\mathsf{ch}\ M^\lambda$ if and only if $\mu\triangleright \lambda$.

We decompose $M^\lambda$ into some indecomposable direct summands $\oplus_{i=1}^nY_i$ for $n\in \mathbb{N}$. Obviously, each summand $Y_i$ is also liftable and has an associated ordinary character $\mathsf{ch}\ Y_i$. Then, the unique (up to isomorphism) direct summand $Y_i$ which the ordinary character $\chi^\lambda$ occurs in $\mathsf{ch}\ Y_i$, is called the \emph{Young module} corresponding to $\lambda$ and we denote it by $Y^\lambda$. It is well-known that the Young module $Y^\lambda$ has a Specht filtration given by $Y^\lambda=Z_1\supseteq Z_2\supseteq \dots\supseteq Z_k=0$ for some $k\in \mathbb{N}$, where each $Z_i/Z_{i+1}$ is isomorphic to a Specht module $S^\mu$ with $\mu\trianglerighteq \lambda$. Moreover, the Young module $Y^\lambda$ is self-dual, that is, $Y^\lambda\simeq D(Y^\lambda)$ with respect to $D=\mathsf{Hom}_\mathbb{F}(-,\mathbb{F})$. In fact, $D(Y^\lambda)$ becomes a right $\mathbb{F}G_r$-module via $(f\cdot \sigma)(x):=f(x\sigma^{-1})$ for $f\in D(Y^\lambda)$, $\sigma\in G_r$ and $x\in Y^\lambda$.

\begin{proposition}{\rm{(\cite[Section 4.6]{Martin-schur alg})}}
The set $\left \{ Y^\lambda \mid \lambda\in \Omega(n,r) \right \}$ is a complete set of pairwise non-isomorphic Young modules which occurs as indecomposable direct summands of permutation modules in $\left \{ M^\lambda \mid \lambda\in \Omega(n,r) \right \}$.
\end{proposition}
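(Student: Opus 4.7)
The plan is to establish, in order: (1) $Y^\lambda$ is well-defined (unique up to isomorphism) for each $\lambda\in\Omega(n,r)$; (2) the family $\{Y^\lambda\}$ is pairwise non-isomorphic; and (3) every indecomposable summand of every $M^\mu$ with $\mu\in\Omega(n,r)$ is isomorphic to some $Y^\lambda$ with $\lambda\in\Omega(n,r)$.

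Well-definedness rests on the fact that $\chi^\lambda$ is a constituent of $\mathsf{ch}\,M^\lambda$ with multiplicity exactly one (the Kostka number $K_{\lambda\lambda}=1$), so by Krull--Schmidt there is, up to isomorphism, a unique indecomposable summand of $M^\lambda$ whose ordinary character contains $\chi^\lambda$. For (2), if $Y^\lambda\simeq Y^\mu$, then $Y^\mu$ is an indecomposable summand of $M^\lambda$, hence $\chi^\mu\in\mathsf{ch}\,Y^\mu\subseteq\mathsf{ch}\,M^\lambda$; by the criterion recalled in the excerpt this forces $\mu\trianglerighteq\lambda$, and symmetrically $\lambda\trianglerighteq\mu$, so $\lambda=\mu$.

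For (3) I proceed by downward induction on dominance order within $\Omega(n,r)$. First, for any $\mu\in\Omega(n,r)$, every $\nu$ with $\chi^\nu\in\mathsf{ch}\,M^\mu$ satisfies $\ell(\nu)\leq\ell(\mu)\leq n$, because a semistandard tableau of shape $\nu$ and content $\mu$ must fill each column strictly from $\{1,\dots,\ell(\mu)\}$; hence $\nu\in\Omega(n,r)$. The base case is $\mu=(r)$, for which $M^{(r)}$ is the indecomposable trivial module and equals $Y^{(r)}$. In the inductive step I split $M^\mu=Y^\mu\oplus N$; since the unique copy of $\chi^\mu$ in $\mathsf{ch}\,M^\mu$ already lies in $\mathsf{ch}\,Y^\mu$, the character $\mathsf{ch}\,N$ is supported on $\{\chi^\nu:\nu\rhd\mu,\ \nu\in\Omega(n,r)\}$. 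Given an indecomposable summand $X$ of $N$, I pick a dominance-minimal $\nu_0$ with $\chi^{\nu_0}\in\mathsf{ch}\,X$; then I match $X$ against the decomposition of $M^{\nu_0}$ supplied by the inductive hypothesis, concluding that $X$ corresponds to the unique summand of $M^{\nu_0}$ whose character contains $\chi^{\nu_0}$, namely $Y^{\nu_0}$, and by (2) this identification is forced up to isomorphism.

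The main obstacle is the matching step---explaining why the indecomposable $X$, a priori only a summand of $M^\mu$, must actually occur as a summand of $M^{\nu_0}$. I would handle this either by lifting the idempotent of $\mathsf{End}_{\mathbb{F}G_r}(M^\mu)$ cutting out $X$ to a splitting $p$-modular system $(K,\mathcal{O},\mathbb{F})$, decomposing the permutation modules $KM^\mu$ and $KM^{\nu_0}$ into Specht modules in characteristic zero where the comparison is transparent, and then descending back to $\mathbb{F}$; or by invoking the theory of vertices and sources, noting that $M^\mu=1_{G_\mu}\!\uparrow^{G_r}$ is a trivial-source module, so each indecomposable summand is trivial-source and is therefore determined up to isomorphism by its $G_r$-conjugacy class of vertex, which can be matched across $M^\mu$ and $M^{\nu_0}$.
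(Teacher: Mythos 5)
The paper gives no proof of this proposition at all --- it is quoted verbatim from \cite[Section 4.6]{Martin-schur alg} --- so your argument has to stand on its own. Your steps (1) and (2) are correct and standard: $K_{\lambda\lambda}=1$ plus Krull--Schmidt gives well-definedness of $Y^\lambda$, and the two-sided dominance comparison gives pairwise non-isomorphism. The genuine gap is exactly the matching step you flag in (3), and neither proposed repair closes it. Repair (b) rests on a false statement: an indecomposable trivial source module is \emph{not} determined up to isomorphism by its vertex. For a fixed $p$-subgroup $Q$, the trivial source modules with vertex $Q$ correspond to the projective indecomposable modules of $\mathbb{F}[N_{G_r}(Q)/Q]$, of which there are in general many; concretely, for $p=2$ and $r=3$ both $Y^{(2,1)}$ and $Y^{(1^3)}$ are projective, hence share the trivial vertex, yet are non-isomorphic. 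Repair (a) is also insufficient: lifting $X$ to an $\mathcal{O}$-lattice and decomposing $K\otimes_{\mathcal{O}}\widetilde{X}$ into Specht modules records only the ordinary character of a lift of $X$, and an indecomposable module in characteristic $p$ is not determined by that character; more to the point, nothing in (a) produces the split embedding of $X$ into $M^{\nu_0}$ that your induction actually requires. So the completeness claim --- that every indecomposable summand of every $M^\mu$, $\mu\in\Omega(n,r)$, is some $Y^\lambda$ --- remains unproved.

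The statement is true, and the route behind the citation sidesteps the matching problem by counting on the Schur algebra side: the isomorphism classes of indecomposable direct summands of $V^{\otimes r}\simeq\bigoplus_{\lambda\in\Omega(n,r)}n_\lambda M^\lambda$ correspond to conjugacy classes of primitive idempotents of $\mathsf{End}_{\mathbb{F}G_r}(V^{\otimes r})=S(n,r)$, hence to simple $S(n,r)$-modules, which are indexed by $\Omega(n,r)$ by highest weight theory. Since your steps (1) and (2) already exhibit $|\Omega(n,r)|$ pairwise non-isomorphic indecomposable summands $Y^\lambda$, they must exhaust the list, and completeness follows with no induction at all. If you prefer to keep your inductive framework, the matching step needs a real structural input --- for instance the characteristic-independence of $\dim\mathsf{Hom}_{\mathbb{F}G_r}(M^\nu,M^\mu)$ exploited along the lines of James, or Green correspondence combined with the Brauer construction as in Grabmeier's proof --- rather than character bookkeeping alone.
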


It is worth mentioning that if $\lambda$ is a partition with at most two parts, Henke \cite{H-character} provided a formula to calculate $\mathsf{ch}\ Y^\lambda$. We recall these constructions as follows.

Let $p$ be a prime number. There is a $p$-adic decomposition $s=\sum_{s\geqslant 0}^{}s_kp^k$ for any non-negative integer $s$. Now, let $s,t$ be two non-negative integers, we define a function
\begin{center}
$f(s,t)=\prod\limits_{k\in\{0\}\cup \mathbb{N}}^{}\begin{pmatrix}
p-1-s_k\\
p-1-t_k
\end{pmatrix}$,
\end{center}
where we set $\binom{m}{n}=0$ if $m<n$. Moreover, we define
\begin{center}
$g(s,t):=\left\{\begin{aligned}
1 &\ \text{ if } f(2t,s+t)=1,\\
0 &\ \text{ otherwise.}
\end{aligned}\right.$ and $h(s,t):=\left\{\begin{aligned}
1 &\ \text{ if } f(2t+1,s+t+1)=1,\\
0 &\ \text{ otherwise.}
\end{aligned}\right.$
\end{center}

\begin{theorem}{\rm{(\cite[Section 5.2]{H-character})}}\label{henke charac}
Let $(r-k,k)$ be a partition with a non-negative integer $k$ and $\mathsf{ch}\ Y^{(r-k,k)}$ the associated ordinary character of $Y^{(r-k,k)}$.
\begin{description}\setlength{\itemsep}{-3pt}
  \item[(1)] If $r$ is even, then
\begin{center}
  $\mathsf{ch}\ Y^{(r-k,k)}=\sum\limits_{i=0}^{\frac{r}{2}}g(\frac{r}{2}-i,\frac{r}{2}-k)\chi^{(r-i,i)}$.
\end{center}
  \item[(2)] If $r$ is odd, then
\begin{center}
  $\mathsf{ch}\ Y^{(r-k,k)}=\sum\limits_{i=0}^{[\frac{r}{2}]}h([\frac{r}{2}]-i,[\frac{r}{2}]-k)\chi^{(r-i,i)}$,
\end{center}
where $[\frac{r}{2}]$ is the greatest integer less than or equal to $\frac{r}{2}$.
\end{description}
\end{theorem}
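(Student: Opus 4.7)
The plan is to express $\mathsf{ch}\ Y^{(r-k,k)}$ by inverting a known formula for the characters of the two-row permutation modules $M^{(r-k,k)}$, using the decomposition of each $M^{(r-k,k)}$ into Young modules. The payoff of this approach is that both sides of the inversion are indexed by partitions with at most two rows, so everything stays one-dimensional and combinatorial.

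First, I would recall Young's rule in the two-row case, which gives
\[
\mathsf{ch}\ M^{(r-k,k)}=\sum_{i=0}^{k}\chi^{(r-i,i)},
\]
together with the fact that $M^{(r-k,k)}$ decomposes as a direct sum of Young modules $Y^{(r-j,j)}$ with $0\leqslant j\leqslant k$. Combining these produces a unitriangular system
\[
\mathsf{ch}\ M^{(r-k,k)}=\sum_{j=0}^{k}\bigl[M^{(r-k,k)}:Y^{(r-j,j)}\bigr]\cdot \mathsf{ch}\ Y^{(r-j,j)},
\]
whose inversion yields the desired expression of $\mathsf{ch}\ Y^{(r-k,k)}$ as a $\mathbb{Z}$-linear combination of the $\chi^{(r-i,i)}$.

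Second, I would compute the multiplicities $[M^{(r-k,k)}:Y^{(r-j,j)}]$ by appealing to Klyachko's classical formula, which expresses Young module multiplicities in permutation modules in terms of products of mod-$p$ binomial coefficients attached to the $p$-adic digits of the parts. In the two-row case these products reduce to expressions of the form $\prod_{k}\binom{p-1-s_k}{p-1-t_k}$, which is exactly the function $f(s,t)$ in the statement. By Lucas's theorem, each factor is either $0$ or $1$ as soon as the appropriate digit inequalities hold, so $f$ is automatically $\{0,1\}$-valued, and the auxiliary functions $g$ and $h$ then encode precisely which Specht constituents $\chi^{(r-i,i)}$ survive in $\mathsf{ch}\ Y^{(r-k,k)}$ after the triangular inversion is carried out.

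Third, the parity split between $r$ even and $r$ odd reflects the fact that the natural involution $i\mapsto r-i$ on the indexing of two-row partitions has a fixed point at $i=r/2$ precisely when $r$ is even, while for $r$ odd the centre is shifted by $\tfrac{1}{2}$; this is exactly what forces the shift between the arguments $(2t,s+t)$ and $(2t+1,s+t+1)$ in the definitions of $g$ and $h$. The main obstacle will be the last step: verifying that the inversion of the Klyachko multiplicity matrix produces exactly these $\{0,1\}$-valued coefficients rather than a priori arbitrary integers. I would attack this through an induction on the top $p$-adic digit of $r$, using Lucas's theorem and Pascal-type relations on the digit-wise binomials, so that the Möbius inversion on the $p$-adic digit poset collapses to the two sign-free formulas claimed.
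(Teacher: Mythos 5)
The paper itself offers no proof of Theorem \ref{henke charac}: it is quoted directly from Section 5.2 of Henke's thesis \cite{H-character}, so your proposal has to stand on its own rather than be measured against an internal argument. Your skeleton --- Young's rule giving $\mathsf{ch}\ M^{(r-k,k)}=\sum_{i=0}^{k}\chi^{(r-i,i)}$, the decomposition of $M^{(r-k,k)}$ into Young modules $Y^{(r-j,j)}$ with $j\leqslant k$ and multiplicity one on the diagonal, and the resulting unitriangular system to be inverted --- is correct and is indeed one standard route to this formula. But two things do not hold up. First, the assertion that ``by Lucas's theorem each factor is either $0$ or $1$, so $f$ is automatically $\{0,1\}$-valued'' is false: for $p=5$, $s_k=0$, $t_k=2$ one gets $\binom{4}{2}=6$. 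The functions $g$ and $h$ are defined as indicators of the event $f=1$ precisely because $f$ is not an indicator; the digit condition you actually need is that $f(s,t)=1$ if and only if every $p$-adic digit satisfies $s_k=t_k$ or $t_k=p-1$. Second, and more seriously, the entire content of the theorem lives in the step you defer: the two-row $p$-Kostka numbers in the stated product form are not an off-the-shelf consequence of Klyachko's theorem but a result of essentially the same depth as the theorem itself (due to Klyachko--Grabmeier and made explicit by Henke), and the verification that the inversion of that unitriangular matrix collapses to the sign-free $0$--$1$ coefficients $g$ and $h$ is only announced, not performed. As written, the proposal reduces the theorem to two unproved statements each comparable in difficulty to the target.

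If you want a route that avoids the inversion entirely, use reciprocity instead: $Y^{(r-k,k)}$ is the image under the Schur functor of the projective indecomposable $S(2,r)$-module labelled by $(r-k,k)$, so its Specht filtration multiplicities satisfy $[Y^{(r-k,k)}:S^{(r-i,i)}]=[P(r-k,k):\Delta(r-i,i)]=[\nabla(r-i,i):L(r-k,k)]$ by Brauer--Humphreys reciprocity for the quasi-hereditary algebra $S(2,r)$. The decomposition numbers $[\nabla(r-i,i):L(r-k,k)]$ for two-part partitions are classical (James, Carter--Cline) and are given exactly by the digit condition that $g$ and $h$ encode after the reparametrization by $s=\lambda_1-\lambda_2$; the even/odd case split in the statement is nothing more than the parity of $r$ entering that change of variables. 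This yields the coefficients of $\chi^{(r-i,i)}$ in $\mathsf{ch}\ Y^{(r-k,k)}$ directly, with no matrix inversion and no appeal to $p$-Kostka numbers.
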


We now explain how to construct the basic algebra of the Schur algebra $S(n,r)$. Let $B$ be a block of the group algebra $\mathbb{F}G_r$ labeled by a $p$-core $\omega$. It is well-known that a partition $\lambda$ belongs to $B$ if and only if $\lambda$ has the same $p$-core $\omega $. Then, we define
\begin{center}
$S_B:=\mathsf{End}_{\mathbb{F}G_r}\left ( \underset{\lambda\in B\cap\Omega(n,r)}{\bigoplus} Y^\lambda \right )$
\end{center}
and the basic algebra of $S(n,r)$ is $\bigoplus S_B$, where the sum is taken over all blocks of $\mathbb{F}G_r$. Moreover, $S_B$ is a direct sum of blocks of the basic algebra of $S(n,r)$. We remark that if we consider the Young modules $Y^\lambda$ for partitions $\lambda$ of $r$ with at most $n$ parts, the Specht modules $S^\mu$ in the Specht filtration of $Y^\lambda$ and the composition factors $D^\mu$ which appear in $Y^\lambda$ are also corresponding to the partitions with at most $n$ parts. (The reason is that $[S^\lambda:D^\mu]\neq 0 \Rightarrow \mu \unrhd \lambda$.)

We may give an example to illustrate our constructions above.
\begin{example}\label{s(2,11)}
We look at the Schur algebra $S(2,11)$ over $p=2$. Let $B_1$ be the principal block of $\mathbb{F}G_{11}$ and $B_2$ the block of $\mathbb{F}G_{11}$ labeled by $2$-core $(2,1)$. Then, we may find in \cite{James-symmetric} that the parts of the decomposition matrix $[S^\lambda:D^\mu]$ for the partitions in $B_1$ and $B_2$ with at most two parts are
\begin{center}
$B_1: \begin{matrix}
(11)\\
(9,2)\\
(7,4)
\end{matrix}\begin{pmatrix}
1&\\
0&1&\\
1&0&1
\end{pmatrix}$, $B_2: \begin{matrix}
(10,1)\\
(8,3)\\
(6,5)
\end{matrix}\begin{pmatrix}
1&\\
1&1&\\
0&1&1
\end{pmatrix}$.
\end{center}

We determine $S_{B_2}$ as follows. By using the formula given in Theorem \ref{henke charac}, we have
\begin{center}
$\begin{aligned}
\mathsf{ch}\ Y^{(10,1)}&=\chi^{(10,1)},\\
\mathsf{ch}\ Y^{(8,3)}&=\chi^{(10,1)}+\chi^{(8,3)},\\
\mathsf{ch}\ Y^{(6,5)}&=\chi^{(10,1)}+\chi^{(8,3)}+\chi^{(6,5)}.
\end{aligned}$
\end{center}
Similar to the proof of \cite[Lemma 4.4]{Erdmann-finite}, we may read off the Specht filtration of $Y^\lambda$ from the formula, and the composition factors of Young modules are $\{D^{(10,1)}, D^{(8,3)}, D^{(6,5)}\}$. It is obvious that $Y^{(10,1)}=S^{(10,1)}=D^{(10,1)}$. By the decomposition matrix above, the Specht module $S^{(8,3)}$ has composition factors $\{D^{(10,1)}, D^{(8,3)}\}$. Since the top of $S^{(8,3)}$ is $D^{(8,3)}$ and $S^{(8,3)}$ is a submodule of $Y^{(8,3)}$, the simple module $D^{(10,1)}$ is in the socle of $Y^{(8,3)}$. We deduce the radical series of $Y^{(8,3)}$ by using the self-duality of Young modules, that is,
\begin{center}
$Y^{(8,3)}=\begin{matrix}
S^{(10,1)}\\
S^{(8,3)}
\end{matrix}=\begin{matrix}
D^{(10,1)}\\
D^{(8,3)}\\
D^{(10,1)}
\end{matrix}$.
\end{center}
Similarly, the simple module $D^{(10,1)}$ appears in the top of $Y^{(6,5)}$ because $Y^{(6,5)}$ has Specht filtration whose top is $S^{(10,1)}$. As $(6,5)$ is $2$-regular, the top of $S^{(6,5)}$ is $D^{(6,5)}$ and the socle of $S^{(6,5)}$ is $D^{(8,3)}$. Since $S^{(6,5)}$ is the bottom Specht module, it implies that $D^{(8,3)}$ appears in the socle of $Y^{(6,5)}$. By the self-duality of Young modules, we deduce that
\begin{center}
$Y^{(6,5)}=\vcenter{\xymatrix@C=0.01cm@R=0.2cm{&D^{(8,3)}\ar@{-}[dr]\ar@{-}[ddl]&&&D^{(10,1)}\ar@{-}[ddl] \\
& &D^{(6,5)}\ar@{-}[dr]&&\\
D^{(10,1)}&&&D^{(8,3)}&}}$.
\end{center}
Thus, $S_{B_2}=\mathsf{End}_{\mathbb{F}G_{11}}(Y^{(10,1)}\oplus Y^{(8,3)}\oplus Y^{(6,5)})$ is isomorphic to $\mathbb{F}Q/I$ with
\begin{center}
$Q:\xymatrix@C=1cm{(10,1)\ar@<0.5ex>[r]^{\alpha_1}&(6,5)\ar@<0.5ex>[l]^{\beta_1}\ar@<0.5ex>[r]^{\alpha_2}&(8,3)\ar@<0.5ex>[l]^{\beta_2}}$ and $I: \left \langle \begin{matrix}
\alpha_1\beta_1,\beta_2\alpha_2, \alpha_1\alpha_2\beta_2,\alpha_2\beta_2\beta_1
\end{matrix}\right \rangle$,
\end{center}
where we replace each vertex in $Q$ by $\lambda$ associated with the Young module $Y^\lambda$.

Similarly, one may find that $S_{B_1}$ is isomorphic to $\mathbb{F}Q/I\oplus \mathbb{F}$, where
\begin{center}
$Q:\xymatrix@C=1cm{(11)\ar@<0.5ex>[r]^{\alpha_1}&(7,4)\ar@<0.5ex>[l]^{\beta_1}}$ and $I: \left \langle \begin{matrix}
\alpha_1\beta_1
\end{matrix}\right \rangle$.
\end{center}
Therefore, the basic algebra of $S(2,11)$ over $p=2$ is $S_{B_1}\oplus S_{B_2}$.
\end{example}

\subsection{Reduction theorems}
We first recall some reduction theorems for the $\tau$-tilting finiteness of an algebra $A$.
\begin{proposition}{\rm{(\cite[Theorem 5.12]{AIRRT}, \cite[Theorem 4.2]{DIJ-tau-tilting-finite})}}\label{quotient and idempotent}
If $A$ is $\tau$-tilting finite,
\begin{description}\setlength{\itemsep}{-3pt}
  \item[(1)] the quotient algebra $A/I$ is $\tau$-tilting finite for any two-sided ideal $I$ of $A$,
  \item[(2)] the idempotent truncation $eAe$ is $\tau$-tilting finite for any idempotent $e$ of $A$.
\end{description}
\end{proposition}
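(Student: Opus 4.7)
The plan is to use the characterization that an algebra is $\tau$-tilting finite if and only if it admits only finitely many isomorphism classes of bricks, a standard consequence of \cite{DIJ-tau-tilting-finite}. Under this criterion, each part reduces to exhibiting a natural injection from the bricks of the target algebra into the bricks of $A$, since trying to transfer $\tau$-rigid modules directly fails: being $\tau$-rigid depends on the algebra through the minimal projective presentation, whereas being a brick is defined purely in terms of endomorphisms.

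For part (1), the surjection $\pi: A \twoheadrightarrow A/I$ induces a fully faithful inflation functor $\mathsf{mod}\ A/I \hookrightarrow \mathsf{mod}\ A$. For any $M \in \mathsf{mod}\ A/I$ regarded as an $A$-module, every $A$-linear endomorphism factors through the quotient, so $\mathsf{End}_A(M) = \mathsf{End}_{A/I}(M)$, and two such $A$-modules are isomorphic if and only if they are isomorphic as $A/I$-modules. Hence bricks over $A/I$ inflate to pairwise non-isomorphic bricks over $A$, and the finiteness hypothesis for $A$ transfers to $A/I$.

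For part (2), consider the Schur functor $F = (-)e: \mathsf{mod}\ A \to \mathsf{mod}\ eAe$, which is exact and admits the left adjoint $G = - \otimes_{eAe} eA$. The crucial claim is that every brick $N \in \mathsf{mod}\ eAe$ lifts to a brick $M \in \mathsf{mod}\ A$ with $Me \cong N$: one takes $M$ to be a suitable subquotient of $G(N)$, so that the counit of the adjunction identifies $\mathsf{End}_{eAe}(N) \cong \mathsf{End}_A(M)$; because $N$ is recovered as $Me$, this lift is injective on isomorphism classes, and finiteness of the brick set of $A$ forces finiteness of the brick set of $eAe$.

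The main obstacle is pinning down this lifting statement in part (2) precisely. The slickest route is to bypass bricks and argue via functorially finite torsion classes: by the Adachi-Iyama-Reiten bijection, $\mathsf{s\tau\text{-}tilt}\ A$ is in bijection with the set of functorially finite torsion classes in $\mathsf{mod}\ A$, and the Schur functor $F$ induces a surjection from functorially finite torsion classes in $\mathsf{mod}\ A$ (restricted to the interval above $\mathsf{Fac}(eA)$) onto those in $\mathsf{mod}\ eAe$. Finiteness of the former then forces finiteness of the latter, giving part (2). This is the path followed in \cite[Theorem 5.12]{AIRRT} and \cite[Theorem 4.2]{DIJ-tau-tilting-finite}, which can be quoted to finish.
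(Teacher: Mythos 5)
The paper does not prove this proposition at all --- it is quoted verbatim from \cite[Theorem 5.12]{AIRRT} and \cite[Theorem 4.2]{DIJ-tau-tilting-finite} --- so there is no internal proof to compare against; your sketch is a reasonable reconstruction of the cited arguments and is essentially correct. Part (1) is complete as stated: $\mathsf{mod}\ A/I$ is a full subcategory of $\mathsf{mod}\ A$ closed under isomorphism, so bricks inject, and the brick-finiteness characterization of $\tau$-tilting finiteness from \cite{DIJ-tau-tilting-finite} finishes it. In part (2) the only vague point is the phrase ``a suitable subquotient of $G(N)$''; the standard way to pin this down is to take $M$ to be the quotient of $N\otimes_{eAe}eA$ by its largest $A$-submodule annihilated by $e$ (equivalently, the image of the natural map $N\otimes_{eAe}eA\rightarrow \mathsf{Hom}_{eAe}(Ae,N)$), for which one checks $Me\simeq N$ and that restriction $\mathsf{End}_A(M)\rightarrow\mathsf{End}_{eAe}(N)$ is bijective because $M$ has no nonzero submodule killed by $e$; this is exactly the construction used in \cite{DIJ-tau-tilting-finite}. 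Your alternative route via the surjection of lattices of torsion classes is the \cite{AIRRT} argument (note that for $\tau$-tilting finite $A$ the total number of torsion classes is finite, so one need not worry about preserving functorial finiteness), though the parenthetical ``restricted to the interval above $\mathsf{Fac}(eA)$'' is not quite the right statement of that lattice map and could simply be dropped.
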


In addition, Eisele, Janssens and Raedschelders \cite{EJR18} provided us with a powerful reduction theorem, as shown below.
\begin{proposition}{\rm{(\cite[Theorem 1]{EJR18})}}\label{center}
Let $I$ be a two-sided ideal generated by central elements which are contained in the Jacobson radical $\mathsf{rad}\ A$ of $A$. Then, there exists a poset isomorphism between $\mathsf{s\tau\text{-}tilt}\ A$ and $\mathsf{s\tau\text{-}tilt}\ (A/I)$.
\end{proposition}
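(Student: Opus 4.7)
The plan is to exhibit the claimed poset isomorphism via restriction of scalars along the projection $\pi: A \twoheadrightarrow A/I$, reducing the main content to a lifting theorem for two-term silting complexes. Define $\Phi: \mathsf{s\tau\text{-}tilt}(A/I) \to \mathsf{s\tau\text{-}tilt}(A)$ by sending an $A/I$-module to the same vector space viewed as an $A$-module through $\pi$. Because $I\subseteq \mathsf{rad}\ A$ and $A$ is finite-dimensional, $I$ is nilpotent and the algebras $A$, $A/I$ share the same simple modules, while indecomposable projectives of $A/I$ lift uniquely to projectives of $A$. Transporting minimal projective presentations through $\pi$ then lets one verify that $\Phi$ is well-defined, injective, and preserves both $\tau$-rigidity and the support condition. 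Order-preservation is automatic, since the condition $\mathsf{Fac}(N)\subseteq \mathsf{Fac}(M)$ that defines the partial order is intrinsic to the underlying abelian category and survives the restriction.

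The crux is surjectivity of $\Phi$. Via the AIR bijection between $\mathsf{s\tau\text{-}tilt}\ A$ and the set of (isomorphism classes of) basic two-term silting complexes in $K^b(\mathsf{proj}\ A)$, this is equivalent to showing that the derived reduction functor $-\otimes_A^{\mathbb{L}}(A/I): K^b(\mathsf{proj}\ A)\to K^b(\mathsf{proj}(A/I))$ induces a bijection on basic two-term silting objects. Filtering by $I\supseteq I^2\supseteq \cdots \supseteq I^n=0$ reduces the problem to the infinitesimal case $I^2=0$, where the centrality of $I$ plays its decisive role: it makes $I$ into a symmetric $A$-bimodule whose left and right actions factor through $A/\mathsf{rad}\ A$, so the deformation-theoretic obstructions both to lifting a two-term silting complex $(P^{-1}\to P^0)$ from $A/I$ to $A$ and to preserving the silting identity $\mathsf{Hom}_{K^b}(T^{\bullet},T^{\bullet}[1])=0$ live in $\mathsf{Ext}$-groups that vanish on two-term silting objects. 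Existence and uniqueness of the lift follow.

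The main obstacle is this infinitesimal lifting step: centrality of $I$ is indispensable, because without it the ideal can deform nontrivially and the bijection may fail. Once $\Phi$ has been shown to be a bijection, the poset isomorphism drops out formally, since a minimal left $\mathsf{add}(N)$-approximation over $A/I$ pulls back to a minimal left $\mathsf{add}(\Phi(N))$-approximation over $A$ through the matching of projectives provided by $\pi$, so left mutation, the Hasse quiver and the partial order all transfer. A more elementary alternative that bypasses silting complexes entirely is to prove directly that $MI=0$ for every $\tau$-rigid $A$-module $M$: each central generator $z\in I$ yields an $A$-linear endomorphism $\varphi_z: M\to M$, $m\mapsto mz$, whose image lies in $M\cdot \mathsf{rad}\ A$, and combining this with the characterization of $\tau$-rigidity as surjectivity of $\mathsf{Hom}_A(f_0,M)\twoheadrightarrow \mathsf{Hom}_A(f_1,M)$ for a minimal projective presentation $P_1\xrightarrow{f_1}P_0\xrightarrow{f_0}M\to 0$ is expected to force $\varphi_z=0$.
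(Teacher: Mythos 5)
The paper does not prove this proposition at all: it is quoted directly from Eisele--Janssens--Raedschelders \cite[Theorem 1]{EJR18}, whose proof works at the level of two-term silting complexes in $K^b(\mathsf{proj}\ A)$ and shows that the reduction functor $-\otimes_A A/I$ induces a bijection on them. Your proposal gestures at that machinery, but it is anchored to the wrong underlying map, and this is a fatal error rather than a presentational one. Your $\Phi$ (restriction of scalars along $\pi$) is not well defined: take $A=\mathbb{F}[x]/(x^2)$ and $I=(x)$, which is generated by a central element of $\mathsf{rad}\ A$. The unique nonzero support $\tau$-tilting $(A/I)$-module is $\mathbb{F}$ itself, and viewed as an $A$-module it is the simple module $S$; but the minimal projective presentation $A\xrightarrow{\cdot x}A\to S\to 0$ gives $\tau_A S\simeq S$, so $\mathsf{Hom}_A(S,\tau_A S)\neq 0$ and $S$ is not $\tau$-rigid over $A$. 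The correct bijection sends a support $\tau$-tilting $A$-module $M$ to $M/MI\simeq M\otimes_AA/I$ (here $A\mapsto A/I=\mathbb{F}$), and its inverse genuinely changes the underlying module: it lifts the two-term presentation complex and takes $H^0$, which is in general a proper extension of the $(A/I)$-module one started with. For the same reason your claim that minimal projective presentations ``transport through $\pi$'' is false (the first syzygy over $A$ always contains $P_0I$, so $\tau_AM\neq\tau_{A/I}M$), and the ``automatic'' order-preservation and approximation-transfer arguments, which assume the modules on both sides are literally equal, collapse with it.

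Two further assertions you lean on are also false. The ``elementary alternative'' claims $MI=0$ for every $\tau$-rigid $A$-module $M$; but every projective module is $\tau$-rigid and $A\cdot I=I\neq 0$, so this fails already in the example above (the true statement of this flavour concerns bricks, where $\varphi_z\in\mathsf{rad}\,\mathsf{End}_A(M)=0$). And an ideal generated by central elements is in general neither a symmetric bimodule nor one whose $A$-actions factor through $A/\mathsf{rad}\ A$ (again $A=\mathbb{F}[x]/(x^3)$, $I=(x)$, $x\cdot x=x^2\neq 0$), so the asserted vanishing of ``deformation-theoretic obstructions'' has no support. The actual mechanism in \cite{EJR18} is concrete: right multiplication $\rho_z$ by a central $z$ is a module endomorphism of every projective commuting with all differentials, so a map $P_1\to Q_0I$ decomposes as $\sum\rho_{z_i}\circ g_i$ by projectivity, each $g_i$ is null-homotopic modulo $I$ because the reduced complex is presilting, and composing that homotopy with $\rho_{z_i}$ pushes the error into $Q_0I^2$; one then inducts on the nilpotency degree of $I$. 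I would recommend rebuilding the argument around the reduction functor on two-term silting complexes from the start, rather than around restriction of scalars on modules.
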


Next, we focus on the Schur algebra $S(n,r)$. We give two useful reduction theorems which will allow us to simplify the general problems to the cases with small $n$ and $r$. We point out that $S(n,r)$ with $n>r$ is always Morita equivalent to $S(r,r)$.

\begin{lemma}\label{N>n}
If $S(n,r)$ is $\tau$-tilting infinite, then so is $S(N,r)$ for any $N>n$.
\end{lemma}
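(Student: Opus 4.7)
The plan is to realize the basic algebra of $S(n,r)$ as an idempotent truncation of the basic algebra of $S(N,r)$, then invoke the reduction in Proposition \ref{quotient and idempotent}(2). The key point is the obvious set inclusion $\Omega(n,r)\subseteq \Omega(N,r)$ whenever $N>n$, since any partition of $r$ with at most $n$ parts has at most $N$ parts.

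First I would recall from the discussion after Theorem \ref{henke charac} that the basic algebra of $S(n,r)$ is Morita equivalent to $B_n:=\mathsf{End}_{\mathbb{F}G_r}\bigl(\bigoplus_{\lambda\in\Omega(n,r)}Y^\lambda\bigr)$, and likewise $S(N,r)$ has basic algebra $B_N:=\mathsf{End}_{\mathbb{F}G_r}\bigl(\bigoplus_{\lambda\in\Omega(N,r)}Y^\lambda\bigr)$. Since $\Omega(n,r)\subseteq \Omega(N,r)$ and the Young modules $Y^\lambda$ are pairwise non-isomorphic indecomposables, the module $Y_n:=\bigoplus_{\lambda\in\Omega(n,r)}Y^\lambda$ is a direct summand of $Y_N:=\bigoplus_{\lambda\in\Omega(N,r)}Y^\lambda$. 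Hence the projection onto $Y_n$ gives an idempotent $e\in B_N$ satisfying
\begin{equation*}
eB_Ne\;\simeq\;\mathsf{End}_{\mathbb{F}G_r}(Y_n)\;=\;B_n.
\end{equation*}

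Now I would argue the contrapositive. Suppose $S(N,r)$ is $\tau$-tilting finite; equivalently, its basic algebra $B_N$ is $\tau$-tilting finite (since $\tau$-tilting finiteness is a Morita invariant). By Proposition \ref{quotient and idempotent}(2), the idempotent truncation $eB_Ne\simeq B_n$ is then $\tau$-tilting finite, and hence so is $S(n,r)$. Contrapositively, if $S(n,r)$ is $\tau$-tilting infinite, then so is $S(N,r)$.

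There is no real obstacle here; the argument is a direct combination of the structural description of the basic algebra via Young modules and the idempotent-truncation reduction. The only subtlety to be careful about is distinguishing between $S(n,r)$ and its basic algebra, and using that $\tau$-tilting finiteness is Morita invariant so that working with basic algebras is harmless.
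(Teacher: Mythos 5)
Your proposal is correct and follows essentially the same route as the paper: both identify the basic algebra of $S(n,r)$, namely $\mathsf{End}_{\mathbb{F}G_r}\bigl(\bigoplus_{\lambda\in\Omega(n,r)}Y^\lambda\bigr)$, as the idempotent truncation $eB_Ne$ of the basic algebra of $S(N,r)$ via the projector onto the summand indexed by $\Omega(n,r)\subseteq\Omega(N,r)$, and then apply Proposition \ref{quotient and idempotent}(2). The only cosmetic difference is that you spell out the contrapositive and the Morita-invariance step, which the paper leaves implicit.
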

\begin{proof}
Let $S$ be the basic algebra of $S(N,r)$. For each $\lambda\in\Omega(n,r)$, we define $e_\lambda$ to be the projector to $Y^\lambda$ and take the sum $e:=\sum e_\lambda$ over all partitions in $\Omega(n,r)$. Then, the idempotent truncation is
\begin{center}
$eSe=e\mathsf{End}_{\mathbb{F}G_r}\left ( \underset{\lambda\in\Omega(N,r)}{\bigoplus} Y^\lambda \right )e=\mathsf{End}_{\mathbb{F}G_r}\left ( \underset{\lambda\in\Omega(n,r)}{\bigoplus} Y^\lambda \right )$.
\end{center}
This implies that the basic algebra of $S(n,r)$ is an idempotent truncation of $S(N,r)$ for any $N>n$. Thus, the statement follows  from Proposition \ref{quotient and idempotent}.
\end{proof}

We recall that the coordinate function $c_{ij}: \mathsf{GL}_n(\mathbb{F})\rightarrow \mathbb{F}$ is defined by $c_{ij}(g)=g_{ij}$ for all $g=[g_{ij}]\in \mathsf{GL}_n(\mathbb{F})$, where $i,j\in \left \{ 1,2,\dots,n \right \}$. Then, we denote by $A(n,r)$ the coalgebra generated by the homogeneous polynomials of total degree $r$ in $c_{ij}$. In fact, the Schur algebra $S(n,r)$ is just the dual of $A(n,r)$.

\begin{lemma}\label{n+r}
If $S(n,r)$ is $\tau$-tilting infinite, then so is $S(n,n+r)$.
\end{lemma}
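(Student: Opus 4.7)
The plan is to realize $S(n,r)$ as an algebra quotient of $S(n,n+r)$ and then invoke Proposition \ref{quotient and idempotent}(1) in its contrapositive form. The bridge between the two algebras is the determinant element of $A(n,n)$, which the preceding paragraph has set us up to use.

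First, I would fix
\[d := \sum_{\sigma \in G_n} \mathrm{sgn}(\sigma)\, c_{1\sigma(1)}\, c_{2\sigma(2)} \cdots c_{n\sigma(n)} \in A(n,n).\]
Since $A(n) = \bigoplus_{s \geq 0} A(n,s)$ is isomorphic to the polynomial algebra $\mathbb{F}[c_{ij}]$, it is an integral domain, so multiplication by $d$ defines an injective linear map $\mu_d : A(n,r) \hookrightarrow A(n,n+r)$ between finite-dimensional vector spaces.

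The second step is to upgrade $\mu_d$ to a morphism of coalgebras. Because the comultiplication on $A(n)$ is induced by the group multiplication on $\mathsf{GL}_n(\mathbb{F})$ and $\det$ is a character, $d$ is grouplike, i.e. $\Delta(d) = d \otimes d$. Writing $\Delta(f) = \sum f_{(1)} \otimes f_{(2)}$ in Sweedler notation, one gets
\[\Delta(d \cdot f) = (d \otimes d)\,\Delta(f) = \sum (d \cdot f_{(1)}) \otimes (d \cdot f_{(2)}),\]
so the image $d \cdot A(n,r)$ is a subcoalgebra of $A(n,n+r)$ and $\mu_d$ is a coalgebra isomorphism onto it.

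Dualizing this inclusion of finite-dimensional coalgebras yields a surjective algebra homomorphism
\[S(n,n+r) = A(n,n+r)^* \twoheadrightarrow (d \cdot A(n,r))^* \cong A(n,r)^* = S(n,r),\]
exhibiting $S(n,r)$ as a quotient of $S(n,n+r)$. Proposition \ref{quotient and idempotent}(1) then gives the desired contrapositive statement. The only substantive point requiring care is the grouplike identity for $d$ and the resulting compatibility of $\mu_d$ with the comultiplications; once that is in place, the dualization and the reduction step are formal.
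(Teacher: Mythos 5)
Your proof is correct and follows essentially the same route as the paper: both realize $S(n,r)$ as a quotient of $S(n,n+r)$ by means of the determinant and then invoke Proposition~\ref{quotient and idempotent}(1) in contrapositive form. The paper (following Erdmann) describes the kernel of that quotient explicitly as the idempotent ideal $S(n,n+r)eS(n,n+r)$ with $e=\sum_{\lambda_n=0}\xi_\lambda$, whereas you obtain the surjection by dualizing the grouplike-determinant embedding $d\cdot(-):A(n,r)\hookrightarrow A(n,n+r)$ of coalgebras; these are two descriptions of the same map, and your version usefully fills in the step the paper only indicates with ``by using $\mathsf{det}\,(c_{ij})$.''
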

\begin{proof}
It has been proved in \cite{Erdmann-finite} that $S(n,r)$ is a quotient of $S(n,n+r)$. For convenience, we recall the proof as follows. Let $I=I(n,n+r)$ be the set of maps
\begin{center}
$\alpha :\left \{ 1,2,\dots,n+r \right \}\rightarrow \left \{ 1,2,\dots,n \right \}$
\end{center}
with right $G_{n+r}$-action. Then, $S(n,n+r)$ has a basis $\left \{ \xi _{\alpha, \beta}\mid (\alpha, \beta)\in (I\times I)/G_{n+r} \right \}$: the dual basis of $c_{\alpha(1)\beta(1)}c_{\alpha(2)\beta(2)}\dots c_{\alpha(n+r)\beta(n+r)}\in A(n,n+r)$. Then, the elements $\xi _\alpha:=\xi _{\alpha, \alpha}$ form a set of orthogonal idempotents for $S(n,n+r)$ whose sum is the identity. Note that $\Omega(n, n+r)\subseteq I$ is the set of representatives of $G_{n+r}$-orbits. Let $e=\sum \xi_\lambda$ be the idempotent of $S(n,n+r)$, where the sum is taken over all $\lambda\in \Omega(n, n+r)$ such that $\lambda_n=0$. Then, by using $\mathsf{det}\ (c_{ij})$, we may prove
\begin{center}
$S(n, n+r)/S(n, n+r)eS(n, n+r)\simeq S(n,r)$.
\end{center}
Therefore, the statement follows from Proposition \ref{quotient and idempotent}.
\end{proof}

\subsection{Strategy on $\tau$-tilting infinite Schur algebras}
Let $A:=\mathbb{F}Q/I$ be an algebra presented by a quiver $Q$ and an admissible ideal $I$. We call $Q$ a \emph{$\tau$-tilting infinite quiver} if $A/\mathsf{rad}^2A$ is $\tau$-tilting infinite. For example, the Kronecker quiver $Q:\xymatrix@C=1cm{\circ\ar@<0.5ex>[r]^{\ }\ar@<-0.5ex>[r]_{\ }&\circ}$ is a $\tau$-tilting infinite quiver. Then, the following lemma provides us with three $\tau$-tilting infinite quivers.

\begin{lemma}\label{tau-tilting infinite quiver}
The following quivers $\mathsf{Q}_1, \mathsf{Q}_2$ and $\mathsf{Q}_3$ are $\tau$-tilting infinite quivers.
\begin{center}
$\mathsf{Q}_1: \xymatrix@C=1cm{ \circ \ar@<0.5ex>[d]^{}\ar@<0.5ex>[r]^{}&\circ  \ar@<0.5ex>[l]^{}\ar@<0.5ex>[d]^{}\\ \circ  \ar@<0.5ex>[u]^{}\ar@<0.5ex>[r]^{ }&\circ  \ar@<0.5ex>[u]^{ }\ar@<0.5ex>[l]^{}}$, $\mathsf{Q}_2: \xymatrix@C=1cm{ \circ \ar@<0.5ex>[dr]^{} & &\circ  \ar@<0.5ex>[dl]^{}\\ \circ \ar@<0.5ex>[r]^{} &\circ  \ar@<0.5ex>[ur]^{}\ar@<0.5ex>[ul]^{}\ar@<0.5ex>[l]^{}\ar@<0.5ex>[r]^{ }&\circ \ar@<0.5ex>[l]^{}}$, $\mathsf{Q}_3: \xymatrix@C=1cm{ \circ \ar@<0.5ex>[r]^{} &\circ \ar@<0.5ex>[d]^{}\ar@<0.5ex>[r]^{}\ar@<0.5ex>[l]^{}&\circ  \ar@<0.5ex>[l]^{}\\ \circ \ar@<0.5ex>[r]^{} &\circ  \ar@<0.5ex>[u]^{}\ar@<0.5ex>[l]^{}\ar@<0.5ex>[r]^{ }&\circ \ar@<0.5ex>[l]^{}}$.
\end{center}
\end{lemma}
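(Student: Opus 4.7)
The definition reduces the claim to showing that each radical square zero algebra $A_i := \mathbb{F}\mathsf{Q}_i/\mathsf{rad}^2\mathbb{F}\mathsf{Q}_i$ is $\tau$-tilting infinite. My plan is to apply Adachi's characterisation \cite{Ada-rad-square-0}: a radical square zero algebra $\mathbb{F}Q/\mathsf{rad}^2$ is $\tau$-tilting finite if and only if every connected component of the \emph{separated quiver} $Q^s$ is a Dynkin quiver of type $A$, $D$, or $E$. Recall that $Q^s$ is the bipartite quiver with vertex set $Q_0\sqcup Q_0'$ and an arrow $i\to j'$ for each arrow $i\to j$ of $Q$.

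The whole task then reduces to writing down $(\mathsf{Q}_i)^s$ and verifying that some component is not Dynkin. For $\mathsf{Q}_1$, labelling the vertices $1,2,3,4$ at the corners of the square, the eight vertices of $(\mathsf{Q}_1)^s$ split into two oriented 4-cycles $1\to 2'\to 4\to 3'\to 1$ and $2\to 1'\to 3\to 4'\to 2$, each with underlying graph $\widetilde{A}_3$. For $\mathsf{Q}_2$, with central vertex $c$ and four outer vertices, $(\mathsf{Q}_2)^s$ splits into the two disjoint 4-stars centred at $c'$ and at $c$, each isomorphic (as an undirected graph) to $\widetilde{D}_4$. For $\mathsf{Q}_3$, the two vertices of degree $3$ become the branch points, and $(\mathsf{Q}_3)^s$ breaks into two isomorphic trees on six vertices having exactly two adjacent degree-$3$ branch points with two pendant leaves at each, that is, the extended Dynkin diagram $\widetilde{D}_5$.

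Since $\widetilde{A}_3$, $\widetilde{D}_4$, and $\widetilde{D}_5$ are all affine Dynkin types and hence not Dynkin, Adachi's theorem immediately yields that $A_1$, $A_2$, $A_3$ are each $\tau$-tilting infinite, which is exactly what is needed. The only step that requires care is the bookkeeping for the separated quivers; once the bipartition is drawn, identifying the connected components with the relevant affine diagrams is essentially combinatorial and no serious obstacle. As an alternative route, one could avoid citing \cite{Ada-rad-square-0} by combining the fact that the path algebra of an extended Dynkin quiver is hereditary and $\tau$-tilting infinite with a transfer of $\tau$-rigidity from $\mathbb{F}Q^s$ to $\mathbb{F}Q/\mathsf{rad}^2$, but this is longer and less transparent than directly invoking Adachi's classification.
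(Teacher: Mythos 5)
Your proof is correct, but it takes a genuinely different route from the paper's own argument --- in fact it is essentially the alternative proof that the paper itself sketches in the remark immediately following Lemma \ref{tau-tilting infinite quiver}. The paper's proof instead selects, inside each $\mathsf{Q}_i$, an acyclic subquiver $\mathsf{Q}_i'$ (keeping one arrow from each opposite pair) whose underlying graph is the same affine diagram you extract from the separated quiver ($\widetilde{A}_3$, $\widetilde{D}_4$, $\widetilde{D}_5$ respectively); since $\mathsf{Q}_i'$ admits no paths of length two, its path algebra is hereditary of extended Dynkin type and hence $\tau$-tilting infinite by \cite{W-simply}, and it is a quotient of $\mathbb{F}\mathsf{Q}_i/\mathsf{rad}^2$, so Proposition \ref{quotient and idempotent}(1) concludes. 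What your approach buys is uniformity (one criterion settles all three quivers, and would settle any other candidate quiver mechanically); what the paper's approach buys is independence from the radical-square-zero classification, needing only the elementary facts that $\tau$-tilting finiteness passes to quotients and fails for representation-infinite hereditary algebras. Your separated-quiver computations are all correct, although the components of $(\mathsf{Q}_1)^s$ are $4$-cycles with alternating orientation rather than oriented cycles, since every arrow of a separated quiver goes from an unprimed to a primed vertex. One caveat: you state Adachi's criterion as ``every connected component of $Q^s$ is Dynkin,'' whereas the correct statement (as quoted in the paper) is that every \emph{single} subquiver of the separated quiver is a disjoint union of Dynkin quivers; the two versions differ when $Q^s$ has multiple arrows (for instance, the local algebra with two loops and radical square zero is $\tau$-tilting finite even though its separated quiver is the Kronecker quiver). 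Since the separated quivers of $\mathsf{Q}_1,\mathsf{Q}_2,\mathsf{Q}_3$ have no multiple arrows, they coincide with their maximal single subquivers and your application of the criterion remains valid, but the general statement should be corrected.
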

\begin{proof}
We look at the following subquivers.
\begin{center}
$\mathsf{Q}_1': \vcenter{\xymatrix@C=1cm{ \circ \ar[d]\ar[r]&\circ \\ \circ&\circ  \ar[u]\ar[l]}}$, $\mathsf{Q}_2': \vcenter{\xymatrix@C=1cm{ \circ \ar[dr]& &\circ  \ar[dl]\\ \circ \ar[r]&\circ&\circ \ar[l]}}$, $\mathsf{Q}_3': \vcenter{\xymatrix@C=1cm{ \circ \ar[r] &\circ &\circ  \ar[l]\\ \circ &\circ  \ar[u]\ar[l]\ar[r]&\circ }}$.
\end{center}
Since the path algebra of $\mathsf{Q}_i'$ for $i=1,2,3$ is a quotient algebra of $A=\mathbb{F}Q/I$ if $Q=\mathsf{Q}_i$, and all of these path algebras are $\tau$-tilting infinite as mentioned in \cite{W-simply}, we conclude that $A/\mathsf{rad}^2A$ is $\tau$-tilting infinite if $Q=\mathsf{Q}_1,\mathsf{Q}_2,\mathsf{Q}_3$ by Proposition \ref{quotient and idempotent}.
\end{proof}

We remark that Adachi \cite{Ada-rad-square-0} (and Aoki \cite{Aoki}) provided a handy criteria for the $\tau$-tilting finiteness of radical square zero algebras, that is, for any algebra $A$, the quotient $A/\mathsf{rad}^2 A$ is $\tau$-tilting finite if and only if every single subquiver of the separated quiver for $A/\mathsf{rad}^2 A$ is a disjoint union of Dynkin quivers. This also gives a proof of Lemma \ref{tau-tilting infinite quiver}.

We mention that in order to show that $S(n,r)$ is $\tau$-tilting infinite, it suffices to find a block algebra of $S(n,r)$ which is Morita equivalent to $\mathbb{F}Q/I$ with a $\tau$-tilting infinite subquiver in $Q$. Then, the advantage is that it is not necessary to find the explicit relations in $I$. As we mentioned in the introduction, except for the cases in $(\star)$, $S(n,r)$ is $\tau$-tilting infinite if and only if it contains one of $\mathsf{Q}_1, \mathsf{Q}_2$ and $\mathsf{Q}_3$ as a subquiver.

To find a $\tau$-tilting infinite subquiver in $S(2,r)$, it is worth mentioning Erdmann and Henke's method \cite{EH-method}. Let $\lambda=(\lambda_1,\lambda_2)$ and $\mu=(\mu_1,\mu_2)$ be two partitions of $r$, we define two non-negative integers $s:=\lambda_1-\lambda_2$ and $t:=\mu_1-\mu_2$. We denote by $v^s$ the vertex in the quiver of $S(2,r)$ corresponding to the Young module $Y^{(\lambda_1,\lambda_2)}$ with $s=\lambda_1-\lambda_2$. Let $n(v^s,v^t)$ be the number of arrows from $v^s$ to $v^t$. Then, it is shown in \cite[Theorem 3.1]{EH-method} that $n(v^s,v^t)=n(v^t,v^s)$ and $n(v^s,v^t)$ is either 0 or 1. We have the following recursive algorithm for computing $n(v^s,v^t)$.
\begin{lemma}{\rm(\cite[Proposition 3.1]{EH-method})}\label{EH-method}
Suppose that $p$ is a prime number and $s>t$. Let $s=s_0+ps'$ and $t=t_0+pt'$ with $0\leqslant s_0,t_0\leqslant p-1$ and $s',t'\geqslant 0$.
\begin{description}\setlength{\itemsep}{-3pt}
  \item[(1)] If $p=2$, then
\begin{center}
$n(v^s,v^t)=\left\{\begin{aligned}
&n(v^{s'},v^{t'}) &\text{ if }&s_0=t_0=1\ \text{or}\ s_0=t_0=0\ \text{and}\ s'\equiv t'\ \rm{mod}\ 2,\\
&1 &\text{if}\ &s_0=t_0=0,t'+1=s'\not\equiv 0\ \rm{mod}\ 2,\\
&0 &\text{ot}&\text{herwise.}
\end{aligned}\right.$
\end{center}
  \item[(2)] If $p>2$, then
\begin{center}
$n(v^s,v^t)=\left\{\begin{aligned}
&n(v^{s'},v^{t'}) &\text{ if }&s_0=t_0,\\
&1 &\text{if}\ &s_0+t_0=p-2,t'+1=s'\not\equiv 0\ {\rm{mod}}\ p,\\
&0 &\text{ot}&\text{herwise.}
\end{aligned}\right.$
\end{center}
\end{description}
\end{lemma}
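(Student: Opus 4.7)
The plan is to recognize $n(v^s, v^t)$ as $\dim_{\mathbb{F}} \mathrm{Ext}^1_{S(2,r)}(L(\lambda), L(\mu))$, where $L(\lambda), L(\mu)$ are the simple Schur modules attached to the two-part partitions $\lambda, \mu$ with $s = \lambda_1 - \lambda_2$ and $t = \mu_1 - \mu_2$, and to compute this Ext-group through the equivalence between $S(2,r)\text{-}\mathsf{mod}$ and polynomial $\mathsf{GL}_2$-modules of total degree $r$. After twisting by an appropriate power of the determinant, one obtains $\mathrm{Ext}^1_{S(2,r)}(L(\lambda), L(\mu)) \cong \mathrm{Ext}^1_{\mathsf{SL}_2}(L(s), L(t))$, which reduces the problem to a computation in rank-one algebraic-group cohomology.

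Writing $s = s_0 + ps'$ and $t = t_0 + pt'$ as in the statement, the Steinberg tensor product theorem gives $L(s) \cong L(s_0) \otimes L(s')^{[1]}$ and $L(t) \cong L(t_0) \otimes L(t')^{[1]}$, where $(-)^{[1]}$ denotes Frobenius twist. The next step is to feed these decompositions into the Lyndon--Hochschild--Serre spectral sequence for the first Frobenius kernel $\mathsf{SL}_{2,(1)} \triangleleft \mathsf{SL}_2$. Because the twisted factors $L(s')^{[1]}, L(t')^{[1]}$ are trivial as $\mathsf{SL}_{2,(1)}$-modules, the spectral sequence degenerates into a K\"unneth-type short exact sequence with two contributions: a ``low'' term $\mathrm{Hom}_{\mathsf{SL}_{2,(1)}}(L(s_0), L(t_0)) \otimes \mathrm{Ext}^1_{\mathsf{SL}_2}(L(s'), L(t'))$, which is non-zero exactly when $s_0 = t_0$ and produces the recursive value $n(v^{s'}, v^{t'})$; and a ``high'' term $\mathrm{Ext}^1_{\mathsf{SL}_{2,(1)}}(L(s_0), L(t_0)) \otimes \mathrm{Hom}_{\mathsf{SL}_2}(L(s'), L(t'))$, which will be responsible for the isolated ``$1$'' cases.

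The final step is the explicit evaluation of these infinitesimal Ext groups at the level of the restricted enveloping algebra $u(\mathfrak{sl}_2)$. For $p > 2$, the linkage principle for the principal block of $u(\mathfrak{sl}_2)$ yields $\dim \mathrm{Ext}^1_{\mathsf{SL}_{2,(1)}}(L(s_0), L(t_0)) = 1$ exactly when $s_0 + t_0 = p-2$; the Hom factor in the ``high'' term is one-dimensional precisely when the twisted highest weights are adjacent, i.e.\ $s' = t' + 1$, while the non-congruence $s' \not\equiv 0 \pmod p$ prevents the edge case from collapsing into a linkage class already counted by the recursive term. For $p = 2$ the same recipe applies, now with two restricted blocks indexed by $s_0 \in \{0,1\}$, but the Frobenius twist on the recursive branch $s_0 = t_0 = 0$ introduces an additional parity obstruction which is exactly the congruence $s' \equiv t' \pmod 2$ appearing in the formula. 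The main obstacle I anticipate is this $p = 2$ bookkeeping: the spectral-sequence differentials need not split semisimply in characteristic two, and matching the edge contributions to the stated formula requires appeal to the explicit Weyl-module cohomology computations of Cline--Parshall--Scott--van der Kallen; once those are imported, the three clauses of the lemma drop out mechanically from the case analysis on $(s_0, t_0)$.
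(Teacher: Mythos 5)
First, a point of comparison that matters for this review: the paper does not prove this lemma at all. It is imported verbatim from Erdmann--Henke \cite[Proposition 3.1]{EH-method} and used as a black box to read off quivers of $S(2,r)$, so there is no in-paper proof to measure your argument against; the relevant comparison is with the literature behind the citation. Your overall route --- identify $n(v^s,v^t)$ with $\dim\mathrm{Ext}^1_{S(2,r)}(L(\lambda),L(\mu))$, transfer to rational $\mathsf{SL}_2$-modules, then apply Steinberg's tensor product theorem and the Lyndon--Hochschild--Serre sequence for the first Frobenius kernel $G_1$ --- is indeed the standard way such recursions are obtained (Cline's computation of $\mathrm{Ext}^1$ for $\mathsf{SL}_2$ proceeds exactly so), and it is capable of producing all three clauses. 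Note, though, that already the transfer step silently uses Donkin's theorem that the polynomial category of degree $r$ sits inside rational $\mathsf{GL}_2$-modules with the same $\mathrm{Ext}$-groups, uniformly in $r$; without that the recursion, which compares quivers of Schur algebras of different degrees, does not even parse.

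As written, however, the argument has concrete gaps. (i) The claim that $\dim\mathrm{Ext}^1_{G_1}(L(s_0),L(t_0))=1$ exactly when $s_0+t_0=p-2$ is false: for $p>2$ and $s_0\neq t_0$ this group is $2$-dimensional, isomorphic to $L(1)^{[1]}$ as a module over $G/G_1\simeq\mathsf{SL}_2$. What is at most one-dimensional is the term $\mathrm{Hom}_{\mathsf{SL}_2}\bigl(L(s'),L(1)\otimes L(t')\bigr)$, and the side condition $t'+1=s'\not\equiv 0\ \mathrm{mod}\ p$ is precisely the condition for $L(t'+1)$ to split off $L(1)\otimes L(t')$; when $s'\equiv 0$ that tensor product is the indecomposable tilting module with head and socle $L(t'-1)$ and the Hom-space vanishes. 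Your sketch never makes this identification, so the isolated ``$1$'' is asserted rather than derived. (ii) The ``degeneration into a K\"unneth-type sequence'' is exactly the hard point: one must show the differential $E_2^{0,1}\to E_2^{2,0}$ vanishes, use injectivity of inflation to recover the full recursive term, and --- at $p=2$ --- compute $H^1(G_1,k)$ and $\mathrm{Ext}^1_{G_1}(L(1),L(1))$ explicitly, since $\mathrm{Ext}^1_{G_1}(L(0),L(0))=H^1(G_1,k)\neq 0$ in characteristic $2$ is the source of the extra clause there (the condition $s'\equiv t'\ \mathrm{mod}\ 2$ is merely the central-character constraint making $n(v^{s'},v^{t'})$ meaningful). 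You flag this as an obstacle but do not resolve it, and Cline--Parshall--Scott--van der Kallen is not the right tool for these infinitesimal $\mathrm{Ext}$-groups. In short: a correct plan along the expected lines, but not yet a proof; for the purposes of this paper the appropriate move is the one the author makes, namely to cite \cite{EH-method}.
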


\section{Representation-finite and tame Schur algebras}\label{section-finite-tame}
In this section, we show that all tame Schur algebras are $\tau$-tilting finite. We first recall the complete classification of the representation type of Schur algebras. Note that some semi-simple cases are contained in the representation-finite cases. We may distinguish the semi-simple cases following \cite{DN-semisimple}. Namely, the Schur algebra $S(n,r)$ is semi-simple if and only if $p=0$ or $p>r$ or $p=2, n=2, r=3$.

\begin{proposition}{\rm{(\cite{Erdmann-finite, DEMN-tame schur}, Modified\footnote{The Schur algebra $S(2,11)$ over $p=2$ was proved to be wild in \cite{DEMN-tame schur}, but this is an error caused by miscalculating the number of simples in each of the blocks of $S(2,11)$. After correcting the number, $S(2,11)$ becomes tame because we can apply \cite[Theorem 13]{EH-two-blocks} to show that $S(2,9)$ and $S(2,11)$ have the same representation type. Thanks to Prof. Erdmann for explaining this.})}}\label{summary}
Let $p$ be the characteristic of $\mathbb{F}$. Then, the Schur algebra $S(n,r)$ is representation-finite if and only if $p=2, n=2, r=5,7$ or $p\geqslant 2, n=2, r<p^2$ or $p\geqslant 2, n\geqslant 3, r<2p$; tame if and only if $p=2, n=2, r=4,9,11$ or $p=3, n=2, r=9, 10, 11$ or $p=3, n=3, r=7,8$. Otherwise, $S(n,r)$ is wild.
\end{proposition}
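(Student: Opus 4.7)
The proposition is essentially a compilation of existing results, so the "proof" amounts to assembling them, with one correction that needs genuine verification. The plan is as follows.

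First, I would invoke Erdmann's complete classification in \cite{Erdmann-finite} to cover the representation-finite list, together with the observation that $S(n,r)$ is semisimple (hence trivially representation-finite) exactly when $p=0$, $p>r$, or $(p,n,r)=(2,2,3)$, matching the list in \cite{DN-semisimple}. This handles the first half of the statement. Next, I would invoke \cite[Section 5]{DEMN-tame schur} for the tame classification, with the sole caveat that the case $(p,n,r)=(2,2,11)$ was there declared wild. So the only genuinely new task is to correct the status of $S(2,11)$ from wild to tame; the wildness of the remaining listed wild algebras is already established in \cite{Erdmann-finite} and \cite{DEMN-tame schur}.

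To deal with $S(2,11)$ over $p=2$, I would first recompute its block structure. Using the $2$-core classification of partitions of $11$ and the constraint that partitions must have at most $2$ parts, one finds that the partitions $\Omega(2,11)$ split into exactly two blocks of $S(2,11)$: the principal block containing $\{(11),(9,2),(7,4)\}$ and the block of $2$-core $(2,1)$ containing $\{(10,1),(8,3),(6,5)\}$ (cf. Example \ref{s(2,11)}). Each of these blocks has three simple modules, which is the correction to the count that was miscalculated in \cite{DEMN-tame schur}.

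With the correct block structure in hand, I would apply \cite[Theorem 13]{EH-two-blocks} (the Erdmann–Henke comparison theorem for blocks of $S(2,r)$ determined by the $2$-adic data of the highest weights) to conclude that the nontrivial blocks of $S(2,11)$ are Morita equivalent (or at least have the same representation type as) the corresponding blocks of $S(2,9)$. Since $S(2,9)$ is tame by \cite[Section 5]{DEMN-tame schur}, this transfers tameness to $S(2,11)$. Finally, tameness (as opposed to representation-finiteness) for $S(2,11)$ follows because $S(2,9)$ is not representation-finite, so neither is $S(2,11)$, placing it squarely in the tame column of the classification.

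The main obstacle in this plan is not any single computation but rather carefully verifying the hypothesis needed to invoke \cite[Theorem 13]{EH-two-blocks}: one must match up the partitions in each block of $S(2,11)$ with those of $S(2,9)$ via their $2$-adic expansions and confirm the two-block structure agrees. Once that matching is in place, the comparison theorem does all the work and the rest of the proposition is just a citation of \cite{Erdmann-finite} and \cite{DEMN-tame schur}.
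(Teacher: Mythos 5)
Your proposal follows essentially the same route as the paper: Proposition \ref{summary} is recorded there without a formal proof, the representation-finite and wild parts being direct citations of \cite{Erdmann-finite} and \cite{DEMN-tame schur}, and the only genuinely new content being the footnote's correction of $S(2,11)$ over $p=2$ by comparing it with $S(2,9)$ via \cite[Theorem 13]{EH-two-blocks} --- exactly your plan. One slip worth fixing: the partitions in $\Omega(2,11)$ do fall into two $2$-blocks of $\mathbb{F}G_{11}$ with three partitions each, but as an algebra $S(2,11)$ has \emph{three} blocks, since the summand $S_{B_1}$ attached to the principal block decomposes further as $\mathcal{A}_2\oplus\mathbb{F}$ (cf.\ Example \ref{s(2,11)}), so only the $2$-core-$(2,1)$ block has three simples; as \cite[Theorem 13]{EH-two-blocks} is a statement about blocks of the Schur algebra, it is that block alone which gets matched with the $\mathcal{D}_3$ block of $S(2,9)$, the remaining blocks of $S(2,11)$ being representation-finite with at most two simples. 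This imprecision does not affect the conclusion that $S(2,11)$ is tame, but since the whole point of the correction is an accurate count of simples per block, it should be stated correctly.
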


\subsection{Representation-finite blocks}\label{subsection3.1}
Erdmann \cite[Proposition 4.1]{Erdmann-finite} showed that each block $A$ of a representation-finite Schur algebra $S(n,r)$ is Morita equivalent to $\mathcal{A}_m:=\mathbb{F}Q/I$ for some $m\in \mathbb{N}$, which is defined by the following quiver with relations.
\begin{equation}
\begin{aligned}
\ &\ \ \ Q: \xymatrix@C=1cm@R=0.3cm{1\ar@<0.5ex>[r]^{\alpha_1}&2\ar@<0.5ex>[l]^{\beta_1}\ar@<0.5ex>[r]^{\alpha_2}&\cdots \ar@<0.5ex>[l]^{\beta_2}\ar@<0.5ex>[r]^{\alpha_{m-2}}&m-1\ar@<0.5ex>[l]^{\beta_{m-2}}\ar@<0.5ex>[r]^{\ \ \alpha_{m-1}}&m\ar@<0.5ex>[l]^{\ \ \beta_{m-1}}}, \\ I:& \left \langle \alpha_1\beta_1,\alpha_i\alpha_{i+1},\beta_{i+1}\beta_i, \beta_i\alpha_i-\alpha_{i+1}\beta_{i+1} \mid 1\leqslant i\leqslant m-2 \right \rangle.
\end{aligned}
\end{equation}
Three years later after \cite{Erdmann-finite}, Donkin and Reiten \cite[Theorem 2.1]{DR-finite blocks} generalized this result to an arbitrary Schur algebra $S(n,r)$, that is, each representation-finite block of Schur algebras is Morita equivalent to $\mathcal{A}_m$ for some $m\in \mathbb{N}$.

We would like to determine the number of pairwise non-isomorphic basic support $\tau$-tilting modules for a representation-finite block of Schur algebras.
\begin{theorem}\label{result-finite}
Let $\mathcal{A}_m$ be the algebra defined above. Then, $\#\mathsf{s\tau\text{-}tilt}\ \mathcal{A}_m=\binom{2m}{m}$.
\end{theorem}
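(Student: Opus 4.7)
The plan is to prove the formula by induction on $m$, using $\tau$-tilting reduction to handle two large disjoint classes of support $\tau$-tilting modules, and then addressing the remaining cases by an explicit combinatorial count. The base case $m=1$ is immediate: $\mathcal{A}_1 = \mathbb{F}$ has exactly the two support $\tau$-tilting modules $\mathbb{F}$ and $0$, matching $\binom{2}{1}=2$.

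The key structural observation is that $\mathcal{A}_m/\mathcal{A}_m e_1 \mathcal{A}_m \cong \mathcal{A}_{m-1}$: setting the idempotent $e_1$ to zero kills $\alpha_1$ and $\beta_1$, and the commutativity relation $\beta_1\alpha_1 = \alpha_2\beta_2$ collapses to $\alpha_2\beta_2 = 0$, which, after reindexing vertices, becomes the defining relation $\alpha'_1\beta'_1 = 0$ of $\mathcal{A}_{m-1}$. By Proposition \ref{quotient and idempotent}, support $\tau$-tilting $\mathcal{A}_m$-modules whose support omits vertex $1$ correspond bijectively with $\mathsf{s\tau\text{-}tilt}\ \mathcal{A}_{m-1}$, contributing $\binom{2m-2}{m-1}$ modules by induction. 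A standard $\tau$-tilting reduction at the indecomposable projective summand $P_1$ (whose Bongartz completion is the whole algebra $\mathcal{A}_m$) yields a second, disjoint bijection between support $\tau$-tilting $\mathcal{A}_m$-modules containing $P_1$ as a direct summand and $\mathsf{s\tau\text{-}tilt}\ \mathcal{A}_{m-1}$, contributing another $\binom{2m-2}{m-1}$ modules. The two classes are disjoint since the latter forces $S_1$ into the support while the former excludes it.

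The remaining support $\tau$-tilting modules are those whose support contains vertex $1$ but which do not have $P_1$ as a summand, and their number must equal $\binom{2m}{m} - 2\binom{2m-2}{m-1} = 2\binom{2m-2}{m-2}$. The main obstacle is precisely this last count: because $2\binom{2m-2}{m-2}$ does not itself equal $\#\mathsf{s\tau\text{-}tilt}\ \mathcal{A}_k$ for any $k$, a purely single-parameter recursion stalls here. The algebra $\mathcal{A}_m$ is special biserial (the only non-monomial relations are the commutativities $\beta_i\alpha_i = \alpha_{i+1}\beta_{i+1}$), so the indecomposable $\tau$-rigid modules are describable as admissible string modules on the double quiver, and the ``middle'' modules can in principle be enumerated by listing the maximal $\tau$-rigid collections in which some indecomposable with $S_1$ in its composition series—but not $P_1$ itself—appears. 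Alternatively, and probably more cleanly, one bypasses the recursion by constructing a direct combinatorial bijection between $\mathsf{s\tau\text{-}tilt}\ \mathcal{A}_m$ and a natural set of size $\binom{2m}{m}$ (for instance, the $m$-subsets of $\{1,\dots,2m\}$, or monotone lattice paths from $(0,0)$ to $(m,m)$), realising each support $\tau$-tilting module via its $g$-vectors as a maximal cone in an identifiable fan in $\mathbb{R}^m$ whose combinatorics are known to produce $\binom{2m}{m}$ top-dimensional chambers.
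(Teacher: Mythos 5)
Your reduction of the first two classes is sound: killing the vertex $1$ gives $\mathcal{A}_m/\mathcal{A}_me_1\mathcal{A}_m\cong\mathcal{A}_{m-1}$, and Jasso's $\tau$-tilting reduction at the projective summand $P_1$ (whose Bongartz completion is indeed $\mathcal{A}_m$) identifies the support $\tau$-tilting modules containing $P_1$ with $\mathsf{s\tau\text{-}tilt}\ (\mathcal{A}_m/\mathcal{A}_me_1\mathcal{A}_m)\cong\mathsf{s\tau\text{-}tilt}\ \mathcal{A}_{m-1}$; the two classes are disjoint and the arithmetic $\binom{2m}{m}=2\binom{2m-2}{m-1}+2\binom{2m-2}{m-2}$ is correct. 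But the argument is not a proof, because the entire content of the theorem now sits in the third class --- the support $\tau$-tilting modules whose support contains vertex $1$ but which do not contain $P_1$ --- and you never count it. You acknowledge this yourself ("the recursion stalls here") and then offer two unexecuted sketches: an enumeration of string-module collections that is not carried out, and a hoped-for bijection with lattice paths or with the chambers of "an identifiable fan... whose combinatorics are known to produce $\binom{2m}{m}$ top-dimensional chambers," which is precisely the statement to be proved, not a tool for proving it. As written, the proposal establishes only the inequality-free identity for the two outer classes and leaves the middle count, hence the theorem, open.

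For comparison, the paper's proof avoids any recursion or case analysis. It observes that $\mathcal{A}_m$ is the quotient of the Brauer line algebra $\Lambda_m$ (Brauer tree a straight line with $m+1$ vertices, no exceptional vertex) by the two-sided ideal generated by $\alpha_1\beta_1$, which is a central element of $\Lambda_m$ lying in the radical. Proposition \ref{center} (the reduction of Eisele--Janssens--Raedschelders \cite{EJR18}) then gives a poset isomorphism $\mathsf{s\tau\text{-}tilt}\ \mathcal{A}_m\simeq\mathsf{s\tau\text{-}tilt}\ \Lambda_m$, and Aoki \cite{Aoki} has already computed $\#\mathsf{s\tau\text{-}tilt}\ \Lambda_m=\binom{2m}{m}$. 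If you want to salvage your inductive approach, the missing step is exactly a count of the support $\tau$-tilting modules supported at vertex $1$ but without $P_1$ as a summand; note that the analogous decomposition for $\Lambda_m$ (where the answer is known) could in principle be transported back through the central-element reduction, but at that point you are already using the paper's argument.
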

\begin{proof}
Let $\Lambda_m$ be the Brauer tree algebra whose Brauer tree is a straight line having $m+1$ vertices and without exceptional vertex. Then, it is easy to check that $\mathcal{A}_m$ is a quotient algebra of $\Lambda_m$ modulo the two-sided ideal generated by $\alpha_1\beta_1$. Since $\alpha_1\beta_1$ is a central element of $\Lambda_m$ and $\#\mathsf{s\tau\text{-}tilt}\ \Lambda_m=\binom{2m}{m}$ has been determined in \cite[Theorem 5.6]{Aoki}, we get the statement following Proposition \ref{center}.
\end{proof}

\subsection{Tame Schur algebras}\label{subsec-3.2}
The block algebras of tame Schur algebras are well-studied in \cite{Erdmann-finite} and \cite{DEMN-tame schur}. In this subsection, we recall these constructions and show that tame Schur algebras are $\tau$-tilting finite. We recall the following bound quiver algebras constructed in \cite{Erdmann-finite}, where the tameness for them is given in \cite[5.5, 5.6, 5.7]{DEMN-tame schur}.
\begin{itemize}\setlength{\itemsep}{-3pt}
\item Let $\mathcal{D}_3:=\mathbb{F}Q/I$ be the special biserial algebra given by
\begin{equation}
Q:\xymatrix@C=1cm{1\ar@<0.5ex>[r]^{\alpha_1}&2\ar@<0.5ex>[l]^{\beta_1}\ar@<0.5ex>[r]^{\alpha_2}&3\ar@<0.5ex>[l]^{\beta_2}}\ \text{and}\ I: \left \langle \begin{matrix}
\alpha_1\beta_1,\beta_2\alpha_2, \alpha_1\alpha_2\beta_2,\alpha_2\beta_2\beta_1
\end{matrix}\right \rangle.
\end{equation}

\item Let $\mathcal{D}_4:=\mathbb{F}Q/I$ be the bound quiver algebra given by
\begin{equation}
Q: \xymatrix@C=1cm@R=0.8cm{1\ar@<0.5ex>[r]^{\alpha_1}&3\ar@<0.5ex>[l]^{\beta_1}\ar@<0.5ex>[d]^{\beta_2}\ar@<0.5ex>[r]^{\beta_3}&4\ar@<0.5ex>[l]^{\alpha_3}\\
&2\ar@<0.5ex>[u]^{\alpha_2}&}\ \text{and}\ I:  \left \langle \begin{matrix}
\alpha_1\beta_1,\alpha_2\beta_2,\alpha_3\beta_1,\alpha_3\beta_2,\alpha_1\beta_3,\alpha_2\beta_3,\\
\alpha_1\beta_2\alpha_2,\beta_2\alpha_2\beta_1,\beta_2\alpha_2-\beta_3\alpha_3
\end{matrix}\right \rangle.
\end{equation}

\item Let $\mathcal{R}_4:=\mathbb{F}Q/I$ be the bound quiver algebra given by
\begin{equation}
Q:\xymatrix@C=1cm{\circ\ar@<0.5ex>[r]^{\alpha_1}&\circ\ar@<0.5ex>[l]^{\beta_1}\ar@<0.5ex>[r]^{\alpha_2}&\circ\ar@<0.5ex>[l]^{\beta_2}\ar@<0.5ex>[r]^{\alpha_3}&\circ\ar@<0.5ex>[l]^{\beta_3}}\ \text{and}\ I: \left \langle\begin{matrix}
\alpha_1\beta_1,\alpha_1\alpha_2, \beta_2\beta_1,\\
\alpha_2\beta_2-\beta_1\alpha_1, \alpha_3\beta_3-\beta_2\alpha_2
\end{matrix}\right \rangle.
\end{equation}

\item Let $\mathcal{H}_4:=\mathbb{F}Q/I$ be the bound quiver algebra given by
\begin{equation}
Q: \xymatrix@C=1cm@R=0.8cm{\circ\ar@<0.5ex>[r]^{\alpha_1}&\circ\ar@<0.5ex>[l]^{\beta_1}\ar@<0.5ex>[d]^{\beta_2}\ar@<0.5ex>[r]^{\alpha_3}&\circ\ar@<0.5ex>[l]^{\beta_3}\\
&\circ\ar@<0.5ex>[u]^{\alpha_2}&}\ \text{and}\ I:  \left \langle \begin{matrix}
\alpha_1\beta_1,\alpha_1\beta_2,\alpha_2\beta_1,\alpha_2\beta_2,\alpha_1\alpha_3,\\
\beta_3\beta_1, \alpha_3\beta_3-\beta_1\alpha_1-\beta_2\alpha_2
\end{matrix}\right \rangle.
\end{equation}
\end{itemize}
We remark that $\mathcal{D}_3$, $\mathcal{D}_4$, $\mathcal{R}_4$ and $\mathcal{H}_4$ are also tame blocks of some wild Schur algebras.

\begin{lemma}\label{method of D_4}
The tame algebras $\mathcal{D}_3$, $\mathcal{D}_4$, $\mathcal{R}_4$ and $\mathcal{H}_4$ are $\tau$-tilting finite. Moreover,
\begin{center}
\renewcommand\arraystretch{1.2}
\begin{tabular}{|c|c|c|c|c|c|}
\hline
$A$&$\mathcal{D}_3$&$\mathcal{D}_4$&$\mathcal{R}_4$&$\mathcal{H}_4$\\ \hline
$\#\mathsf{s\tau\text{-}tilt}\ A$&28&114&88&96 \\ \hline
\end{tabular}.
\end{center}
\end{lemma}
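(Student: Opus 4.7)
The plan is uniform across the four algebras. For each one, I propose to build the Hasse quiver $\mathcal{H}(\mathsf{s\tau\text{-}tilt}\ A)$ explicitly, starting at the trivial $\tau$-tilting module $A$ itself and iteratively applying left mutation as in Definition-Theorem \ref{def-left-mutation}. By \cite[Corollary 2.38]{AIR} (recalled in the proposition immediately preceding Proposition \ref{oppsite algebra}), once this process closes up into a finite subgraph it exhausts $\mathsf{s\tau\text{-}tilt}\ A$, so finiteness and the announced count are obtained simultaneously.

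Concretely, the first task for each algebra is to read off the composition series of the indecomposable projectives from the quiver with relations, using the notation for modules introduced at the start of Section \ref{section2}. Then, starting from $A = P_1 \oplus \cdots \oplus P_n$, I mutate each indecomposable summand $M \notin \mathsf{Fac}(N)$ by computing the minimal left $\mathsf{add}(N)$-approximation $f \colon M \to N'$ and replacing $M$ with $\mathsf{coker}\ f$; the support part of the output pair appears when $\mathsf{coker}\ f = 0$. At every stage $\tau$-rigidity is verified by computing a minimal projective presentation and applying the Nakayama functor recalled before Definition \ref{def-tau-tilting}. For $\mathcal{D}_3$, which is special biserial, the $\tau$-tilting finiteness is already a consequence of \cite{Mousavand-biserial alg}, so only the count $28$ must be extracted from the Hasse diagram.

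Two tools should substantially cut down the workload. First, Proposition \ref{center} allows quotienting by two-sided ideals generated by central elements of $\mathsf{rad}\ A$ without changing $\mathsf{s\tau\text{-}tilt}\ A$: the relation $\beta_2\alpha_2 = \beta_3\alpha_3$ in $\mathcal{D}_4$, together with the commutation relations $\alpha_i\beta_i = \beta_{i-1}\alpha_{i-1}$ in $\mathcal{R}_4$ and $\mathcal{H}_4$, are natural candidates for such central reductions, collapsing each algebra onto a simpler quotient on which $\mathsf{s\tau\text{-}tilt}$ is easier to enumerate. Second, Proposition \ref{oppsite algebra} gives a poset anti-isomorphism $\mathsf{s\tau\text{-}tilt}\ A \leftrightarrow \mathsf{s\tau\text{-}tilt}\ A^{\mathsf{op}}$; since $\mathcal{D}_3, \mathcal{R}_4$ and $\mathcal{H}_4$ each admit an obvious anti-automorphism swapping $\alpha_i \leftrightarrow \beta_i$, their Hasse quivers are left-right symmetric and it suffices to enumerate the upper half.

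The main obstacle is the sheer bookkeeping: with between $28$ and $114$ vertices to enumerate, and each requiring a fresh left approximation, it is easy to declare a spurious mutation by overlooking that a summand already lies in $\mathsf{Fac}(N)$, or to miscount a cokernel that turns out to be decomposable. I plan to guard against this by closing the loop through Proposition \ref{tau-tilting-finite-rigid}: after enumerating $\mathsf{\tau\text{-}rigid}\ A$, I verify that every maximal $\tau$-rigid subset has cardinality $|A|$ and that the number of such maximal subsets, added to the support $\tau$-tilting pairs arising from the mutations that output a nonzero projective component, reproduces the claimed $\#\mathsf{s\tau\text{-}tilt}\ A$.
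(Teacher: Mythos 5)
Your overall strategy---reduce by central elements via Proposition \ref{center}, then enumerate $\mathsf{s\tau\text{-}tilt}\ A$ by iterated left mutation until the Hasse quiver closes up, invoking \cite[Corollary 2.38]{AIR}---is exactly the engine of the paper's proof, and for $\mathcal{D}_3$ the paper does precisely what you describe. Two remarks on where the paper diverges and where your write-up slips. First, for $\mathcal{D}_4$ and $\mathcal{H}_4$ the paper does not mutate blindly through all $114$ (resp.\ $96$) vertices: it stratifies $\#\mathsf{s\tau\text{-}tilt}$ by support rank, writing $\#\mathsf{s\tau\text{-}tilt}\ \widetilde{\mathcal{D}_4}=\sum_s a_s(\widetilde{\mathcal{D}_4})$, and computes each $a_s$ with $s<|A|$ as the number of $\tau$-tilting modules over the quotients $\widetilde{\mathcal{D}_4}/\langle 1-\sum e_i\rangle$, which are small two- and three-vertex algebras already handled (e.g.\ $\widetilde{\mathcal{D}_4}/\langle e_4\rangle\simeq\widetilde{\mathcal{D}_3}$ contributes $\mathsf{d}_4=17$); only the sincere $\tau$-tilting modules are found by direct mutation. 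This reuse is what makes the count $114$ humanly checkable, and for $\mathcal{R}_4$ the paper instead observes that the central quotient is a representation-finite string algebra and delegates to the String Applet. Second, a conceptual slip: you propose the \emph{relations} $\beta_2\alpha_2-\beta_3\alpha_3$ (in $\mathcal{D}_4$) and $\alpha_i\beta_i-\beta_{i-1}\alpha_{i-1}$ (in $\mathcal{R}_4$, $\mathcal{H}_4$) as ``candidates for central reductions,'' but these elements already vanish in the algebra, so quotienting by them does nothing. The elements one actually needs are nonzero central elements of the radical, e.g.\ $\alpha_2\beta_2$ and $\beta_2\beta_1\alpha_1\alpha_2$ for $\mathcal{D}_3$, or $\beta_1\alpha_1$ and $\beta_2\alpha_2+\beta_3\alpha_3$ for $\mathcal{R}_4$ (the paper finds them with GAP); without identifying these correctly your reduction step stalls. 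Your symmetry observation via Proposition \ref{oppsite algebra} and the closing consistency check through Proposition \ref{tau-tilting-finite-rigid} are sound but auxiliary; neither replaces the need to carry out the mutation computation correctly.
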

\begin{proof}
We often use Proposition \ref{center} to reduce the direct calculation of left mutations.

(1) Since $\alpha_2\beta_2$ and $\beta_2\beta_1\alpha_1\alpha_2$ are non-trivial central elements\footnote[1]{We use \emph{GAP: a System for Computational Discrete Algebra} to compute the central elements.} of $\mathcal{D}_3$, we may define
\begin{center}
$\widetilde{\mathcal{D}_3}:=\mathcal{D}_3/<\alpha_2\beta_2,\beta_2\beta_1\alpha_1\alpha_2>$
\end{center}
so that $\#\mathsf{s\tau\text{-}tilt}\ \widetilde{\mathcal{D}_3}=\#\mathsf{s\tau\text{-}tilt}\ \mathcal{D}_3$. Then, we determine the number $\#\mathsf{s\tau\text{-}tilt}\ \widetilde{\mathcal{D}_3}$ by calculating the left mutations starting with $\widetilde{\mathcal{D}_3}$. In fact, this is equivalent to finding the Hasse quiver $\mathcal{H}(\mathsf{s\tau\text{-}tilt}\ \widetilde{\mathcal{D}_3})$. We recall that the indecomposable projective $\widetilde{\mathcal{D}_3}$-modules are
\begin{center}
$P_1=\begin{smallmatrix}
1\\
2\\
3
\end{smallmatrix},
P_2=\begin{smallmatrix}
&2&\\
1& &3\\
2\\
3
\end{smallmatrix},
P_3=\begin{smallmatrix}
3\\
2\\
1\\
2
\end{smallmatrix}$.
\end{center}
Starting with the unique maximal $\tau$-tilting module $\widetilde{\mathcal{D}_3}\simeq P_1\oplus P_2\oplus P_3$, we take an exact sequence with a minimal left $\mathsf{add}(P_2\oplus P_3)$-approximation $f_1$ of $P_1$:
\begin{center}
$\substack{1\\2\\3}\overset{f_1}{\longrightarrow}\substack{\ \\1\\2\\3}\substack{2\\\ \\\ \\ \  }\substack{\ \\3\\\ \\ \ }\longrightarrow \mathsf{coker}\ f_1\longrightarrow 0$.
\end{center}
Then, $\mathsf{coker}\ f_1=\substack{2\\3}$ and $\mu_{P_1}^-(\widetilde{\mathcal{D}_3})= \substack{2\\3}\oplus P_2\oplus P_3$. Next, we take an exact sequence with a minimal left $\mathsf{add}(\substack{2\\3}\oplus P_3)$-approximation $f_2$ of $P_2$:
\begin{center}
$P_2\overset{f_2}{\longrightarrow}\substack{2\\3}\oplus P_3\longrightarrow \mathsf{coker}\ f_2\longrightarrow 0$,
\end{center}
so that $\mathsf{coker}\ f_2=\substack{3\\2}$ and $\mu_{P_2}^-(\mu_{P_1}^-(\widetilde{\mathcal{D}_3}))= \substack{2\\3}\oplus \substack{3\\2}\oplus P_3$. Similarly, $\mu_{P_3}^-(\mu_{P_2}^-(\mu_{P_1}^-(\widetilde{\mathcal{D}_3})))= \substack{2\\3}\oplus \substack{3\\2}$. Then, by the calculation in Example \ref{example-simplest}, we have
\begin{center}
\begin{tikzpicture}[shorten >=1pt, auto, node distance=0cm,
   node_style/.style={font=},
   edge_style/.style={draw=black}]
\node[node_style] (v) at (0,0) {$\substack{2\\3}\oplus \substack{3\\2}$};
\node[node_style] (v1) at (2.5,0.7) {$\substack{3}\oplus \substack{3\\2}$};
\node[node_style] (v12) at (5,0.7) {$\substack{3}$};
\node[node_style] (v2) at (2.5,-0.7) {$\substack{2\\3}\oplus \substack{2}$};
\node[node_style] (v21) at (5,-0.7) {$\substack{2}$};
\node[node_style] (v0) at (7,0) {$\substack{0}$};
\draw[->]  (v) edge node{}(v1);
\draw[->]  (v1) edge node{\ }(v12);
\draw[->]  (v12) edge node{\ }(v0);
\draw[->]  (v) edge node{}(v2);
\draw[->]  (v2) edge node{}(v21);
\draw[->]  (v21) edge node{}(v0);
\end{tikzpicture}.
\end{center}

In this way, one can calculate all possible left mutation sequences starting with $\widetilde{\mathcal{D}_3}$ and ending at $0$, so that the Hasse quiver $\mathcal{H}(\mathsf{s\tau\text{-}tilt}\ \widetilde{\mathcal{D}_3})$ is as follows.
\begin{center}
$\vcenter{\xymatrix@C=1.2cm@R=0.6cm{&\circ\ar[dr]\ar[r]&\circ\ar[rr]\ar[drr]&&\circ\ar[r]&\circ\ar[dddr]\ar[r]&\circ\ar[dddr]&\\
&&\circ\ar[r]\ar[dr]&\circ\ar[ur]\ar[dr]&\circ\ar[dr]& &&\\
\widetilde{\mathcal{D}_3}\ar[r]\ar[uur]\ar[dddr]&\circ\ar[r]\ar[dddr]&\circ\ar[dr]\ar[urr]&\circ\ar[r]&\circ\ar[dr]&\circ\ar[uur]&&\\
&&\circ\ar[dr]\ar[ur]&\circ\ar[r]\ar[dr]&\circ\ar[dr]\ar[ur]&\circ\ar[r]&\circ\ar[r]&0\\
&&&\circ\ar[drr]\ar[urr]&\circ\ar[r]&\circ\ar[dr]&&\\
&\circ\ar[uur]\ar[r]&\circ\ar[r]&\circ\ar[rr]\ar[ur]&&\circ\ar[r]&\circ\ar[uur]&}}$.
\end{center}
Hence, we deduce that $\mathcal{D}_3$ is $\tau$-tilting finite and $\#\mathsf{s\tau\text{-}tilt}\ \mathcal{D}_3=28$.

(2) Since $\beta_1\alpha_1$ and $\beta_2\alpha_2+\beta_3\alpha_3$ are non-trivial central elements of $\mathcal{R}_4$, we define
\begin{center}
$\widetilde{\mathcal{R}_4}:=\mathcal{R}_4/<\beta_1\alpha_1,\beta_2\alpha_2+\beta_3\alpha_3>$
\end{center}
and then, $\#\mathsf{s\tau\text{-}tilt}\ \widetilde{\mathcal{R}_4}=\#\mathsf{s\tau\text{-}tilt}\ \mathcal{R}_4$. Instead of direct calculation, we point out that $\widetilde{\mathcal{R}_4}$ is a representation-finite string algebra and $\mathcal{H}(\mathsf{s\tau\text{-}tilt}\ \widetilde{\mathcal{R}_4})$ can be constructed by the String Applet \cite{G-string applet}. Thus, we deduce that $\#\mathsf{s\tau\text{-}tilt}\ \mathcal{R}_4=88$.

(3) Since the non-trivial central elements of $\mathcal{D}_4$ are $\beta_2\alpha_2$, $\alpha_3\beta_3$ and $\alpha_2\beta_1\alpha_1\beta_2$, we define
\begin{center}
$\widetilde{\mathcal{D}_4}:=\mathcal{D}_4/<\beta_2\alpha_2,\alpha_3\beta_3,\alpha_2\beta_1\alpha_1\beta_2>$
\end{center}
so that $\#\mathsf{s\tau\text{-}tilt}\ \widetilde{\mathcal{D}_4}=\#\mathsf{s\tau\text{-}tilt}\ \mathcal{D}_4$. We explain our strategy for computing $\#\mathsf{s\tau\text{-}tilt}\ \widetilde{\mathcal{D}_4}$.

We denote by $a_s(\widetilde{\mathcal{D}_4})$ the number of pairwise non-isomorphic basic support $\tau$-tilting $\widetilde{\mathcal{D}_4}$-modules $M$ with support-rank $s$ (i.e., $|M|=s$) for $0\leqslant s\leqslant 4$. Then, we have
\begin{center}
$\#\mathsf{s\tau\text{-}tilt}\ \widetilde{\mathcal{D}_4}=\sum\limits_{s=0}^{n}a_s(\widetilde{\mathcal{D}_4})$.
\end{center}
Since the support $\tau$-tilting $\widetilde{\mathcal{D}_4}$-module with support-rank $0$ is unique, we have $a_0(\widetilde{\mathcal{D}_4})=1$. Since each simple $\widetilde{\mathcal{D}_4}$-module $S_i$ is a $\widetilde{\mathcal{D}_4}/<1-e_i>$-module and $\widetilde{\mathcal{D}_4}/<1-e_i>\simeq \mathbb{F}$, we observe that the support $\tau$-tilting $\widetilde{\mathcal{D}_4}$-modules with support-rank $1$ are exactly the simple $\widetilde{\mathcal{D}_4}$-modules. Then, $a_1(\widetilde{\mathcal{D}_4})=|\widetilde{\mathcal{D}_4}|=4$.

Let $M$ be a support $\tau$-tilting $\widetilde{\mathcal{D}_4}$-module with support-rank 2, and with supports $e_i$ and $e_j$ ($i\neq j$). Then, $M$ becomes a $\tau$-tilting $\widetilde{\mathcal{D}_4}/J$-module with $J:=<1-e_i-e_j>$. We denote by $\mathsf{b}_{i,j}$ the number of $\tau$-tilting $\widetilde{\mathcal{D}_4}/J$-modules. For example, if $(i,j)=(1,3)$, then
\begin{center}
$\widetilde{\mathcal{D}_4}/J=\mathbb{F}\left ( \xymatrix@C=1cm@R=0.8cm{1\ar@<0.5ex>[r]^{\alpha_1}&3\ar@<0.5ex>[l]^{\beta_1}} \right )/\left \langle \alpha_1\beta_1\right \rangle$.
\end{center}
Note that $\beta_1\alpha_1$ is a central element of $\widetilde{\mathcal{D}_4}/J$, we may apply Proposition \ref{center} and Example \ref{example-simplest} to show that $\mathcal{H}(\mathsf{s\tau\text{-}tilt}\ \widetilde{\mathcal{D}_4}/J)$ is displayed below,
\begin{center}
$\vcenter{\xymatrix@C=1.2cm@R=0.1cm{& \bullet\ar[r]^{} &\circ\ar[dr]^{}\\\bullet\ar[dr]^{}\ar[ur]^{}&&&\circ \\&\bullet\ar[r]^{}&\circ\ar[ur]^{}&}}$,
\end{center}
where we denote by $\bullet$ $\tau$-tilting $\widetilde{\mathcal{D}_4}/J$-modules and by $\circ$ other support $\tau$-tilting (but not $\tau$-tilting) $\widetilde{\mathcal{D}_4}/J$-modules. Hence, we deduce that $\mathsf{b}_{1,3}=3$. Similarly, we have
\begin{center}
\begin{tabular}{c|cccccccccccc}
$(i,j)$&$(1,2)$&$(1,4)$&$(2,3)$&$(2,4)$&$(3,4)$ \\ \hline
$\mathsf{b}_{i,j}$&1&1&3&1&3\\
\end{tabular}
\end{center}
This implies that $a_2(\widetilde{\mathcal{D}_4})=12$.

Let $N$ be a support $\tau$-tilting $\widetilde{\mathcal{D}_4}$-module with support-rank 3. Then, $N$ becomes a $\tau$-tilting $\widetilde{\mathcal{D}_4}/L_j$-module with $L_j:=<e_j>$, where $e_j$ is the only one non-zero primitive idempotent satisfying $Ne_j=0$. We denote by $\mathsf{d}_j$ the number of $\tau$-tilting $\widetilde{\mathcal{D}_4}/L_j$-modules. For example, if $j=4$, then
\begin{center}
$\widetilde{\mathcal{D}_4}/L_4=\mathbb{F}\left ( \xymatrix@C=1cm@R=0.8cm{1\ar@<0.5ex>[r]^{\alpha_1}&3\ar@<0.5ex>[l]^{\beta_1}\ar@<0.5ex>[r]^{\beta_2}&2\ar@<0.5ex>[l]^{\alpha_2}} \right )/\left \langle \alpha_1\beta_1,\alpha_2\beta_2, \beta_2\alpha_2, \alpha_2\beta_1\alpha_1\beta_2 \right \rangle$.
\end{center}
Since $\widetilde{\mathcal{D}_4}/L_4$ is isomorphic to $\widetilde{\mathcal{D}_3}$, $\mathsf{d}_4=17$ by the calculation before. If $j=2$, then
\begin{center}
$\widetilde{\mathcal{D}_4}/L_2=\mathbb{F}\left ( \xymatrix@C=1cm@R=0.8cm{1\ar@<0.5ex>[r]^{\alpha_1}&3\ar@<0.5ex>[l]^{\beta_1}\ar@<0.5ex>[r]^{\beta_3}&4\ar@<0.5ex>[l]^{\alpha_3}}\right )/\left \langle \alpha_1\beta_1,\alpha_3\beta_1,\alpha_1\beta_3,\beta_3\alpha_3, \alpha_3\beta_3\right \rangle$,
\end{center}
and the Hasse quiver $\mathcal{H}(\mathsf{s\tau\text{-}tilt}\ \widetilde{\mathcal{D}_4}/L_2)$ is shown as follows by direct calculation.
\begin{center}
$\xymatrix@C=1.2cm@R=0.6cm{&\bullet\ar[dr]\ar[r]&\bullet\ar[r]\ar[drr]&\bullet\ar[r]&\circ\ar[r]\ar[ddr]&\circ\ar[ddr]&\\
&&\bullet\ar[drr]\ar[ur]& &\circ\ar[ur] & &\\
\bullet\ar[r]\ar[uur]\ar[ddr]&\bullet\ar[r]\ar[ddr]&\circ\ar[drr]\ar[urr] &&\circ\ar[r]&\circ\ar[r]&\circ\\
&&\circ\ar[drr]\ar[urr]& &\circ\ar[dr]& &\\
&\bullet\ar[ur]\ar[r]&\bullet\ar[r]&\bullet\ar[r]\ar[ur]&\circ\ar[r]&\circ\ar[uur]&}$
\end{center}
We deduce that $\mathsf{d}_2=9$. Similarly, we have $\mathsf{d}_1=9$ and $\mathsf{d}_3=1$. Therefore, $a_3(\widetilde{\mathcal{D}_4})=36$.

We may compute $a_4(\widetilde{\mathcal{D}_4})$ by hand, because the support $\tau$-tilting $\widetilde{\mathcal{D}_4}$-modules with support-rank 4 are just $\tau$-tilting $\widetilde{\mathcal{D}_4}$-modules, which can be obtained by the left mutations starting with $\widetilde{\mathcal{D}_4}$. See Appendix A for a complete list of $\tau$-tilting $\widetilde{\mathcal{D}_4}$-modules and one may easily construct the part of $\mathcal{H}(\mathsf{s\tau\text{-}tilt}\ \widetilde{\mathcal{D}_4})$ consisting of all $\tau$-tilting $\widetilde{\mathcal{D}_4}$-modules. Then, $a_4(\widetilde{\mathcal{D}_4})=61$ and we conclude that $\#\mathsf{s\tau\text{-}tilt}\ \widetilde{\mathcal{D}_4}=1+4+12+36+61=114$.

(4) Since the non-trivial central elements of $\mathcal{H}_4$ are $\beta_1\alpha_1$, $\beta_2\alpha_2+\beta_3\alpha_3$ and $\beta_3\beta_2\alpha_2\alpha_3$, we define
\begin{center}
$\widetilde{\mathcal{H}_4}:=\mathcal{H}_4/<\beta_1\alpha_1,\beta_2\alpha_2+\beta_3\alpha_3,\beta_3\beta_2\alpha_2\alpha_3>$
\end{center}
so that $\#\mathsf{s\tau\text{-}tilt}\ \widetilde{\mathcal{H}_4}=\#\mathsf{s\tau\text{-}tilt}\ \mathcal{H}_4$. Similar to the case of $\mathcal{D}_4$, one can compute the left mutations starting with $\widetilde{\mathcal{H}_4}$ to find all $\tau$-tilting $\widetilde{\mathcal{H}_4}$-modules and the number is 47. Then,
\renewcommand\arraystretch{1.2}
\begin{center}
\begin{tabular}{c|ccccc||ccccccc}
$s$&$0$&$1$&$2$&$3$&$4$&$\#\mathsf{s\tau\text{-}tilt}\ \widetilde{\mathcal{H}_4}$  \\ \hline
$a_s(\widetilde{\mathcal{H}_4})$&1&4&12&32&47&96\\
\end{tabular}
\end{center}
Also, the part of $\mathcal{H}(\mathsf{s\tau\text{-}tilt}\ \widetilde{\mathcal{H}_4})$ consisting of all $\tau$-tilting $\widetilde{\mathcal{H}_4}$-modules can be obtained.
\end{proof}

\begin{theorem}\label{result-tame}
If the Schur algebra $S(n,r)$ is tame, it is $\tau$-tilting finite.
\end{theorem}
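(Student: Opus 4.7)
The plan is to reduce the theorem to the already-established $\tau$-tilting finiteness of the six specific basic algebras $\mathbb{F}, \mathcal{A}_2, \mathcal{D}_3, \mathcal{D}_4, \mathcal{R}_4, \mathcal{H}_4$ via block decomposition. First I would note that for any finite-dimensional algebra $A=\bigoplus_{i} B_i$ written as a direct sum of indecomposable block algebras, one has $\mathsf{s\tau\text{-}tilt}\ A \cong \prod_{i}\mathsf{s\tau\text{-}tilt}\ B_i$; in particular, $A$ is $\tau$-tilting finite if and only if every block $B_i$ is $\tau$-tilting finite. Since $\tau$-tilting finiteness is a Morita invariant, it suffices to show that every indecomposable block of the basic algebra of a tame $S(n,r)$ is $\tau$-tilting finite.

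Next I would invoke the block classification. By Proposition~\ref{summary}, the tame Schur algebras are precisely those with $p=2, n=2, r\in\{4,9,11\}$; $p=3, n=2, r\in\{9,10,11\}$; and $p=3, n=3, r\in\{7,8\}$. By Erdmann's work \cite{Erdmann-finite} together with \cite[Section~5]{DEMN-tame schur}, for each such $(n,r)$ the indecomposable block algebras of $S(n,r)$ are Morita equivalent to one of the six algebras $\mathbb{F}, \mathcal{A}_2, \mathcal{D}_3, \mathcal{D}_4, \mathcal{R}_4$ or $\mathcal{H}_4$. This is a purely structural input that I would simply cite rather than reprove.

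Finally I would assemble the conclusion from the already-established $\tau$-tilting finiteness of each of these six algebras. The algebra $\mathbb{F}$ is trivially $\tau$-tilting finite. For $\mathcal{A}_2$, Theorem~\ref{result-finite} gives $\#\mathsf{s\tau\text{-}tilt}\ \mathcal{A}_2=\binom{4}{2}=6$, confirming $\tau$-tilting finiteness. For each of $\mathcal{D}_3, \mathcal{D}_4, \mathcal{R}_4, \mathcal{H}_4$, Lemma~\ref{method of D_4} records both $\tau$-tilting finiteness and the precise counts $28, 114, 88, 96$. Combining these with the block decomposition observation of the first paragraph yields that any tame $S(n,r)$ is $\tau$-tilting finite.

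There is essentially no obstacle at this stage; the genuine work was done in Theorem~\ref{result-finite} and Lemma~\ref{method of D_4} (where one reduces modulo central elements and then enumerates via the Hasse quiver of left mutations) together with the external block classification of Erdmann and Doty--Erdmann--Martin--Nakano. The present theorem is the clean corollary that packages these inputs.
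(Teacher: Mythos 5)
Your proposal is correct and follows essentially the same route as the paper: enumerate the tame cases via Proposition \ref{summary}, identify each block of the basic algebra as one of $\mathbb{F}, \mathcal{A}_2, \mathcal{D}_3, \mathcal{D}_4, \mathcal{R}_4, \mathcal{H}_4$ using \cite{Erdmann-finite} and \cite[Section 5]{DEMN-tame schur} (the paper computes the $p=2$, $S(2,11)$ case explicitly in Example \ref{s(2,11)}), and conclude blockwise from Theorem \ref{result-finite} and Lemma \ref{method of D_4}. No gaps.
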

\begin{proof}
We have proved in Lemma \ref{method of D_4} that $\mathcal{D}_3$, $\mathcal{D}_4$, $\mathcal{R}_4$ and $\mathcal{H}_4$ are $\tau$-tilting finite. Now, it suffices to make clear that these are all the tame blocks of tame Schur algebras. By Proposition \ref{summary}, it is enough to consider $S(2,r)$ for $r=4,9,11$ over $p=2$, $S(2,r)$ for $r=9, 10, 11$ and $S(3,r)$ for $r=7,8$ over $p=3$. We have already shown in Example \ref{s(2,11)} that the basic algebra of $S(2,11)$ over $p=2$ is isomorphic to $\mathcal{D}_3\oplus \mathcal{A}_2\oplus \mathbb{F}$. Then, the basic algebra of other tame Schur algebras can be found in \cite[Section 5]{DEMN-tame schur}. We recall the result in \cite{DEMN-tame schur} as follows.

Let $p=2$, the basic algebra of $S(2,4)$ is isomorphic to $\mathcal{D}_3$ and the basic algebra of $S(2,9)$ is isomorphic to $\mathcal{D}_3\oplus \mathbb{F}\oplus \mathbb{F}$. Let $p=3$, the basic algebra of $S(2,9)$ is isomorphic to $\mathcal{D}_4\oplus \mathbb{F}$, the basic algebra of $S(2,10)$ is isomorphic to $\mathcal{D}_4\oplus \mathbb{F}\oplus \mathbb{F}$ and the basic algebra of $S(2,11)$ is isomorphic to $\mathcal{D}_4\oplus \mathcal{A}_2$; the basic algebra of $S(3,7)$ is isomorphic to $\mathcal{R}_4\oplus \mathcal{A}_2\oplus \mathcal{A}_2$ and the basic algebra of $S(3,8)$ is isomorphic to $\mathcal{R}_4\oplus \mathcal{H}_4\oplus \mathcal{A}_2$.
\end{proof}

\section{Wild Schur Algebras}\label{section-wild}
As we mentioned in the introduction, the cases in $(\star)$ have been distinguished and we will deal with them in the subsection \ref{remaining cases}. Now, we determine the $\tau$-tilting finiteness for wild Schur algebras over various $p$ as follows. In this section, we will use the decomposition matrix $[S^\lambda:D^\mu]$ of $\mathbb{F}G_r$ given in \cite{James-symmetric} without further notice.

\subsection{The characteristic $p=2$}
We assume in this subsection that the characteristic of $\mathbb{F}$ is 2. Then, the $\tau$-tilting finiteness for $S(n,r)$ is shown in Table \ref{p=2} and the proof is divided into several propositions as displayed below. In particular, we claim that the color \textbf{purple} means $\tau$-tilting finite, the color \textbf{red} means $\tau$-tilting infinite, the capital letter \textbf{S} means semi-simple, the capital letter \textbf{F} means representation-finite, the capital letter \textbf{T} means tame and the capital letter \textbf{W} means wild.
\begin{table}
\caption{The $\tau$-tilting finite $S(n,r)$ over $p=2$.}\label{p=2}
\centering
\renewcommand\arraystretch{1.5}
\begin{tabular}{|c|c|c|c|c|c|c|c|c|c|c|c|c|c|c|c|c}
\hline
$r$&1&2&3&4&5&6&7&8&9&10&11&12\\ \hline
$S(2,r)$&\cellcolor{myblue}S&\cellcolor{myblue}F&\cellcolor{myblue}S&\cellcolor{myblue}T&\cellcolor{myblue}F&\cellcolor{myblue}W&\cellcolor{myblue}F&\cellcolor{mygray}W
&\cellcolor{myblue}T&\cellcolor{myred}W&\cellcolor{myblue}T&\cellcolor{myred}W\\ \hline\hline
$r$&13&14&15&16&17&18&19&20&21&22&23&$\cdots$\\ \hline
$S(2,r)$&\cellcolor{myblue}W&\cellcolor{myred}W&\cellcolor{myblue}W&\cellcolor{myred}W&\cellcolor{mygray}W&\cellcolor{myred}W&\cellcolor{mygray}W&
\cellcolor{myred}W&\cellcolor{myred}W&\cellcolor{myred}W&\cellcolor{myred}W&\cellcolor{myred}$\cdots$      \\\hline
\end{tabular}
\begin{tabular}{|c|c|c|c|c|c|c|c|c|c|c|c|c|c|c}
\hline
\diagbox{$n$}{$r$}&1&2&3&4&5&6&7&8&9&10&11&12&13&$\cdots$\\ \hline
3&\cellcolor{myblue}S&\cellcolor{myblue}F&\cellcolor{myblue}F&\cellcolor{mygray}W&\cellcolor{myblue}W&\cellcolor{myred}W&\cellcolor{myred}W&\cellcolor{myred}W&\cellcolor{myred}W&\cellcolor{myred}W&
\cellcolor{myred}W&\cellcolor{myred}W&\cellcolor{myred}W&\cellcolor{myred}$\cdots$\\ \hline
4&\cellcolor{myblue}S&\cellcolor{myblue}F&\cellcolor{myblue}F&\cellcolor{myred}W&\cellcolor{myblue}W&\cellcolor{myred}W&\cellcolor{myred}W&\cellcolor{myred}W&\cellcolor{myred}W&\cellcolor{myred}W&
\cellcolor{myred}W&\cellcolor{myred}W&\cellcolor{myred}W&\cellcolor{myred}$\cdots$\\ \hline
5&\cellcolor{myblue}S&\cellcolor{myblue}F&\cellcolor{myblue}F&\cellcolor{myred}W&\cellcolor{mygray}W&\cellcolor{myred}W&\cellcolor{myred}W&\cellcolor{myred}W&\cellcolor{myred}W&\cellcolor{myred}W&
\cellcolor{myred}W&\cellcolor{myred}W&\cellcolor{myred}W&\cellcolor{myred}$\cdots$\\ \hline
6&\cellcolor{myblue}S&\cellcolor{myblue}F&\cellcolor{myblue}F&\cellcolor{myred}W&\cellcolor{mygray}W&\cellcolor{myred}W&\cellcolor{myred}W&\cellcolor{myred}W&\cellcolor{myred}W&\cellcolor{myred}W&
\cellcolor{myred}W&\cellcolor{myred}W&\cellcolor{myred}W&\cellcolor{myred}$\cdots$\\ \hline
$\vdots$&\cellcolor{myblue}$\vdots$&\cellcolor{myblue}$\vdots$&\cellcolor{myblue}$\vdots$&\cellcolor{myred}$\vdots$&\cellcolor{mygray}$\vdots$&\cellcolor{myred}$\vdots$&
\cellcolor{myred}$\vdots$&\cellcolor{myred}$\vdots$&\cellcolor{myred}$\vdots$&\cellcolor{myred}$\vdots$&\cellcolor{myred}$\vdots$&\cellcolor{myred}
$\vdots$&\cellcolor{myred}$\vdots$&\cellcolor{myred}$\ddots$
\end{tabular}
\end{table}

\begin{proposition}\label{S(2,13)}
Let $p=2$. Then, the wild Schur algebras $S(2,6)$, $S(2,13)$ and $S(2,15)$ are $\tau$-tilting finite.
\end{proposition}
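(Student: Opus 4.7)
The plan is to adapt the strategy of Example \ref{s(2,11)} to the three wild cases at hand. First, for each $r \in \{6, 13, 15\}$ I would determine the block decomposition of the basic algebra of $S(2,r)$ by computing the $2$-cores of every $\lambda \in \Omega(2,r)$. This gives, for $r=6$, a single block containing $(6), (5,1), (4,2), (3,3)$ with $2$-core $\emptyset$; for $r=13$, two blocks with $2$-cores $(1)$ and $(2,1)$ splitting the partitions as $\{(13), (11,2), (9,4), (7,6)\}$ and $\{(12,1), (10,3), (8,5)\}$; for $r=15$, analogous splittings $\{(15), (13,2), (11,4), (9,6)\}$ and $\{(14,1), (12,3), (10,5), (8,7)\}$. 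A direct application of the recursive formula in Lemma \ref{EH-method} then shows that in every case the Ext-quiver of each block is very small: each connected component is either an isolated vertex, a two-vertex line, or a four-vertex straight line. In particular none of the $\tau$-tilting infinite quivers $\mathsf{Q}_1, \mathsf{Q}_2, \mathsf{Q}_3$ of Lemma \ref{tau-tilting infinite quiver} appears as a subquiver, which is consistent with the claim.

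To turn the Ext-quivers into the actual block algebras I would use Theorem \ref{henke charac} to compute $\mathsf{ch}\, Y^\lambda$, read off the Specht filtration of each $Y^\lambda$, and then combine this with the $p=2$ decomposition matrices from \cite{James-symmetric} and the self-duality of Young modules to pin down the radical layers of every $Y^\lambda$, exactly as done for $S(2,11)$ in Example \ref{s(2,11)}. The defining relations of each block algebra of $S(2,r)$ can then be extracted from these radical series.

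Finally, I would verify $\tau$-tilting finiteness block by block. Isolated vertices contribute copies of $\mathbb{F}$, and the two-vertex components reduce to (quotients of) $\mathcal{A}_2$; both are $\tau$-tilting finite by Theorem \ref{result-finite}. For each four-vertex line block, I expect the resulting algebra to contain enough central elements in its radical so that Proposition \ref{center} reduces it to a much smaller finite-dimensional algebra whose support $\tau$-tilting modules can either be identified with those of $\mathcal{A}_4$ from Subsection \ref{subsection3.1}, or be enumerated by successive left mutations together with the support-rank bookkeeping used in part (3) of Lemma \ref{method of D_4}. The main obstacle will be extracting a complete and correct set of relations for the four-vertex blocks: while the Ext-quiver comes for free from Lemma \ref{EH-method}, the relations encode delicate information about how the composition factors stack inside each $Y^\lambda$, and pinning them down requires careful case-by-case bookkeeping of Specht filtrations and decomposition numbers. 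Once these are in hand and sufficient central elements are identified to apply Proposition \ref{center}, the remaining check is a finite combinatorial computation.
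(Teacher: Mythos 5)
Your proposal follows essentially the same route as the paper: block decomposition, Henke's character formula combined with the decomposition matrices and the self-duality of Young modules to pin down the radical series and hence the block algebras, and then Proposition \ref{center} plus direct enumeration. One correction: the four-vertex block (the paper's $\mathcal{K}_4$, which turns out to be the same for all three values of $r$) is \emph{not} identifiable with $\mathcal{A}_4$ --- it is a wild algebra with relations such as $\beta_1\alpha_1\alpha_2-\alpha_2\alpha_3\beta_3$ that differ from those of $\mathcal{A}_4$, and it has $136$ support $\tau$-tilting modules rather than $\binom{8}{4}=70$ --- so only your second alternative (finding central elements, here $\beta_1\alpha_1+\alpha_3\beta_3$ and $\beta_3\beta_2\alpha_2\alpha_3$, and then enumerating by left mutations with support-rank bookkeeping) goes through, and that is exactly what the paper does.
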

\begin{proof}
We consider the Young modules $Y^\lambda$ for partitions $\lambda$ with at most two parts.

(1) The part of the decomposition matrix $[S^\lambda:D^\mu]$ for the partitions in the principal block of $\mathbb{F}G_{6}$ with at most two parts is
\begin{center}
$\begin{matrix}
(6)\\
(5,1)\\
(4,2)\\
(3^2)
\end{matrix}\begin{pmatrix}
1& \\
1&1\\
1&1&1\\
1&0&1
\end{pmatrix}$,
\end{center}
and the characters of Young $\mathbb{F}G_{6}$-modules are given by Theorem \ref{henke charac} as follows.
\begin{center}
$\begin{aligned}
\mathsf{ch}\ Y^{(6)}&=\chi^{(6)},\\
\mathsf{ch}\ Y^{(5,1)}&=\chi^{(6)}+\chi^{(5,1)},\\
\mathsf{ch}\ Y^{(4,2)}&=\chi^{(5,1)}+\chi^{(4,2)},\\
\mathsf{ch}\ Y^{(3^2)}&=\chi^{(6)}+\chi^{(5,1)}+\chi^{(4,2)}+\chi^{(3^2)}.
\end{aligned}$
\end{center}
Similar to the strategy in the proof of Example \ref{s(2,11)}, we may compute the radical series of Young $\mathbb{F}G_{6}$-modules. Then, we can show that $S_B=\mathsf{End}_{\mathbb{F}G_{6}}(Y^{(6)}\oplus Y^{(5,1)}\oplus Y^{(4,2)}\oplus Y^{(3^2)})$ is isomorphic to $\mathcal{K}_4:=\mathbb{F}Q/I$ with
\begin{equation}
Q:\xymatrix@C=1cm@R=0.3cm{\circ\ar@<0.5ex>[r]^{\alpha_1}&\circ\ar@<0.5ex>[l]^{\beta_1}\ar@<0.5ex>[r]^{\alpha_2}&\circ\ar@<0.5ex>[l]^{\beta_2}\ar@<0.5ex>[r]^{\alpha_3}&\circ\ar@<0.5ex>[l]^{\beta_3}}\ \text{and}\ I: \left \langle \begin{matrix}
\alpha_1\beta_1,\alpha_2\beta_2,\beta_3\alpha_3, \alpha_1\alpha_2\alpha_3, \beta_3\beta_2\beta_1,\\
\beta_1\alpha_1\alpha_2-\alpha_2\alpha_3\beta_3, \beta_2\beta_1\alpha_1-\alpha_3\beta_3\beta_2
\end{matrix}\right \rangle.
\end{equation}
(See \cite[3.5]{DEMN-tame schur} for another method to show this.) Since $\beta_1\alpha_1+\alpha_3\beta_3$ and $\beta_3\beta_2\alpha_2\alpha_3$ are non-trivial central elements of $\mathcal{K}_4$, the number of (pairwise non-isomorphic basic) support $\tau$-tilting modules for $\mathcal{K}_4$ is the same as $\mathcal{K}_4/\left \langle \beta_1\alpha_1,\alpha_3\beta_3, \beta_3\beta_2\alpha_2\alpha_3\right \rangle$ by Proposition \ref{center}. Then, similar to the proof method of Lemma \ref{method of D_4}, we have
\renewcommand\arraystretch{1.2}
\begin{center}
\begin{tabular}{c|ccccc||ccccccc}
$s$&$0$&$1$&$2$&$3$&$4$&$\#\mathsf{s\tau\text{-}tilt}\ \mathcal{K}_4$  \\ \hline
$a_s(\mathcal{K}_4)$&1&4&12&36&83&136\\
\end{tabular}.
\end{center}
This implies that $S_B$ is $\tau$-tilting finite. Hence, the statement follows from the fact that $S_B$ is the basic algebra of $S(2,6)$.

(2) The group algebra $\mathbb{F}G_{13}$ contains two blocks, i.e., the principal block $B_1$ and the block $B_2$ labeled by $2$-core $(2,1)$. The parts of the decomposition matrix $[S^\lambda:D^\mu]$ for the partitions in $B_1$ and $B_2$ with at most two parts are
\begin{center}
$B_1:\begin{matrix}
(13)\\
(11,2)\\
(9,4)\\
(7,6)
\end{matrix}\begin{pmatrix}
1&&&\\
1&1&&\\
1&1&1&\\
1&0&1&1
\end{pmatrix}$, $B_2: \begin{matrix}
(12,1)\\
(10,3)\\
(8,5)
\end{matrix}\begin{pmatrix}
1&&\\
0&1&\\
1&0&1
\end{pmatrix}$.
\end{center}

We may prove that $S_{B_1}$ is isomorphic to $\mathcal{K}_4$, because the characters of Young modules for $\mathbb{F}G_{13}$ are as follows. (One may compare this with the case of $S(2,6)$.)
\begin{center}
$\begin{aligned}
\mathsf{ch}\ Y^{(13)}&=\chi^{(13)},\\
\mathsf{ch}\ Y^{(11,2)}&=\chi^{(13)}+\chi^{(11,2)},\\
\mathsf{ch}\ Y^{(9,4)}&=\chi^{(11,2)}+\chi^{(9,4)},\\
\mathsf{ch}\ Y^{(7,6)}&=\chi^{(13)}+\chi^{(11,2)}+\chi^{(9,4)}+\chi^{(7,6)}.
\end{aligned}$
\end{center}

On the other hand, $S_{B_2}$ is isomorphic to $\mathcal{A}_2\oplus \mathbb{F}$ by \cite[Proposition 4.1]{Erdmann-finite} as we have explained at the start of subsection \ref{subsection3.1}. Therefore, the basic algebra of $S(2,13)$ is $\mathcal{K}_4\oplus \mathcal{A}_2\oplus \mathbb{F}$, which is also $\tau$-tilting finite.

(3) The group algebra $\mathbb{F}G_{15}$ also contains two blocks and the parts of the decomposition matrix $[S^\lambda:D^\mu]$ for the partitions with at most two parts are as follows.
\begin{center}
$\begin{matrix}
(15)\\
(13,2)\\
(11,4)\\
(9,6)
\end{matrix}\begin{pmatrix}
1&&&\\
0&1&&\\
0&0&1&\\
0&1&0&1
\end{pmatrix}$, $\begin{matrix}
(14,1)\\
(12,3)\\
(10,5)\\
(8,7)
\end{matrix}\begin{pmatrix}
1&&\\
1&1&\\
1&1&1\\
1&0&1&1
\end{pmatrix}$.
\end{center}
After computing the characters of Young $\mathbb{F}G_{15}$-modules by Theorem \ref{henke charac}, we deduce that the basic algebra of $S(2,15)$ is isomorphic to $\mathcal{K}_4\oplus \mathcal{A}_2\oplus \mathbb{F}\oplus \mathbb{F}$.
\end{proof}

Now, we look at the case $S(2,8)$. Let $B$ be the principal block of $\mathbb{F}G_{8}$, the part of the decomposition matrix $[S^\lambda:D^\mu]$ for the partitions in $B$ with at most two parts is
\begin{center}
$\begin{matrix}
(8)\\
(7,1)\\
(6,2)\\
(5,3)\\
(4^2)
\end{matrix}\begin{pmatrix}
1& \\
1&1\\
0&1&1\\
0&1&1&1\\
0&1&0&1
\end{pmatrix}$.
\end{center}
On the other hand, Theorem \ref{henke charac} implies that the characters of Young $\mathbb{F}G_{8}$-modules are
\begin{center}
$\begin{aligned}
\mathsf{ch}\ Y^{(8)}&=\chi^{(8)},\\
\mathsf{ch}\ Y^{(7,1)}&=\chi^{(8)}+\chi^{(7,1)},\\
\mathsf{ch}\ Y^{(6,2)}&=\chi^{(8)}+\chi^{(7,1)}+\chi^{(6,2)},\\
\mathsf{ch}\ Y^{(5,3)}&=\chi^{(6,2)}+\chi^{(5,3)},\\
\mathsf{ch}\ Y^{(4^2)}&=\chi^{(8)}+\chi^{(7,1)}+\chi^{(6,2)}+\chi^{(5,3)}+\chi^{(4^2)}.
\end{aligned}$
\end{center}
It is obvious that $Y^{(8)}=D^{(8)}$ and we may find others as follows (I am grateful to Prof. Ariki for showing me the structure of $Y^{(4^2)}$).
\begin{center}
$Y^{(7,1)}=\begin{matrix}
D^{(8)}\\
D^{(7,1)}\\
D^{(8)}
\end{matrix}$, $Y^{(6,2)}=\vcenter{\xymatrix@C=0.01cm@R=0.2cm{&D^{(7,1)}\ar@{-}[dr]\ar@{-}[ddl]&&&D^{(8)}\ar@{-}[ddl] \\
& &D^{(6,2)}\ar@{-}[dr]&&\\
D^{(8)}&&&D^{(7,1)}&}}$, $Y^{(5,3)}=\begin{matrix}
D^{(6,2)}\\
D^{(7,1)}\\
D^{(5,3)}\\
D^{(7,1)}\\
D^{(6,2)}
\end{matrix}$, $Y^{(4^2)}=\vcenter{\xymatrix@C=0.1cm@R=0.3cm{&D^{(7,1)}\ar@{-}[d]\ar@{-}[dr]\ar@{-}@/_1pc/[ddddl]&&D^{(8)}\ar@{-}@/^1pc/[ddddl]\\
&D^{(5,3)}\ar@{-}[d]&D^{(6,2)}\ar@{-}[d]&\\
&D^{(7,1)}\ar@{-}[d]&D^{(7,1)}\ar@{-}[d]&\\
&D^{(6,2)}\ar@{-}[dr]&D^{(5,3)}\ar@{-}[d]&\\
D^{(8)}&&D^{(7,1)}&
}}$.
\end{center}
Note that the dimension of $\mathsf{Hom}_{\mathbb{F}G_{8}}(Y^\lambda,Y^\mu)$ between two Young modules $Y^\lambda,Y^\mu$ is equal to the inner product $(\mathsf{ch}\ Y^\lambda, \mathsf{ch}\ Y^\mu)$. By direct calculation, we conclude that $S_B=\mathsf{End}_{\mathbb{F}G_{8}}(Y^{(8)}\oplus Y^{(7,1)}\oplus Y^{(6,2)}\oplus Y^{(5,3)}\oplus Y^{(4^2)})$ is isomorphic to $\mathcal{L}_5:=\mathbb{F}Q/I$ with
\begin{equation}
\begin{aligned}
\ & \ \ \ \ \ \ \ \ \ \ \ \ \ Q: \xymatrix@C=1cm@R=0.8cm{(5,3)\ar@<0.5ex>[r]^{\alpha_1} &(4^2)\ar@<0.5ex>[r]^{\alpha_2} \ar@<0.5ex>[l]^{\beta_1} \ar@<0.5ex>[d]^{\alpha_4} &(6,2)\ar@<0.5ex>[l]^{\beta_2}\ar@<0.5ex>[r]^{\alpha_3}&(7,1)\ar@<0.5ex>[l]^{\beta_3} \\
&(8)\ar@<0.5ex>[u]^{\beta_4}&} \ \text{and}\\
I:& \left \langle \begin{matrix}
\alpha_1\beta_1,\alpha_1\alpha_4,\beta_3\alpha_3,\beta_2\alpha_2, \beta_4\alpha_4,\beta_4\beta_1, \beta_4\alpha_2\beta_2, \alpha_1\alpha_2\alpha_3,\alpha_2\beta_2\alpha_4,\beta_3\beta_2\beta_1,\\
\beta_1\alpha_1\alpha_2-\alpha_2\alpha_3\beta_3,\beta_2\beta_1\alpha_1-\alpha_3\beta_3\beta_2,\alpha_2\beta_2\beta_1\alpha_1-\beta_1\alpha_1\alpha_2\beta_2
\end{matrix}\right \rangle.
\end{aligned}
\end{equation}
Here, we replace each vertex in the quiver of $S_B$ by the partition $\lambda$ associated with the Young module $Y^\lambda$. We have checked that there are at least 500 pairwise non-isomorphic basic support $\tau$-tilting $\mathcal{L}_5$-modules, so we leave this case to the future.

\begin{proposition}\label{S(2,8)over p=2}
Let $p=2$. Then, the wild Schur algebras $S(2,17)$ and $S(2,19)$ are $\tau$-tilting finite if and only if $S(2,8)$ is $\tau$-tilting finite.
\end{proposition}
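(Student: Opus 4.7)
The plan is to show that the basic algebras of $S(2,17)$ and $S(2,19)$ decompose, as direct sums of block algebras, into one summand that is (Morita equivalent to) $\mathcal{L}_5$, together with several summands that have already been shown to be $\tau$-tilting finite. Once this is established, the stated equivalence follows at once, since $\tau$-tilting finiteness of a basic algebra is equivalent to that of each of its blocks, and $\mathcal{L}_5$ is (up to the trivial block) the basic algebra of $S(2,8)$.

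First I would determine the blocks of $\mathbb{F}G_{17}$ and $\mathbb{F}G_{19}$ via their $2$-cores and, for each such block $B$, list the partitions of $r\in\{17,19\}$ lying in $B\cap\Omega(2,r)$, recording the relevant rows and columns of the decomposition matrix of $\mathbb{F}G_r$ available in James. Next, using Theorem \ref{henke charac}, I would compute the ordinary character $\mathsf{ch}\,Y^\lambda$ of each Young module attached to a partition in $\Omega(2,r)$, and then read off the radical series of each $Y^\lambda$ from its Specht filtration combined with the self-duality of Young modules, exactly as in Example \ref{s(2,11)} and in the discussion preceding the statement.

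Then, using $\dim \mathsf{Hom}_{\mathbb{F}G_r}(Y^\lambda,Y^\mu)=(\mathsf{ch}\,Y^\lambda,\mathsf{ch}\,Y^\mu)$ — or, more efficiently, the recursion of Lemma \ref{EH-method} — I would extract the Gabriel quiver of each block $S_B$. The key prediction, which the Erdmann–Henke recursion makes very plausible a priori since the parities of $s=\lambda_1-\lambda_2$ for $r=8,17,19$ generate the same arrow patterns, is that for each of $r=17$ and $r=19$ precisely one block carries five vertices whose Hom-pattern reproduces that of the five-vertex block of $S(2,8)$, while the remaining blocks fall into the already-treated families $\mathcal{A}_m$, $\mathcal{K}_4$ and $\mathbb{F}$. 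Their $\tau$-tilting finiteness is covered by Theorem \ref{result-finite} and Proposition \ref{S(2,13)}, so the whole basic algebra is $\tau$-tilting finite if and only if the distinguished five-vertex block is.

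The main obstacle will be upgrading the quiver-level identification to an actual isomorphism with $\mathcal{L}_5$, so that the ``if and only if'' of the statement is genuine rather than one-directional. This requires comparing admissible relations, which amounts to recovering the minimal projective presentations of the simples in the distinguished block and checking that they match those determined from the detailed submodule structure of $Y^{(8)},Y^{(7,1)},Y^{(6,2)},Y^{(5,3)},Y^{(4^2)}$ used to pin down $\mathcal{L}_5$. The Henke-character computation and the self-duality of Young modules should supply enough constraints to force the relations to agree; executing this bookkeeping for the specific Young modules of $\mathbb{F}G_{17}$ and $\mathbb{F}G_{19}$ is the delicate step.
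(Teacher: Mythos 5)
Your proposal follows essentially the same route as the paper: identify the blocks of $\mathbb{F}G_{17}$ and $\mathbb{F}G_{19}$ via their $2$-cores, compute the Young-module characters with Theorem \ref{henke charac}, and conclude that the basic algebras are $\mathcal{L}_5\oplus\mathcal{A}_2\oplus\mathbb{F}\oplus\mathbb{F}$ and $\mathcal{D}_3\oplus\mathcal{L}_5\oplus\mathbb{F}\oplus\mathbb{F}$ respectively, so that everything reduces to the $\tau$-tilting finiteness of $\mathcal{L}_5$, i.e.\ of the principal block of $S(2,8)$. The only small correction is that for $r=19$ the extra non-trivial block is $\mathcal{D}_3$ (already handled in Lemma \ref{method of D_4}) rather than one of $\mathcal{A}_m$ or $\mathcal{K}_4$, which does not affect the argument.
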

\begin{proof}
We show that the basic algebra of $S(2,17)$ is isomorphic to $\mathcal{L}_5\oplus \mathcal{A}_2\oplus \mathbb{F}\oplus \mathbb{F}$. The blocks of $\mathbb{F}G_{17}$ and the parts of the decomposition matrix $[S^\lambda:D^\mu]$ for the partitions with at most two parts are as follows.
\begin{center}
$\begin{matrix}
(17)\\
(15,2)\\
(13,4)\\
(11,6)\\
(9,8)
\end{matrix}\begin{pmatrix}
1&&&\\
1&1&&\\
0&1&1&\\
0&1&1&1\\
0&1&0&1&1
\end{pmatrix}$, $\begin{matrix}
(16,1)\\
(14,3)\\
(12,5)\\
(10,7)
\end{matrix}\begin{pmatrix}
1&&\\
0&1&\\
0&0&1\\
0&1&0&1
\end{pmatrix}$.
\end{center}
In order to identity $\mathcal{L}_5$, it suffices to check the characters of Young $\mathbb{F}G_{17}$-modules:
\begin{center}
$\begin{aligned}
\mathsf{ch}\ Y^{(17)}&=\chi^{(17)}, \\
\mathsf{ch}\ Y^{(15,2)}&=\chi^{(17)}+\chi^{(15,2)},\\
\mathsf{ch}\ Y^{(13,4)}&=\chi^{(17)}+\chi^{(15,2)}+\chi^{(13,4)}, \\
\mathsf{ch}\ Y^{(11,6)}&=\chi^{(13,4)}+\chi^{(11,6)},\\
\mathsf{ch}\ Y^{(9,8)}&=\chi^{(17)}+\chi^{(15,2)}+\chi^{(13,4)}+\chi^{(11,6)}+\chi^{(9,8)}.
\end{aligned}$
\end{center}

For the case $S(2,19)$, the blocks of $\mathbb{F}G_{19}$ and the parts of the decomposition matrix $[S^\lambda:D^\mu]$ for the partitions with at most two parts are
\begin{center}
$\begin{matrix}
(19)\\
(17,2)\\
(15,4)\\
(13,6)\\
(11,8)
\end{matrix}\begin{pmatrix}
1&&\\
0&1&\\
1&0&1\\
0&0&0&1\\
0&0&1&0&1
\end{pmatrix}$, $\begin{matrix}
(18,1)\\
(16,3)\\
(14,5)\\
(12,7)\\
(10,9)
\end{matrix}\begin{pmatrix}
1&&&\\
1&1&&\\
0&1&1&\\
0&1&1&1\\
0&1&0&1&1
\end{pmatrix}$.
\end{center}
Also, by Theorem \ref{henke charac}, we have
\begin{center}
$\begin{aligned}
\mathsf{ch}\ Y^{(19)}&=\chi^{(19)}, \ \mathsf{ch}\ Y^{(15,4)}=\chi^{(19)}+\chi^{(15,4)}, \\
\mathsf{ch}\ Y^{(11,8)}&=\chi^{(19)}+\chi^{(15,4)}+\chi^{(11,8)};\\
\mathsf{ch}\ Y^{(18,1)}&=\chi^{(18,1)}, \ \mathsf{ch}\ Y^{(16,3)}=\chi^{(18,1)}+\chi^{(16,3)},\\
\mathsf{ch}\ Y^{(14,5)}&=\chi^{(18,1)}+\chi^{(16,3)}+\chi^{(14,5)}, \ \mathsf{ch}\ Y^{(12,7)}=\chi^{(14,5)}+\chi^{(12,7)},\\
\mathsf{ch}\ Y^{(10,9)}&=\chi^{(18,1)}+\chi^{(16,3)}+\chi^{(14,5)}+\chi^{(12,7)}+\chi^{(10,9)};\\
\mathsf{ch}\ Y^{(17,2)}&=\chi^{(17,2)}; \ \mathsf{ch}\ Y^{(13,6)}=\chi^{(13,6)}.
\end{aligned}$
\end{center}
Similar to the above, the basic algebra of $S(2,19)$ is isomorphic to $\mathcal{D}_3\oplus \mathcal{L}_5\oplus \mathbb{F}\oplus \mathbb{F}$, where $\mathcal{D}_3$ is a $\tau$-tilting finite algebra defined in subsection \ref{subsec-3.2}.
\end{proof}

\begin{proposition}
Let $p=2$.
\begin{description}\setlength{\itemsep}{-3pt}
  \item[(1)] If $r$ is even, the Schur algebra $S(2,r)$ is $\tau$-tilting infinite for any $r\geqslant 10$.
  \item[(2)] If $r$ is odd, the Schur algebra $S(2,r)$ is $\tau$-tilting infinite for any $r\geqslant 21$.
\end{description}
\end{proposition}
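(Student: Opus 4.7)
The plan is to reduce the proof to two base cases---namely $S(2,10)$ and $S(2,21)$---and then propagate by repeatedly applying Lemma \ref{n+r}, which for $n=2$ says that $S(2,r+2)$ is $\tau$-tilting infinite whenever $S(2,r)$ is. This inductively covers all even $r\geqslant 10$ and all odd $r\geqslant 21$.

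For the even base case $S(2,10)$, I would follow the strategy described in Section \ref{section2}: locate a block algebra whose Gabriel quiver contains one of the $\tau$-tilting infinite quivers $\mathsf{Q}_1,\mathsf{Q}_2,\mathsf{Q}_3$ as a subquiver. A short recursive application of Lemma \ref{EH-method} to the four vertices $v^{10},v^8,v^0,v^2$ yields
\[
n(v^{10},v^8) \;=\; n(v^8,v^0) \;=\; n(v^2,v^0) \;=\; n(v^{10},v^2) \;=\; 1,
\]
so (using the symmetry $n(v^s,v^t)=n(v^t,v^s)$) these four vertices form a 4-cycle. A quick $2$-abacus check shows the associated partitions $(10),(9,1),(5,5),(6,4)$ all have empty $2$-core and hence lie in the principal block of $\mathbb{F}G_{10}$. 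The corresponding block algebra of $S(2,10)$ therefore has $\mathsf{Q}_1$ as a subquiver and is $\tau$-tilting infinite by Lemma \ref{tau-tilting infinite quiver} together with Proposition \ref{quotient and idempotent}.

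The odd base case $S(2,21)$ is handled by exploiting the halving clause of the $p=2$ recursion in Lemma \ref{EH-method}: when $s,t$ are both odd, one has $n(v^s,v^t) = n(v^{(s-1)/2},v^{(t-1)/2})$. Applied to $s\in\{21,17,5,1\}$, this reduces to the $\{10,8,2,0\}$ computation above and produces exactly the same 4-cycle, now on the vertices corresponding to the partitions $(21),(19,2),(13,8),(11,10)$. An abacus computation confirms that these four partitions all have $2$-core $(1)$ and hence lie in a single block of $\mathbb{F}G_{21}$, so the corresponding block algebra of $S(2,21)$ contains $\mathsf{Q}_1$ as a subquiver and is $\tau$-tilting infinite.

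The main subtlety, rather than a genuine obstacle, is that Lemma \ref{EH-method} is blind to the block decomposition of $\mathbb{F}G_r$: after locating a 4-cycle of arrows one must separately verify that the four Young modules sit in a common block. This is a purely routine $2$-core (equivalently, $2$-abacus) check; the recursion in Lemma \ref{EH-method} itself is mechanical to unwind, and once both base cases are in hand, Lemma \ref{n+r} immediately cascades the conclusion to every larger $r$ of the same parity.
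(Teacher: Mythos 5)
Your proposal is correct and takes essentially the same approach as the paper: the paper likewise uses Lemma \ref{EH-method} to exhibit the $4$-cycle $\mathsf{Q}_1$ on the vertices corresponding to $(10),(9,1),(5,5),(6,4)$ in $S(2,10)$ and to $(21),(19,2),(13,8),(11,10)$ in $S(2,21)$, then concludes via Lemma \ref{tau-tilting infinite quiver} and propagates with Lemma \ref{n+r}. Your explicit $2$-core verification that the four partitions lie in a single block is a detail the paper leaves implicit, but it is correct and only strengthens the argument.
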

\begin{proof}
We denote by $\overline{S(2,r)}$ the basic algebra of $S(2,r)$ and we use Lemma \ref{EH-method} to determine the quiver of $\overline{S(2,r)}$. When we display the quiver of $\overline{S(2,r)}$, we usually replace each vertex by the partition $\lambda$ associated with the Young module $Y^\lambda$. Then, the quiver of $\overline{S(2,10)}$ is
\begin{center}
$\xymatrix@C=1cm@R=0.7cm{ &&(6,4) \ar@<0.5ex>[d]^{\ }\ar@<0.5ex>[r]^{\ }&(10) \ar@<0.5ex>[l]^{\ }\ar@<0.5ex>[d]^{\ }\\ (8,2)\ar@<0.5ex>[r]^{\ } &(7,3)\ar@<0.5ex>[r]^{\ }\ar@<0.5ex>[l]^{\ } &(5^2) \ar@<0.5ex>[u]^{\ }\ar@<0.5ex>[l]^{\ }\ar@<0.5ex>[r]^{\ }&(9,1) \ar@<0.5ex>[u]^{\ }\ar@<0.5ex>[l]^{\ }}$,
\end{center}
and the quiver of $\overline{S(2,21)}$ is
\begin{center}
$\xymatrix@C=1cm@R=0.7cm{(17,4)\ar@<0.5ex>[d]^{\ }&(13,8) \ar@<0.5ex>[d]^{\ }\ar@<0.5ex>[r]^{\ }&(21) \ar@<0.5ex>[l]^{\ }\ar@<0.5ex>[d]^{\ }&(14,7)&&(18,3)\\ (15,6)\ar@<0.5ex>[r]^{\ }\ar@<0.5ex>[u]^{\ } &(11,10) \ar@<0.5ex>[u]^{\ }\ar@<0.5ex>[l]^{\ }\ar@<0.5ex>[r]^{\ }&(19,2) \ar@<0.5ex>[u]^{\ }\ar@<0.5ex>[l]^{\ }&(20,1)\ar@<0.5ex>[r]^{\ }&(12,9)\ar@<0.5ex>[l]^{\ }\ar@<0.5ex>[r]^{\ }&(16,5)\ar@<0.5ex>[l]^{\ }}$.
\end{center}
Now, it is enough to say that $S(2,10)$ and $S(2,21)$ are $\tau$-tilting infinite by Lemma \ref{tau-tilting infinite quiver}. Hence, the statement follows from Lemma \ref{n+r}.
\end{proof}

\begin{proposition}
Let $p=2$.
\begin{description}\setlength{\itemsep}{-3pt}
  \item[(1)] The wild Schur algebra $S(3,5)$ is $\tau$-tilting finite.
  \item[(2)] The Schur algebra $S(n,r)$ is $\tau$-tilting infinite for any $n\geqslant 3$ and $r\geqslant 6$.
\end{description}
\end{proposition}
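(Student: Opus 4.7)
The plan is to handle the two parts by explicit computation with the basic algebra of the relevant Schur algebras, supplemented by the reduction theorems already proved.

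For part (1), I would first compute the basic algebra of $S(3,5)$. The partitions of $5$ with at most three parts are $(5),(4,1),(3,2),(3,1,1),(2,2,1)$, and using the decomposition matrix of $\mathbb{F}G_5$ in characteristic $2$ together with Theorem \ref{henke charac} for the two-part partitions, one can write down the ordinary characters $\mathsf{ch}\,Y^\lambda$. The three-part Young modules $Y^{(3,1,1)}$ and $Y^{(2,2,1)}$ need to be read off from the Specht filtration together with self-duality, exactly as was done in Example \ref{s(2,11)} and the proof of Proposition \ref{S(2,13)}. After separating blocks by $2$-core and passing to the Gabriel presentation $\mathbb{F}Q/I$, I would identify the non-trivial central elements lying in $\mathsf{rad}$ and invoke Proposition \ref{center} to replace each block by a smaller quotient, which should either coincide with one of the algebras $\mathcal{A}_m,\mathcal{D}_3,\mathcal{D}_4,\mathcal{R}_4,\mathcal{H}_4,\mathcal{K}_4$ already shown to be $\tau$-tilting finite, or be small enough that iterated left mutation starting from the algebra itself yields a finite connected component of $\mathcal{H}(\mathsf{s\tau\text{-}tilt}\,-)$, as in Lemma \ref{method of D_4}.

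For part (2), the strategy is a double reduction. By Lemma \ref{N>n}, if $S(3,r)$ is $\tau$-tilting infinite, so is $S(n,r)$ for every $n\geqslant 3$; and by Lemma \ref{n+r}, once $S(3,r)$ is $\tau$-tilting infinite, so is $S(3,r+3)$. Since the residues $6,7,8$ exhaust the congruence classes mod $3$, it suffices to prove that $S(3,r)$ is $\tau$-tilting infinite for $r=6,7$ and $8$. For each of these three cases, I would produce the block decomposition of the basic algebra as above, compute just enough of the Gabriel quiver (the arrows between two Young modules $Y^\lambda,Y^\mu$ can be read off by counting the contributions of Specht sections and their decomposition factors), and then exhibit one of the $\tau$-tilting infinite quivers $\mathsf{Q}_1,\mathsf{Q}_2,\mathsf{Q}_3$ of Lemma \ref{tau-tilting infinite quiver} as a subquiver of some block. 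Combining with Proposition \ref{quotient and idempotent}(1) (applied first to pass to the offending block via an idempotent, then to quotient by $\mathsf{rad}^2$) gives $\tau$-tilting infiniteness.

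The main obstacle is the quiver computation for $n=3$: unlike the two-part case, Henke--Erdmann's recursion of Lemma \ref{EH-method} is unavailable, and Specht filtrations of $Y^\lambda$ for three-part $\lambda$ have to be pinned down individually. Concretely, the hard step is determining the arrows into and out of vertices such as $Y^{(3,1,1)}$ and $Y^{(2,2,1)}$ for $r=6$, and their analogues for $r=7,8$: these are the vertices that allow the extra connections making one of $\mathsf{Q}_1,\mathsf{Q}_2,\mathsf{Q}_3$ appear. I expect that the radical-layer information derivable from the $p=2$ decomposition matrices in \cite{James-symmetric}, together with self-duality of Young modules and the fact that $[S^\lambda:D^\mu]\neq 0$ forces $\mu\trianglerighteq\lambda$, is enough to fix the quiver completely without needing to specify relations. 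For the finiteness half of the theorem (part (1)), the delicate piece is instead the mutation bookkeeping after the central-element reduction, which I would carry out by the support-rank stratification $\#\mathsf{s\tau\text{-}tilt}\,A=\sum_s a_s(A)$ used in the proof of Lemma \ref{method of D_4}.
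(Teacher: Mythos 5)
Your proposal follows essentially the same route as the paper: for (1) it identifies the basic algebra of $S(3,5)$ (the paper cites \cite[Proposition 5.8]{Erdmann-finite} to get the presentation $\mathcal{U}_4\oplus\mathbb{F}$ rather than rederiving it), kills central elements via Proposition \ref{center}, and counts support $\tau$-tilting modules by the support-rank stratification; for (2) it reduces to $r=6,7,8$ via Lemmas \ref{N>n} and \ref{n+r} and exhibits one of $\mathsf{Q}_1,\mathsf{Q}_2,\mathsf{Q}_3$ in the quiver of the principal block. The only divergence is that the paper outsources the three-part quiver computations to the Carlson--Matthews program rather than extracting them by hand from Specht filtrations and self-duality, but you correctly identify this as the laborious step and the tools needed for it.
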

\begin{proof}
We consider the Young modules $Y^\lambda$ for partitions $\lambda$ with at most three parts. Then, Specht modules $S^\mu$ in the Specht filtration of $Y^\lambda$ and composition factors $D^\mu$ which appear in $Y^\lambda$ are also corresponding to the partitions with at most three parts.

(1) We show that the basic algebra of $S(3,5)$ is $\tau$-tilting finite. The group algebra $\mathbb{F}G_{5}$ contains only two blocks, i.e., the principal block $B_1$ and the block $B_2$ labeled by $2$-core $(2,1)$. The parts of the decomposition matrix $[S^\lambda:D^\mu]$ for the partitions in $B_1$ and $B_2$ with at most three parts are as follows.
\begin{center}
$B_1: \begin{matrix}
(5)\\
(3,2)\\
(3,1^2)\\
(2^2,1)\\
\end{matrix}\begin{pmatrix}
1&\\
1&1\\
2&1\\
1&1
\end{pmatrix}$, $B_2: \begin{matrix}
(4,1)
\end{matrix}\begin{pmatrix}
1
\end{pmatrix}$.
\end{center}
Combining with \cite[Proposition 5.8]{Erdmann-finite}, the basic algebra of $S(3,5)$ is isomorphic to $\mathcal{U}_4\oplus \mathbb{F}$, where $\mathcal{U}_4:=\mathbb{F}Q/I$ is presented by
\begin{equation}
Q: \xymatrix@C=1cm{\circ\ar@<0.5ex>[r]^{\alpha_1}&\circ\ar@<0.5ex>[l]^{\beta_1}\ar@<0.5ex>[r]^{\alpha_2}&\circ\ar@<0.5ex>[l]^{\beta_2}\ar@<0.5ex>[r]^{\alpha_3}&
\circ\ar@<0.5ex>[l]^{\beta_3}}\ \text{and}\ I: \left \langle \begin{matrix}
\alpha_1\beta_1, \alpha_2\beta_2, \alpha_1\alpha_2\alpha_3,\beta_3\beta_2\beta_1,\alpha_3\beta_3-\beta_2\alpha_2
\end{matrix}\right \rangle.
\end{equation}
Since $\beta_2\alpha_2+\beta_3\alpha_3$ and $\beta_2\beta_1\alpha_1\alpha_2$ are non-trivial central elements of $\mathcal{U}_4$, the number of support $\tau$-tilting modules of $\mathcal{U}_4$ is the same as $\mathcal{U}_4/\left \langle \beta_2\alpha_2, \beta_3\alpha_3,\beta_2\beta_1\alpha_1\alpha_2\right \rangle$ by Proposition \ref{center}. Then, by direct calculation, we have
\renewcommand\arraystretch{1.2}
\begin{center}
\begin{tabular}{c|ccccc||ccccccc}
$s$&$0$&$1$&$2$&$3$&$4$&$\#\mathsf{s\tau\text{-}tilt}\ \mathcal{U}_4$  \\ \hline
$a_s(\mathcal{U}_4)$&1&4&12&36&83&136 \\
\end{tabular}.
\end{center}
We can also verify the number 136 by the String Applet \cite{G-string applet}.

(2) We shall show that $S(3,6)$, $S(3,7)$ and $S(3,8)$ are $\tau$-tilting infinite. Then, the statement follows from Lemma \ref{N>n} and Lemma \ref{n+r}. As we are already familiar with the strategy of determining the radical series of Young modules and the basic algebras of Schur algebras, we may leave this heavy work to a computer and some mathematicians indeed did. Here, we refer to Carlson and Matthews's program \cite{CM-computer}.

(2.1) Let $B$ be the principal block of $\mathbb{F}G_{6}$, the part of the decomposition matrix $[S^\lambda:D^\mu]$ for the partitions in $B$ with at most three parts is of the form
\begin{center}
$\begin{matrix}
(6)\\
(5,1)\\
(4,2)\\
(4,1^2)\\
(3^2)\\
(2^3)
\end{matrix}\begin{pmatrix}
1&&\\
1&1&\\
1&1&1\\
2&1&1\\
1&0&1\\
1&0&1
\end{pmatrix}$.
\end{center}
Then, the quiver of $S_B=\mathsf{End}_{\mathbb{F}G_{6}}(\underset{\lambda\in B\cap\Omega(3,6)}{ \bigoplus } Y^\lambda)$ is as follows.
\begin{center}
$\xymatrix@C=1.2cm@R=0.7cm{ \circ\ar@<0.5ex>[d]^{\ }\ar@<0.5ex>[r]^{\ } &\circ \ar@<0.5ex>[l]^{\ } \ar@<0.5ex>[d]^{\ }\ar@<0.5ex>[r]^{\ }&\circ  \ar@<0.5ex>[l]^{\ }\ar@<0.5ex>[d]^{\ }\\ \circ \ar@<0.5ex>[r]^{\ } \ar@<0.5ex>[u]^{\ } &\circ  \ar@<0.5ex>[u]^{\ }\ar@<0.5ex>[l]^{\ }\ar@<0.5ex>[r]^{\ }&\circ  \ar@<0.5ex>[u]^{\ }\ar@<0.5ex>[l]^{\ }}$
\end{center}

(2.2) Let $B$ be the principal block of $\mathbb{F}G_{7}$, the part of the decomposition matrix $[S^\lambda:D^\mu]$ for the partitions in $B$ with at most three parts is of the form
\begin{center}
$\begin{matrix}
(7)\\
(5,2)\\
(4,2,1)\\
(5,1^2)\\
(3^2,1)\\
(3,2^2)
\end{matrix}\begin{pmatrix}
1&&\\
0&1&\\
1&1&1\\
1&1&0\\
1&0&1\\
1&0&1
\end{pmatrix}$.
\end{center}
Then, the quiver of $S_B=\mathsf{End}_{\mathbb{F}G_{7}}(\underset{\lambda\in B\cap\Omega(3,7)}{ \bigoplus } Y^\lambda)$ is as follows.
\begin{center}
$\xymatrix@C=1.2cm{ \circ\ar@<0.5ex>[d]^{\ }\ar@<0.5ex>[r]^{\ } &\circ \ar@<0.5ex>[l]^{\ }\ar@<0.5ex>[dr]^{\ } \ar@<0.5ex>[d]^{\ }\ar@<0.5ex>[r]^{\ }&\circ  \ar@<0.5ex>[l]^{\ } \\ \circ \ar@<0.5ex>[r]^{\ } \ar@<0.5ex>[u]^{\ } &\circ  \ar@<0.5ex>[u]^{\ }\ar@<0.5ex>[l]^{\ } &\circ  \ar@<0.5ex>[ul]^{\ }}$
\end{center}

(2.3) Let $B$ be the principal block of $\mathbb{F}G_{8}$, the part of the decomposition matrix $[S^\lambda:D^\mu]$ for the partitions in $B$ with at most three parts is of the form
\begin{center}
$\begin{matrix}
(8)\\
(7,1)\\
(6,2)\\
(5,3)\\
(4,3,1)\\
(6,1^2)\\
(4^2)\\
(4,2^2)\\
(3^2,2)
\end{matrix}\begin{pmatrix}
1&&&&\\
1&1&&&\\
0&1&1&&\\
0&1&1&1\\
2&1&1&1&1\\
1&1&1&0&0\\
0&1&0&1&0\\
2&0&1&0&1\\
2&0&0&0&1
\end{pmatrix}$.
\end{center}
Then, the quiver of $S_B=\mathsf{End}_{\mathbb{F}G_{8}}(\underset{\lambda\in B\cap\Omega(3,8)}{ \bigoplus } Y^\lambda)$ is as follows.
\begin{center}
$\xymatrix@C=0.7cm@R=0.5cm{\circ \ar@<0.5ex>[dd]^{\ }\ar@<0.5ex>[rr]^{\ } &&\circ\ar@<0.5ex>[d]^{\ }\ar@<0.5ex>[ll]^{\ }\ar@<0.5ex>[rr]^{\ }&
&\circ\ar@<0.5ex>[ll]^{\ }\ar@<0.5ex>[dd]^{\ } \\
 &\circ \ar@<0.5ex>[r]^{\ }&\circ\ar@<0.5ex>[d]^{\ }\ar@<0.5ex>[u]^{\ }\ar@<0.5ex>[l]^{\ }&\circ\ar@<0.5ex>[dl]^{\ }& \\
\circ\ar@<0.5ex>[rr]^{\ }\ar@<0.5ex>[uu]^{\ }&& \circ\ar@<0.5ex>[ur]^{\ }\ar@<0.5ex>[u]^{\ }\ar@<0.5ex>[ll]^{\ }\ar@<0.5ex>[rr]^{\ }& &\circ\ar@<0.5ex>[ll]^{\ }\ar@<0.5ex>[uu]^{\ }}$
\end{center}
By Lemma \ref{tau-tilting infinite quiver}, we conclude that $S(3,6)$, $S(3,7)$ and $S(3,8)$ are $\tau$-tilting infinite.
\end{proof}

\begin{corollary}
Let $p=2$. The wild Schur algebra $S(4,5)$ is $\tau$-tilting finite.
\end{corollary}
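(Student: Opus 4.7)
The plan is to determine the basic algebra of $S(4,5)$ over $p=2$ blockwise, extending the proof of the previous proposition on $S(3,5)$. Since $\Omega(4,5)=\Omega(3,5)\cup\{(2,1^3)\}$, the only new Young module to analyze is $Y^{(2,1^3)}$. The group algebra $\mathbb{F}G_5$ has two $2$-blocks: the principal block $B_1$ with $2$-core $(1)$ and weight $2$, containing the partitions $(5),(3,2),(3,1^2),(2^2,1),(1^5)$, and the block $B_2$ with $2$-core $(2,1)$ and weight $1$, containing $(4,1)$ and $(2,1^3)$. Within $B_1$, the only partition with more than three parts is $(1^5)$, which lies outside $\Omega(4,5)$, so $B_1\cap\Omega(4,5)=B_1\cap\Omega(3,5)$. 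Hence the $B_1$-contribution $S_{B_1}$ of $S(4,5)$ is unchanged from that of $S(3,5)$; it is isomorphic to the $\tau$-tilting finite algebra $\mathcal{U}_4$ by the previous proposition.

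It remains to analyze $S_{B_2}=\mathsf{End}_{\mathbb{F}G_5}(Y^{(4,1)}\oplus Y^{(2,1^3)})$. Since $B_2$ has weight $1$, its defect group is cyclic of order $2$, and by the classical theory of blocks with cyclic defect $B_2$ is Morita equivalent to $\mathbb{F}[x]/(x^2)$. In particular, $B_2$ has exactly two indecomposable modules up to isomorphism: the simple $D^{(4,1)}$ and its projective cover $P(D^{(4,1)})$. The permutation module $M^{(4,1)}$ has character $\chi^{(5)}+\chi^{(4,1)}$, whose two constituents lie in different blocks, yielding the decomposition $M^{(4,1)}=Y^{(5)}\oplus Y^{(4,1)}$; combined with $\dim M^{(4,1)}=5$ and $\dim Y^{(5)}=1$, this forces $\dim Y^{(4,1)}=4=\dim D^{(4,1)}$, so $Y^{(4,1)}\simeq D^{(4,1)}$. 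Since $Y^{(2,1^3)}$ is indecomposable in $B_2$ and non-isomorphic to $Y^{(4,1)}$ (by the uniqueness of the labeling of Young modules in subsection \ref{sub2.1}), we must have $Y^{(2,1^3)}\simeq P(D^{(4,1)})$.

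Consequently $S_{B_2}\simeq\mathsf{End}(D^{(4,1)}\oplus P(D^{(4,1)}))$, the Auslander algebra of $\mathbb{F}[x]/(x^2)$, which is a direct computation to identify with $\mathcal{A}_2$ as defined in subsection \ref{subsection3.1} (a $5$-dimensional Nakayama algebra). Thus the basic algebra of $S(4,5)$ is $\mathcal{U}_4\oplus \mathcal{A}_2$, and both summands are $\tau$-tilting finite, completing the proof. The main subtlety is the identification of $Y^{(2,1^3)}$ with the projective indecomposable of $B_2$; this relies on the weight-one Morita reduction, which narrows the list of indecomposables to exactly two candidates so that a single dimension count on $Y^{(4,1)}$ pins down both Young modules unambiguously.
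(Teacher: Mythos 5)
Your proposal is correct and follows essentially the same route as the paper: the only new partition in $\Omega(4,5)\setminus\Omega(3,5)$ is $(2,1^3)$, lying in the weight-one block with $2$-core $(2,1)$, so the basic algebra becomes $\mathcal{U}_4\oplus\mathcal{A}_2$. The paper states the identification $S_{B_2}\simeq\mathcal{A}_2$ without justification (implicitly via Erdmann's classification of representation-finite blocks as $\mathcal{A}_m$), whereas you supply the cyclic-defect argument explicitly; both are fine.
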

\begin{proof}
We consider the Young modules $Y^\lambda$ for partitions $\lambda$ of $5$ with at most four parts. Note that $S(3,5)$ is an idempotent truncation of $S(4,5)$ as we mentioned in Lemma \ref{N>n}. Compared with the case $S(3,5)$, the case $S(4,5)$ has only one additional partition $(2,1^3)$ which appears in the block of $\mathbb{F}G_{5}$ labeled by $2$-core (2,1). Then, the basic algebra of $S(4,5)$ is isomorphic to $\mathcal{U}_4\oplus \mathcal{A}_2$ based on the result of $S(3,5)$.
\end{proof}

\begin{proposition}
Let $p=2$. The Schur algebra $S(n,4)$ is $\tau$-tilting infinite for any $n\geqslant 4$.
\end{proposition}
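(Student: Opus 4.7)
The plan is to first invoke Lemma \ref{N>n} to reduce everything to a single algebra: if $S(4,4)$ is $\tau$-tilting infinite, then so is $S(N,4)$ for every $N>4$, since the basic algebra of $S(4,4)$ is then an idempotent truncation of the basic algebra of $S(N,4)$. So I would concentrate the entire argument on proving that $S(4,4)$ is $\tau$-tilting infinite.

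For $S(4,4)$, I consider Young modules $Y^\lambda$ for partitions of $4$ with at most $4$ parts, namely $(4), (3,1), (2^2), (2,1^2), (1^4)$. At $p=2$ all five have empty $2$-core, so they all lie in the principal block $B$ of $\mathbb{F}G_4$, and the basic algebra of $S(4,4)$ equals $S_B=\mathsf{End}_{\mathbb{F}G_4}\bigl(\bigoplus_{\lambda} Y^\lambda\bigr)$ with the sum over these five partitions. The simple $S(4,4)$-modules are indexed by these five partitions, so the Gabriel quiver $Q$ of $S_B$ has exactly five vertices.

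My next step is to determine $Q$. Henke's formula (Theorem \ref{henke charac}) is restricted to partitions with at most two parts, so it does not cover $(2,1^2)$ or $(1^4)$. Instead, following exactly the strategy used in the preceding proposition for $S(3,r)$, I would appeal to Carlson and Matthews' program \cite{CM-computer} to read off the radical series of each Young $\mathbb{F}G_4$-module and hence the quiver of $S_B$. Once $Q$ is in hand, I would exhibit inside it a subquiver isomorphic to one of $\mathsf{Q}_1,\mathsf{Q}_2,\mathsf{Q}_3$ from Lemma \ref{tau-tilting infinite quiver}. By Lemma \ref{tau-tilting infinite quiver} together with Proposition \ref{quotient and idempotent}, this implies $S_B$ is $\tau$-tilting infinite, hence so is $S(4,4)$, and Lemma \ref{N>n} then finishes the proof for all $n\geqslant 4$.

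The main obstacle is the verification of the quiver $Q$ itself. Three of the five partitions, $(2^2),(2,1^2),(1^4)$, are $2$-singular, so their associated Young modules cannot be read off from the clean Henke character formula; instead one must work with the decomposition matrix of $\mathbb{F}G_4$ at $p=2$, the Specht filtration of each $Y^\lambda$, and the self-duality $Y^\lambda\simeq D(Y^\lambda)$, or simply quote Carlson--Matthews as a black box. Once this computation is performed, identifying a $\tau$-tilting infinite subquiver is the easy part: with five vertices in the principal block of a genuinely wild Schur algebra, $Q$ will be dense enough that at least one of $\mathsf{Q}_1,\mathsf{Q}_2,\mathsf{Q}_3$ appears, completely analogously to how $S(3,r)$ for $r=6,7,8$ was treated in the preceding proposition.
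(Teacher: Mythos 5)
Your overall route is the one the paper takes: reduce to $S(4,4)$, determine the quiver of its basic algebra, and locate one of the quivers of Lemma \ref{tau-tilting infinite quiver} inside it. (The paper performs the reduction by noting that $S(n,4)$ with $n\geqslant 5$ is Morita equivalent to $S(4,4)$, as stated just before Lemma \ref{N>n}; your appeal to Lemma \ref{N>n} works equally well.) The paper does not recompute the quiver from Young modules at all: it simply displays the five-vertex quiver of the basic algebra of $S(4,4)$ and cites Xi \cite{Xi-schur} for it, and that quiver visibly contains $\mathsf{Q}_1$ as the square formed by four of its five vertices. Your accounting of the five partitions of $4$, their common empty $2$-core, and the single block is correct.

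The one step you should not leave as stated is the closing sentence. The claim that the quiver of a ``genuinely wild'' Schur algebra with five vertices ``will be dense enough'' to contain one of $\mathsf{Q}_1,\mathsf{Q}_2,\mathsf{Q}_3$ is a heuristic, not an argument, and this very paper exhibits its failure: $S(3,4)$ over $p=2$ is wild, yet the quiver of its basic algebra $\mathcal{M}_4$ contains none of $\mathsf{Q}_1,\mathsf{Q}_2,\mathsf{Q}_3$, which is precisely why it sits in the undecided list $(\star)$ and is conjectured to be $\tau$-tilting \emph{finite}. So the computation you defer to Carlson--Matthews (or to Xi) is not optional bookkeeping; it is the entire content of the proof. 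Once it is carried out, the quiver of $S(4,4)$ does contain $\mathsf{Q}_1$, and your argument closes exactly as the paper's does.
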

\begin{proof}
By our strategy in subsection \ref{sub2.1}, one can see that $S(n,4)$ with $n\geqslant 5$ is always Morita equivalent to $S(4,4)$. So, it is enough to show that $S(4,4)$ is $\tau$-tilting infinite. In fact, the quiver of the basic algebra of $S(4,4)$ displayed below implies our statement.
\begin{center}
$\xymatrix@C=1cm@R=0.7cm{ &\circ \ar@<0.5ex>[d]^{}\ar@<0.5ex>[r]^{}&\circ  \ar@<0.5ex>[l]^{}\ar@<0.5ex>[d]^{}\\ \circ \ar@<0.5ex>[r]^{} &\circ  \ar@<0.5ex>[u]^{}\ar@<0.5ex>[l]^{}\ar@<0.5ex>[r]^{ }&\circ  \ar@<0.5ex>[u]^{ }\ar@<0.5ex>[l]^{}}$
\end{center}
This quiver has been given by Xi in \cite{Xi-schur}.
\end{proof}

Now, we have already determined the $\tau$-tilting finiteness of wild Schur algebras over $p=2$, except for $S(2,r)$ with $r=8,17,19$, $S(3,4)$ and $S(n,5)$ with $n\geqslant 5$.

\begin{remark}\label{S(5,5) over p=2}
Let $p=2$. The Schur algebra $S(n,5)$ with $n\geqslant 6$ is always Morita equivalent to $S(5,5)$. Moreover, the basic algebra of $S(5,5)$ is isomorphic to $\mathcal{N}_5\oplus \mathcal{A}_2$ following \cite[Proposition 3.8]{Xi-schur}, where $\mathcal{N}_5:=\mathbb{F}Q/I$ is presented by
\begin{equation}
\begin{aligned}
\ & \ \ \ \ \ \ \ \  \ \ \ Q: \xymatrix@C=1cm@R=0.3cm{\circ\ar@<0.5ex>[r]^{\alpha_1}&\circ\ar@<0.5ex>[l]^{\beta_1}\ar@<0.5ex>[r]^{\alpha_2}&\circ\ar@<0.5ex>[l]^{\beta_2}\ar@<0.5ex>[r]^{\alpha_3}&
\circ\ar@<0.5ex>[l]^{\beta_3}\ar@<0.5ex>[r]^{\alpha_4}&\circ\ar@<0.5ex>[l]^{\beta_4}} \ \text{with}\\ I:& \left \langle \begin{matrix}
\alpha_1\beta_1, \alpha_2\beta_2, \alpha_3\beta_3, \beta_4\alpha_4,\alpha_1\alpha_2\alpha_3\alpha_4, \beta_4\beta_3\beta_2\beta_1, \beta_2\alpha_2-\alpha_3\alpha_4\beta_4\beta_3, \\
\alpha_2\alpha_3\alpha_4\beta_4-\beta_1\alpha_1\alpha_2\alpha_3, \beta_3\beta_2\beta_1\alpha_1-\alpha_4\beta_4\beta_3\beta_2
\end{matrix}\right \rangle.
\end{aligned}
\end{equation}
\end{remark}

\subsection{The characteristic $p=3$}
We assume in this subsection that the characteristic of $\mathbb{F}$ is 3. Then, the $\tau$-tilting finiteness for $S(n,r)$ is shown in Table \ref{p=3} and the proof is divided into the propositions displayed below. Here, we use the same conventions with Table \ref{p=2}.
\begin{table}
\caption{The $\tau$-tilting finite $S(n,r)$ over $p=3$.}\label{p=3}
\centering
\renewcommand\arraystretch{1.5}
\begin{tabular}{|c|c|c|c|c|c|c|c|c|c|c|c|c|c|c}
\hline
\diagbox{$n$}{$r$}&1&2&3&4&5&6&7&8&9&10&11&12&13&$\cdots$\\ \hline
2&\cellcolor{myblue}S&\cellcolor{myblue}S&\cellcolor{myblue}F&\cellcolor{myblue}F&\cellcolor{myblue}F&\cellcolor{myblue}F&\cellcolor{myblue}F&\cellcolor{myblue}F&
\cellcolor{myblue}T&\cellcolor{myblue}T&\cellcolor{myblue}T&\cellcolor{myred}W&\cellcolor{myred}W&\cellcolor{myred}$\cdots$\\ \hline
3&\cellcolor{myblue}S&\cellcolor{myblue}S&\cellcolor{myblue}F&\cellcolor{myblue}F&\cellcolor{myblue}F&\cellcolor{myred}W&\cellcolor{myblue}T&
\cellcolor{myblue}T&\cellcolor{myred}W&\cellcolor{myred}W&\cellcolor{myred}W&\cellcolor{myred}W&\cellcolor{myred}W&\cellcolor{myred}$\cdots$ \\ \hline
4&\cellcolor{myblue}S&\cellcolor{myblue}S&\cellcolor{myblue}F&\cellcolor{myblue}F&\cellcolor{myblue}F&\cellcolor{myred}W&\cellcolor{myred}W&\cellcolor{myred}W&\cellcolor{myred}W&\cellcolor{myred}W&\cellcolor{myred}W&\cellcolor{myred}W&
\cellcolor{myred}W&\cellcolor{myred}$\cdots$ \\ \hline
5&\cellcolor{myblue}S&\cellcolor{myblue}S&\cellcolor{myblue}F&\cellcolor{myblue}F&\cellcolor{myblue}F&\cellcolor{myred}W&\cellcolor{myred}W&\cellcolor{myred}W&\cellcolor{myred}W&\cellcolor{myred}W&\cellcolor{myred}W&\cellcolor{myred}W&
\cellcolor{myred}W&\cellcolor{myred}$\cdots$ \\ \hline
$\vdots$&\cellcolor{myblue}$\vdots$&\cellcolor{myblue}$\vdots$&\cellcolor{myblue}$\vdots$&\cellcolor{myblue}$\vdots$&\cellcolor{myblue}$\vdots$&\cellcolor{myred}$\vdots$&\cellcolor{myred}$\vdots$&\cellcolor{myred}$\vdots$&\cellcolor{myred}$\vdots$&\cellcolor{myred}$\vdots$&
\cellcolor{myred}$\vdots$&\cellcolor{myred}$\vdots$&\cellcolor{myred}$\vdots$&\cellcolor{myred}$\ddots$
\end{tabular}
\end{table}

\begin{proposition}
Let $p=3$. The Schur algebra $S(2,r)$ is $\tau$-tilting infinite for $r\geqslant 12$.
\end{proposition}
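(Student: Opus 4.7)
The plan is to reduce the claim to two base cases via Lemma \ref{n+r}, then verify each by a direct quiver computation using the Erdmann--Henke recursive algorithm (Lemma \ref{EH-method}) and the three $\tau$-tilting infinite quivers in Lemma \ref{tau-tilting infinite quiver}. Since Lemma \ref{n+r} gives that $\tau$-tilting infiniteness of $S(2,r)$ propagates to $S(2,r+2)$, and every $r\geqslant 12$ has the parity of either $12$ or $13$, it suffices to prove the statement for $r=12$ and $r=13$.

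For each of these two values of $r$, I would determine the Gabriel quiver of the basic algebra of $S(2,r)$ by enumerating the vertices $v^s$, labelled by the differences $s=\lambda_1-\lambda_2$ as $\lambda=(\lambda_1,\lambda_2)$ runs over $\Omega(2,r)$, and then computing the numbers $n(v^s,v^t)$ via the recursion in Lemma \ref{EH-method}(2). Concretely, the vertex set is $s\in\{0,2,4,6,8,10,12\}$ for $r=12$ and $s\in\{1,3,5,7,9,11,13\}$ for $r=13$. The required inputs are the $3$-adic expansions $s=s_0+3s'$, which I would record in a small table, and then run through each pair $(s,t)$ to decide whether the first branch ($s_0=t_0$, reducing to $n(v^{s'},v^{t'})$) or the second branch ($s_0+t_0=1$ with $t'+1=s'\not\equiv 0\ \mathrm{mod}\ 3$) applies.

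Once the quiver is in hand, I expect to find a four-cycle that realises the quiver $\mathsf{Q}_1$ of Lemma \ref{tau-tilting infinite quiver} as a subquiver. Concretely, for $r=12$ the vertices $\{12,10,4,0\}$, corresponding to the Young modules $Y^{(12)},Y^{(11,1)},Y^{(8,4)},Y^{(6,6)}$, should form the cycle
\[
\xymatrix@C=1cm{v^{12}\ar@<0.5ex>[r]\ar@<0.5ex>[d] & v^{10}\ar@<0.5ex>[l]\ar@<0.5ex>[d]\\ v^{0}\ar@<0.5ex>[u]\ar@<0.5ex>[r] & v^{4}\ar@<0.5ex>[u]\ar@<0.5ex>[l]}
\]
and for $r=13$ the vertices $\{13,9,3,1\}$, corresponding to $Y^{(13)},Y^{(11,2)},Y^{(8,5)},Y^{(7,6)}$, should form an analogous cycle. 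Invoking Lemma \ref{tau-tilting infinite quiver} together with the quotient part of Proposition \ref{quotient and idempotent} applied to the ideal of relations that turn the basic algebra into a radical-square-zero algebra on this subquiver then delivers $\tau$-tilting infiniteness of the corresponding block of $S(2,r)$, hence of $S(2,r)$ itself.

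The only obstacle is bookkeeping: one must evaluate Lemma \ref{EH-method}(2) at roughly $\binom{7}{2}=21$ pairs per base case, keeping track of the $3$-adic coordinates and the mod $3$ condition on $s'$. No conceptual step is hard, so the main risk is a sign-of-life slip in the recursion; I would guard against it by double-checking the symmetry $n(v^s,v^t)=n(v^t,v^s)$ and the edges incident to the cycle separately.
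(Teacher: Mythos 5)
Your proposal is correct and follows essentially the same route as the paper: reduce to the base cases $r=12,13$ via Lemma \ref{n+r}, compute the quiver of the principal block with Lemma \ref{EH-method}, and exhibit $\mathsf{Q}_1$ on the vertices $s\in\{12,10,4,0\}$ (resp. $\{13,9,3,1\}$), which matches the paper's cycle on $(12),(11,1),(8,4),(6^2)$ for $r=12$. The only minor imprecision is that passing from the subquiver to $\tau$-tilting infiniteness uses the idempotent-truncation part of Proposition \ref{quotient and idempotent} as well as the quotient part, but this is exactly the standard reduction the paper itself invokes.
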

\begin{proof}
We show that both $S(2,12)$ and $S(2,13)$ are $\tau$-tilting infinite. Then, the statement follows from Lemma \ref{n+r}. In fact, let $B$ be the principal block of $\mathbb{F}G_{12}$ and the quiver of $S_B=\mathsf{End}_{\mathbb{F}G_{12}}(\underset{\lambda\in B\cap\Omega(2,12)}{ \bigoplus } Y^\lambda)$ following Lemma \ref{EH-method} is
\begin{center}
$\xymatrix@C=1cm@R=0.7cm{ &(11,1) \ar@<0.5ex>[d]^{\ }\ar@<0.5ex>[r]^{\ }&(12) \ar@<0.5ex>[l]^{\ }\ar@<0.5ex>[d]^{\ }\\ (9,3)\ar@<0.5ex>[r]^{\ } &(8,4) \ar@<0.5ex>[u]^{\ }\ar@<0.5ex>[l]^{\ }\ar@<0.5ex>[r]^{\ }&(6^2) \ar@<0.5ex>[u]^{\ }\ar@<0.5ex>[l]^{\ }}$,
\end{center}
where we replace each vertex by the partition $\lambda$ associated with $Y^\lambda$. Thus, $S(2,12)$ is $\tau$-tilting infinite by Lemma \ref{tau-tilting infinite quiver}. One can check that $S(2,13)$ also contains a $\tau$-tilting infinite subquiver as shown above.
\end{proof}

In the following, we refer to \cite{CM-computer} for the quiver of $S_B$ without further notice.

\begin{proposition}
Let $p=3$. Then, $S(3,6)$ and $S(3,r)$ with $r\geqslant 9$ are $\tau$-tilting infinite.
\end{proposition}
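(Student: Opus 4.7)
The plan is to mimic the strategy already used in the $p=2$ section: for a minimal set of base values $r$ we exhibit an explicit block $S_B$ of the basic algebra of $S(3,r)$ whose Gabriel quiver contains one of the $\tau$-tilting infinite quivers $\mathsf{Q}_1$, $\mathsf{Q}_2$, $\mathsf{Q}_3$ from Lemma \ref{tau-tilting infinite quiver}, and then propagate to the remaining values of $r$ via Lemma \ref{n+r}. Since Lemma \ref{n+r} lifts $\tau$-tilting infiniteness from $S(3,r)$ to $S(3,r+3)$, the three arithmetic progressions
\[
6 \to 9 \to 12 \to 15 \to \cdots, \qquad 10 \to 13 \to 16 \to \cdots, \qquad 11 \to 14 \to 17 \to \cdots
\]
together cover $r = 6$ and every $r \geqslant 9$. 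It therefore suffices to verify $\tau$-tilting infiniteness of $S(3,6)$, $S(3,10)$ and $S(3,11)$ directly (and for transparency one may as well treat $S(3,9)$ as well).

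For each such $r$, I would decompose $\mathbb{F}G_r$ into blocks by $3$-core, and for every block $B$ form $S_B = \mathsf{End}_{\mathbb{F}G_r}(\bigoplus_{\lambda \in B \cap \Omega(3,r)} Y^\lambda)$, which is a direct summand of the basic algebra of $S(3,r)$. The Gabriel quivers of these $S_B$ are then read off using the $\mathbb{F}G_r$-decomposition matrices recorded in \cite{James-symmetric} together with the Specht-filtration/self-duality argument illustrated in Example \ref{s(2,11)}; in practice, one may simply invoke the Carlson--Matthews tables \cite{CM-computer}, as was done for $S(3,6)$, $S(3,7)$, $S(3,8)$ over $p=2$. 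For each of $r\in\{6,10,11\}$ the goal is to pick the block $B$ whose set of three-part partitions is large enough so that the resulting quiver contains a copy of $\mathsf{Q}_1$, $\mathsf{Q}_2$ or $\mathsf{Q}_3$ as a subquiver.

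Once such a subquiver has been displayed, Lemma \ref{tau-tilting infinite quiver} combined with Proposition \ref{quotient and idempotent} (passing to $S_B/\mathsf{rad}^2 S_B$ and then to the appropriate subquiver subalgebra) forces $S_B$ to be $\tau$-tilting infinite, hence so is $S(3,r)$. Iterating Lemma \ref{n+r} along the three chains above then finishes the proof.

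The main obstacle is bookkeeping rather than ideas: for $r=10,11$ the set $\Omega(3,r)$ already contains eight to ten partitions, many belonging to one principal $3$-block, and one has to verify that enough arrows $n(v^\lambda,v^\mu)\neq 0$ occur to assemble one of $\mathsf{Q}_1$--$\mathsf{Q}_3$ rather than merely a wider $\mathcal{A}_m$-like chain. Since there is no two-part analogue of Lemma \ref{EH-method} in the three-row setting, this step genuinely relies either on the explicit $\mathbb{F}G_r$-decomposition data or on \cite{CM-computer}, which is exactly the route the paper has already taken in the characteristic $2$ analysis of $S(3,6), S(3,7), S(3,8)$.
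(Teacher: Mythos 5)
Your proposal follows the paper's proof essentially verbatim: the paper likewise reduces to the three base cases $S(3,6)$, $S(3,10)$ and $S(3,11)$ via Lemma \ref{n+r} (exactly your three arithmetic progressions), and for each exhibits a block $S_B$ whose Gabriel quiver, read off from the James decomposition matrices and the Carlson--Matthews tables \cite{CM-computer}, contains one of $\mathsf{Q}_1$, $\mathsf{Q}_2$, $\mathsf{Q}_3$, so that Lemma \ref{tau-tilting infinite quiver} applies. The only substance you leave implicit is the actual display of these quivers --- the paper shows that the principal block of $\mathbb{F}G_{6}$ gives a quiver containing $\mathsf{Q}_2$, while the principal block of $\mathbb{F}G_{10}$ and the $3$-core-$(1^2)$ block of $\mathbb{F}G_{11}$ both give a $2\times 3$ grid containing $\mathsf{Q}_1$ --- which is exactly the bookkeeping you correctly identify as the remaining work.
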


\begin{proof}
It suffices to show that $S(3,6)$, $S(3,10)$ and $S(3,11)$ are $\tau$-tilting infinite.

(1) Let $B$ be the principal block of $\mathbb{F}G_{6}$, the part of the decomposition matrix $[S^\lambda:D^\mu]$ for the partitions in $B$ with at most three parts is of the form
\begin{center}
$\begin{matrix}
(6)\\
(5,1)\\
(4,1^2)\\
(3^2)\\
(3,2,1)\\
(2^3)
\end{matrix}\begin{pmatrix}
1&&&&\\
1&1&&&\\
0&1&1&&\\
0&1&0&1&\\
1&1&1&1&1\\
1&0&0&0&1
\end{pmatrix}$.
\end{center}
Then, the quiver of $S_B=\mathsf{End}_{\mathbb{F}G_{6}}(\underset{\lambda\in B\cap\Omega(3,6)}{ \bigoplus } Y^\lambda)$ is as follows.
\begin{center}
$\xymatrix@C=1cm@R=0.5cm{ &&\circ\ar@<0.5ex>[dl]^{\ }\ar@<0.5ex>[dr]^{\ }&\\
\circ\ar@<0.5ex>[r]^{\ } &\circ \ar@<0.5ex>[dr]^{\ }\ar@<0.5ex>[ur]^{\ }\ar@<0.5ex>[l]^{\ }\ar@<0.5ex>[r]^{\ }&\circ\ar@<0.5ex>[r]^{\ }\ar@<0.5ex>[l]^{\ }&\circ\ar@<0.5ex>[l]^{\ }\ar@<0.5ex>[ul]^{\ }\ar@<0.5ex>[dl]^{\ }\\
&&\circ\ar@<0.5ex>[ur]^{\ }\ar@<0.5ex>[ul]^{\ }&}$
\end{center}

(2) Let $B_1$ be the principal block of $\mathbb{F}G_{10}$ and $B_2$ the block of $\mathbb{F}G_{11}$ labeled by $3$-core $(1^2)$, the parts of the decomposition matrix $[S^\lambda:D^\mu]$ for the partitions in $B_1$ and $B_2$ with at most three parts are of the form
\begin{center}
$B_1: \begin{matrix}
(10)\\
(8,2)\\
(7,3)\\
(7,2,1)\\
(5^2)\\
(4,3^2)
\end{matrix}\begin{pmatrix}
1&&&&&\\
1&1&&&&\\
0&1&1&&&\\
1&1&1&1&&\\
0&0&1&0&1&\\
0&0&1&1&1&1
\end{pmatrix}$, $B_2: \begin{matrix}
(10,1)\\
(9,2)\\
(7,4)\\
(7,2^2)\\
(6,5)\\
(4^2,3)
\end{matrix}\begin{pmatrix}
1&&&&&\\
1&1&&&&\\
0&1&1&&&\\
1&1&1&1&&\\
0&0&1&0&1&\\
0&0&1&1&1&1
\end{pmatrix}$.
\end{center}
Then, both the quiver of $S_{B_1}$ and $S_{B_2}$ are as follows.
\begin{center}
$\xymatrix@C=1cm@R=0.7cm{\circ\ar@<0.5ex>[r]^{\ } &\circ\ar@<0.5ex>[l]^{\ }\ar@<0.5ex>[r]^{\ } \ar@<0.5ex>[d]^{\ }   &\circ\ar@<0.5ex>[l]^{\ } \\
\circ\ar@<0.5ex>[r]^{\ } &\circ\ar@<0.5ex>[r]^{\ } \ar@<0.5ex>[l]^{\ } \ar@<0.5ex>[u]^{\ } &\circ\ar@<0.5ex>[l]^{\ } }$
\end{center}

By Lemma \ref{tau-tilting infinite quiver}, the above two cases are $\tau$-tilting infinite quivers.
\end{proof}

\begin{proposition}
Let $p=3$. Then, $S(n,r)$ is $\tau$-tilting infinite for any $n\geqslant 4$ and $r\geqslant 6$.
\end{proposition}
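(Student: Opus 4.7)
The plan is to reduce to the case $n=4$ and then invoke the two reduction lemmas together with the preceding proposition. By Lemma \ref{N>n}, it suffices to prove that $S(4,r)$ is $\tau$-tilting infinite for every $r\geqslant 6$, since the cases $n\geqslant 5$ follow at once.

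From the preceding proposition, $S(3,6)$ and $S(3,r)$ for $r\in\{9,10,11\}$ are $\tau$-tilting infinite, so Lemma \ref{N>n} gives the same conclusion for $S(4,r)$ when $r\in\{6,9,10,11\}$. The two remaining small cases $S(4,7)$ and $S(4,8)$ must be handled directly. For each of these, the strategy is to consider the Young modules $Y^\lambda$ for partitions $\lambda$ of $r$ with at most four parts, use the decomposition matrices from \cite{James-symmetric} at $p=3$ together with Carlson and Matthews's program \cite{CM-computer} to read off the Gabriel quiver of the relevant block algebra of $S(4,r)$, and then exhibit one of $\mathsf{Q}_1$, $\mathsf{Q}_2$ or $\mathsf{Q}_3$ as a subquiver. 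Lemma \ref{tau-tilting infinite quiver} together with Proposition \ref{quotient and idempotent} then yields $\tau$-tilting infiniteness of $S(4,7)$ and $S(4,8)$.

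Once $\tau$-tilting infiniteness is secured for the four consecutive values $r\in\{6,7,8,9\}$, a repeated application of Lemma \ref{n+r} with $n=4$ generates the four arithmetic progressions $\{6,10,14,\ldots\}$, $\{7,11,15,\ldots\}$, $\{8,12,16,\ldots\}$, $\{9,13,17,\ldots\}$, which together exhaust every integer $r\geqslant 6$. Hence $S(4,r)$ is $\tau$-tilting infinite for all $r\geqslant 6$, and a final application of Lemma \ref{N>n} extends this to all $n\geqslant 4$.

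The main obstacle is the explicit verification for $S(4,7)$ and $S(4,8)$. Passing from $n=3$ to $n=4$ introduces new simple modules corresponding to partitions with exactly four parts (for example $(2,2,2,1)$ in the principal block of $\mathbb{F}G_7$ and analogous partitions in $\mathbb{F}G_8$), which enlarge the Gabriel quiver of each block and can alter its shape relative to the $S(3,r)$ quivers computed in the previous proposition. The saving grace is that only the \emph{existence} of a subquiver of one of the three prescribed shapes is required, so arrow multiplicities beyond $\geqslant 1$ need not be pinned down and the relations of the block algebras never enter the argument.
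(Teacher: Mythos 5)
Your reduction to $S(4,7)$ and $S(4,8)$ via Lemma \ref{N>n} and Lemma \ref{n+r} is exactly the paper's argument (spelled out in more detail than the paper's one-line reduction, and correctly noting that $r\in\{6,7,8,9\}$ suffices to seed the four arithmetic progressions), and the method you describe for the two remaining cases --- reading off the block quivers from the $p=3$ decomposition matrices for partitions with at most four parts via \cite{CM-computer} and locating one of $\mathsf{Q}_1,\mathsf{Q}_2,\mathsf{Q}_3$ --- is precisely what the paper does. The only thing you defer is the explicit data: the paper exhibits the decomposition matrix of the principal block of $\mathbb{F}G_7$ (seven partitions with at most four parts, including $(4,1^3)$ and $(2^3,1)$) and of the block of $\mathbb{F}G_8$ with $3$-core $(1^2)$ (five such partitions), and in each case the resulting quiver visibly contains $\mathsf{Q}_1$, so the ``main obstacle'' you identify is resolved exactly as you predict.
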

\begin{proof}
Based on the result of $S(3,r)$, Lemma \ref{N>n} and Lemma \ref{n+r}, it suffices to show that $S(4,7)$ and $S(4,8)$ are $\tau$-tilting infinite.

(1) Let $B$ be the principal block of $\mathbb{F}G_{7}$, the part of the decomposition matrix $[S^\lambda:D^\mu]$ for the partitions in $B$ with at most four parts is of the form
\begin{center}
$\begin{matrix}
(7)\\
(5,2)\\
(4,3)\\
(4,2,1)\\
(3,2,1^2)\\
(4,1^3)\\
(2^3,1)
\end{matrix}\begin{pmatrix}
1&&&&\\
1&1&&&\\
0&1&1&&\\
1&1&1&1&\\
1&0&1&1&1\\
0&0&0&1&0\\
1&0&0&0&1
\end{pmatrix}$.
\end{center}
Then, the quiver of $S_B=\mathsf{End}_{\mathbb{F}G_{7}}(\underset{\lambda\in B\cap\Omega(4,7)}{ \bigoplus } Y^\lambda)$ is as follows.
\begin{center}
$\xymatrix@C=0.7cm@R=0.5cm{\circ\ar@<0.5ex>[rrr]^{\ }\ar@<0.5ex>[d]^{\ } &&&\circ\ar@<0.5ex>[lll]^{\ }\ar@<0.5ex>[dll]^{\ }\ar@<0.5ex>[dd]^{\ } \\
\circ\ar@<0.5ex>[u]^{\ }\ar@<0.5ex>[d]^{\ }\ar@<0.5ex>[r]^{\ }&\circ\ar@<0.5ex>[urr]^{\ }\ar@<0.5ex>[drr]^{\ }\ar@<0.5ex>[r]^{\ } \ar@<0.5ex>[l]^{\ } &\circ\ar@<0.5ex>[l]^{\ }&\\
\circ\ar@<0.5ex>[rrr]^{\ }\ar@<0.5ex>[u]^{\ }&&&\circ\ar@<0.5ex>[lll]^{\ }\ar@<0.5ex>[ull]^{\ } \ar@<0.5ex>[uu]^{\ }}$
\end{center}

(2) Let $B$ be the block of $\mathbb{F}G_8$ labeled by $3$-core $(1^2)$, the part of the decomposition matrix $[S^\lambda:D^\mu]$ for the partitions in $B$ with at most four parts is of the form
\begin{center}
$\begin{matrix}
(7,1)\\
(6,2)\\
(4^2)\\
(4,2^2)\\
(3,2^2,1)
\end{matrix}\begin{pmatrix}
1&&&&\\
1&1&&&\\
0&1&1&&\\
1&1&1&1&\\
1&0&0&1&1
\end{pmatrix}$.
\end{center}
Then, the quiver of $S_B=\mathsf{End}_{\mathbb{F}G_{8}}(\underset{\lambda\in B\cap\Omega(4,8)}{ \bigoplus } Y^\lambda)$ is as follows.
\begin{center}
$\xymatrix@C=1cm@R=0.7cm{ &\circ \ar@<0.5ex>[d]^{\ }\ar@<0.5ex>[r]^{\ }&\circ  \ar@<0.5ex>[l]^{\ }\ar@<0.5ex>[d]^{\ }\\ \circ \ar@<0.5ex>[r]^{\ } &\circ  \ar@<0.5ex>[u]^{\ }\ar@<0.5ex>[l]^{\ }\ar@<0.5ex>[r]^{\ }&\circ  \ar@<0.5ex>[u]^{\ }\ar@<0.5ex>[l]^{\ }}$
\end{center}

Obviously, $S(4,7)$ and $S(4,8)$ are $\tau$-tilting infinite.
\end{proof}

Hence, we have determined the $\tau$-tilting finiteness for all the cases over $p=3$.

\subsection{The characteristic $p\geqslant 5$}
The situation on $p\geqslant 5$ is much easier than the situation on $p=2,3$. As shown in Proposition \ref{summary}, tame Schur algebras do not appear in this case. Then, the $\tau$-tilting finiteness for $S(n,r)$ is shown in Table \ref{p=5} and the proof is divided into two propositions. Also, we use the same conventions with Table \ref{p=2}.
\begin{table}
\caption{The $\tau$-tilting finite $S(n,r)$ over $p\geqslant 5$.}\label{p=5}
\centering
\renewcommand\arraystretch{1.5}
\begin{tabular}{|c|c|c|c|c|c|c|c|c|c|c|c|c|c|c}
\hline
\diagbox{$n$}{$r$}&$1\sim p-1$&$p\sim 2p-1$&$2p\sim p^2-1$&$p^2\sim p^2+p-1$&$p^2+p\sim \infty$\\ \hline
2&\cellcolor{myblue}S&\cellcolor{myblue}F&\cellcolor{myblue}F&\cellcolor{mygray}W&\cellcolor{myred}W\\ \hline
3&\cellcolor{myblue}S&\cellcolor{myblue}F&\cellcolor{myred}W&\cellcolor{myred}W&\cellcolor{myred}W\\ \hline
4&\cellcolor{myblue}S&\cellcolor{myblue}F&\cellcolor{myred}W&\cellcolor{myred}W&\cellcolor{myred}W \\ \hline
5&\cellcolor{myblue}S&\cellcolor{myblue}F&\cellcolor{myred}W&\cellcolor{myred}W&\cellcolor{myred}W\\ \hline
$\vdots$&\cellcolor{myblue}$\vdots$&\cellcolor{myblue}$\vdots$&\cellcolor{myred}$\vdots$&\cellcolor{myred}$\vdots$&\cellcolor{myred}$\vdots$
\end{tabular}
\end{table}

\begin{proposition}
Let $p\geqslant 5$. Then, $S(2,r)$ is $\tau$-tilting infinite for any $r\geqslant p^2+p$.
\end{proposition}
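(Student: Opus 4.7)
The overall plan is to reduce, via iterated application of Lemma \ref{n+r} (which upgrades $\tau$-tilting infiniteness from $S(2,r)$ to $S(2,r+2)$), to the two base cases $r_0=p^2+p$ and $r_0=p^2+p+1$. For each of these I would exhibit a four-vertex induced subquiver of type $\mathsf{Q}_1$ inside the Gabriel quiver of a block of the basic algebra of $S(2,r_0)$; then Proposition \ref{quotient and idempotent} combined with Lemma \ref{tau-tilting infinite quiver} forces $S(2,r_0)$ to be $\tau$-tilting infinite, and Lemma \ref{n+r} then propagates this to all $r\geq p^2+p$ of each parity.

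For the even base case $r=p^2+p$, note that since $p$ is odd, $r$ is even, and the vertices of the quiver of the basic algebra of $S(2,r)$ are labelled by $s=\lambda_1-\lambda_2\in\{0,2,\ldots,r\}$. I would focus on the four vertices $s_A=r$, $s_B=r-2$, $s_C=r-2p-2$, and $s_D=r-4p$, whose $p$-adic decompositions $s=s_0+ps_1$ are $(0,p+1)$, $(p-2,p)$, $(p-2,p-2)$, and $(0,p-3)$ respectively (with further expansions $p+1=1+p\cdot 1$ and $p=0+p\cdot 1$ at the next level). A direct application of Lemma \ref{EH-method}(2) verifies the four cyclic edges: the pairs $(A,B)$ and $(C,D)$ satisfy the base-case rule (sum of $s_0$'s equal to $p-2$, difference of $s_1$'s equal to $1$, and $s_1\not\equiv 0\pmod p$), while the pairs $(B,C)$ and $(D,A)$ share a common $s_0$ and the $s_0=t_0$ recursion reduces them to $n(v^p,v^{p-2})=1$ and $n(v^{p-3},v^{p+1})=1$ respectively. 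The two diagonals $(A,C)$ and $(B,D)$ have $s_0$-sum $p-2$ but $s_1$-difference $3$, so they yield no arrow. Hence $\{s_A,s_B,s_C,s_D\}$ carries an induced $\mathsf{Q}_1$ subquiver. For the odd case $r=p^2+p+1$, the analogous four vertices $r,\,r-4,\,r-2p-4,\,r-4p$ with decompositions $(1,p+1),\,(p-3,p),\,(p-3,p-2),\,(1,p-3)$ work in exactly the same way: the same two recursive base cases $n(v^p,v^{p-2})=n(v^{p-3},v^{p+1})=1$ reappear at the second level, and the diagonals again fail by the same $s_1$-difference-equals-$3$ computation.

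The main subtlety to watch, and the step I would double-check most carefully, is ruling out the diagonal arrows: each diagonal pair \emph{does} satisfy the $s_0$-sum condition $s_0+t_0=p-2$, so it is only the $s_1$-difference of $3$ (rather than $1$) that prevents an arrow, and one must maintain the parity constraint forcing each candidate value to share the parity of $r$ throughout the recursion. Once these verifications are in place, the statement follows from Lemma \ref{tau-tilting infinite quiver}, Proposition \ref{quotient and idempotent}, and Lemma \ref{n+r}.
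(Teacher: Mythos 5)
Your proposal is correct and follows essentially the same route as the paper: reduce to the two base cases $r=p^2+p$ and $r=p^2+p+1$ via Lemma \ref{n+r}, and exhibit a $\mathsf{Q}_1$ subquiver via Lemma \ref{EH-method}; your four $s$-values in each case correspond exactly to the paper's four chosen partitions $(p^2+p),(p^2+p-1,1),(p^2-p,2p),(p^2-1,p+1)$ and $(p^2+p+1),(p^2+p-1,2),(p^2-p+1,2p),(p^2-1,p+2)$, and your Erdmann--Henke computations check out. The only remark is that your "main subtlety" (ruling out the diagonals) is not actually needed, since extra arrows would only enlarge the quiver and $\mathsf{Q}_1$ need only occur as a subquiver, not a full one.
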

\begin{proof}
It suffices to consider $S(2,p^2+p)$ and $S(2,p^2+p+1)$ following Lemma \ref{n+r}. To show the $\tau$-tilting finiteness of $S(2,p^2+p)$, we choose four partitions
\begin{center}
$(p^2+p), (p^2+p-1,1), (p^2-p,2p), (p^2-1, p+1)$,
\end{center}
which are contained in the principal block $B$ of $\mathbb{F}G_{p^2+p}$. By Lemma \ref{EH-method}, one may construct the following subquiver of the quiver of $S_B$.
\begin{center}
$\xymatrix@C=1.2cm{ v^{(p^2+p)} \ar@<0.5ex>[d]^{}\ar@<0.5ex>[r]^{}&v^{(p^2+p-1,1)} \ar@<0.5ex>[l]^{}\ar@<0.5ex>[d]^{}\\ v^{(p^2-p,2p)}  \ar@<0.5ex>[u]^{}\ar@<0.5ex>[r]^{ }&v^{(p^2-1, p+1)} \ar@<0.5ex>[u]^{ }\ar@<0.5ex>[l]^{}}$
\end{center}
This is just the $\tau$-tilting infinite quiver $\mathsf{Q}_1$ and therefore, $S(2,p^2+p)$ is $\tau$-tilting infinite.

Moreover, we can show that $S(2,p^2+p+1)$ contains the $\tau$-tilting infinite quiver $\mathsf{Q}_1$ as a subquiver if we choose $(p^2+p+1), (p^2+p-1,2), (p^2-p+1,2p)$ and $(p^2-1, p+2)$.
\end{proof}

\begin{proposition}
Let $p\geqslant 5$. Then, the Schur algebra $S(n,r)$ is $\tau$-tilting infinite for any $n\geqslant 3$ and $r\geqslant 2p$.
\end{proposition}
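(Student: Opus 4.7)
The plan is to combine the two reduction lemmas of Section~\ref{section2} with the subquiver strategy of Lemma~\ref{tau-tilting infinite quiver} in order to cut the problem down to a finite set of base cases. By Lemma~\ref{N>n} it suffices to prove the statement for $n=3$, since $\tau$-tilting infiniteness of $S(3,r)$ propagates to $S(N,r)$ for every $N>3$. Within the family $\{S(3,r):r\geqslant 2p\}$, iterating Lemma~\ref{n+r} (which, with $n=3$, passes $\tau$-tilting infiniteness from $S(3,r)$ to $S(3,r+3)$) reduces the problem further to the three base cases
\[
r=2p,\qquad r=2p+1,\qquad r=2p+2,
\]
since these three residues modulo $3$ together cover all integers $\geqslant 2p$.

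For each of these base cases I would follow the recipe used throughout Section~\ref{section-wild}: fix a block $B$ of $\mathbb{F}G_r$, list the partitions in $B\cap\Omega(3,r)$ together with the relevant portion of the James decomposition matrix $[S^{\lambda}:D^{\mu}]$, determine the characters of the corresponding Young modules (using Theorem~\ref{henke charac} for the two-part partitions and Specht-filtration bookkeeping combined with the self-duality of Young modules for the three-part ones), and finally read off the Gabriel quiver of $S_{B}$ by counting $\dim\mathsf{Hom}_{\mathbb{F}G_r}(Y^{\lambda},Y^{\mu})$ on arrows. The objective is to exhibit one of the three $\tau$-tilting infinite quivers $\mathsf{Q}_1,\mathsf{Q}_2,\mathsf{Q}_3$ as a subquiver of the quiver of $S_B$; once this is done, Proposition~\ref{quotient and idempotent} together with Lemma~\ref{tau-tilting infinite quiver} forces $S_B$, hence the basic algebra of $S(3,r)$, to be $\tau$-tilting infinite.

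A natural candidate for $r=2p$ is the principal block of $\mathbb{F}G_{2p}$, together with the four partitions $(2p)$, $(p+1,p-1)$, $(p,p)$ and $(p,p-1,1)$. The first three already govern part of the quiver of the corresponding block of $S(2,2p)$ via Lemma~\ref{EH-method}, and the three-part partition $(p,p-1,1)$---which is only available because $n\geqslant 3$---is expected to contribute the additional vertex and arrows needed to form a copy of $\mathsf{Q}_1$. Analogous configurations, built by enlarging a suitable two-part skeleton by a three-part partition, can be used for $r=2p+1$ and $r=2p+2$.

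The main obstacle will be to verify these subquiver claims \emph{uniformly} in the prime $p\geqslant 5$, rather than only for any single chosen $p$. The arrows between two-part Young modules are controlled completely by Lemma~\ref{EH-method}, but the contribution of the three-part partition requires a direct analysis of the relevant decomposition numbers and of the extension behaviour of $Y^{(p,p-1,1)}$ and its analogues for $r=2p+1,2p+2$. Carrying out this analysis in a $p$-independent fashion is the delicate step; all other ingredients---the two reductions, the character computation, and the appeal to Lemma~\ref{tau-tilting infinite quiver}---are routine.
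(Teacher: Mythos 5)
Your reduction is exactly the paper's: Lemma \ref{N>n} cuts the problem to $n=3$, and Lemma \ref{n+r} cuts it to the three base cases $r=2p+x$ with $0\leqslant x\leqslant 2$. The gap is in the base cases themselves, where you defer precisely the step that constitutes the entire content of the proposition: exhibiting a $\tau$-tilting infinite subquiver of a block of $S(3,2p+x)$ \emph{uniformly} in $p\geqslant 5$. You flag this as ``the delicate step'' but do not carry it out, and the recipe you propose (Specht-filtration bookkeeping plus self-duality for the three-part Young modules) is exactly the part that is hard to make $p$-independent. The paper avoids this entirely by quoting \cite[Proposition 5.3.1]{Erdmann-finite}, which already gives, uniformly in $p$, the quiver of the principal block $S_B$ on the six partitions $(2p+x)$, $(2p-1,1+x)$, $(p+x,p)$, $(2p-2,1+x,1)$, $(p+x,p-1,1)$, $((p-1)^2,2+x)$; that quiver has a vertex joined by arrows in both directions to four distinct neighbours, hence contains $\mathsf{Q}_2$, and Lemma \ref{tau-tilting infinite quiver} finishes the argument. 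Without such a reference or a genuinely $p$-uniform computation, your proof is incomplete.

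Moreover, the one concrete configuration you do propose for $r=2p$ fails: the partition $(p+1,p-1)$ has $p$-core $(p-2,2)$ (remove a single rim $p$-hook from the first row), so for $p\geqslant 5$ it does \emph{not} lie in the principal block of $\mathbb{F}G_{2p}$, whereas $(2p)$, $(p,p)$ and $(p,p-1,1)$ all have empty $p$-core. Since the basic algebra of $S(3,2p)$ decomposes as a direct sum over blocks, there are no arrows between the vertex of $Y^{(p+1,p-1)}$ and the other three, so these four partitions cannot support a copy of $\mathsf{Q}_1$. Any correct choice must be made inside a single block, as in the paper's list above, and the resulting subquiver turns out to be $\mathsf{Q}_2$ rather than $\mathsf{Q}_1$.
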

\begin{proof}
It suffices to consider $S(3,r)$ for $r=2p+x$ with $0\leqslant x\leqslant 2$. Let $B$ be the principal block of $\mathbb{F}G_{r}$. Then, the part of the decomposition matrix $[S^\lambda:D^\mu]$ for the partitions in $B$ with at most three parts is of the form
\begin{center}
$\begin{matrix}
(2p+x)\\
(2p-1,1+x)\\
(p+x,p)\\
(2p-2,1+x,1)\\
(p+x,p-1,1)\\
((p-1)^2,2+x)
\end{matrix}\begin{pmatrix}
1&&&&\\
1&1&&&\\
0&1&1&&\\
0&1&0&1&\\
1&1&1&1&1\\
1&0&0&0&1&1
\end{pmatrix}$.
\end{center}
We recall from \cite[Proposition 5.3.1]{Erdmann-finite} that the quiver of $S_B$ is
\begin{center}
$\vcenter{\xymatrix@C=1cm@R=0.5cm{ &&\circ\ar@<0.5ex>[dl]^{\ }\ar@<0.5ex>[dr]^{\ }&\\
\circ\ar@<0.5ex>[r]^{\ } &\circ \ar@<0.5ex>[dr]^{\ }\ar@<0.5ex>[ur]^{\ }\ar@<0.5ex>[l]^{\ }\ar@<0.5ex>[r]^{\ }&\circ\ar@<0.5ex>[r]^{\ }\ar@<0.5ex>[l]^{\ }&\circ\ar@<0.5ex>[l]^{\ }\ar@<0.5ex>[ul]^{\ }\ar@<0.5ex>[dl]^{\ }\\
&&\circ\ar@<0.5ex>[ur]^{\ }\ar@<0.5ex>[ul]^{\ }&}}$.
\end{center}
Then, the statement follows from Lemma \ref{N>n}, Lemma \ref{n+r} and Lemma \ref{tau-tilting infinite quiver}.
\end{proof}

Hence, we have already determined the $\tau$-tilting finiteness of wild Schur algebras over $p\geqslant 5$, except for $S(2,r)$ with $p^2\leqslant r\leqslant p^2+p-1$.

\subsection{The remaining cases}\label{remaining cases}
So far, we have determined the $\tau$-tilting finiteness of Schur algebras, except for the cases in $(\star)$. As we explained in Proposition \ref{S(2,8)over p=2} and Remark \ref{S(5,5) over p=2}, when $p=2$, the cases $S(2,r)$ for $r=17,18$ actually reduce to the principal block of $S(2,8)$ and the cases $S(n,5)$ for $n\geqslant 6$ can be reduced to the principal block of $S(5,5)$. Actually, the cases $S(2,r)$ for $p^2\leqslant r\leqslant p^2+p-1$ over $p\geqslant 5$ also reduce to a block case if one applies \cite[Theorem 13]{EH-two-blocks}, which says that any two blocks of $S(2,r)$ with the same number of simples are Morita equivalent over the same field. We will deal with them separately in the future.

Now, we may look at $S(3,4)$ over $p=2$. We recall from \cite[3.6]{DEMN-tame schur} that the basic algebra of $S(3,4)$ is presented by the bound quiver algebra $\mathcal{M}_4:=\mathbb{F}Q/I$ with
\begin{equation}
Q: \xymatrix@C=1.2cm@R=0.8cm{\circ\ar@<0.5ex>[r]^{\alpha_1 } &\circ\ar@<0.5ex>[r]^{\alpha_2 } \ar@<0.5ex>[l]^{\beta_1 } \ar@<0.5ex>[d]^{\alpha_3 } &\circ\ar@<0.5ex>[l]^{\beta_2 } \\
&\circ\ar@<0.5ex>[u]^{\beta_3 }   &} \ \text{and}\ I: \left \langle \begin{matrix}
\alpha_1\beta_1, \beta_3\alpha_3, \alpha_1\alpha_2,\beta_2\beta_1,\\
\alpha_1\alpha_3\beta_3, \alpha_3\beta_3\beta_1, \beta_1\alpha_1-\alpha_2\beta_2
\end{matrix}\right \rangle.
\end{equation}
We have the following partial results. (Recall that $a_s(A)$ is the number of pairwise non-isomorphic basic support $\tau$-tilting $A$-modules $M$ with $|M|=s$ for $0\leqslant s\leqslant \left | A \right |$.)
\renewcommand\arraystretch{1.2}
\begin{center}
\begin{tabular}{c|cccccccccccc}
$s$&$0$&$1$&$2$&$3$ \\ \hline
$a_s(\mathcal{M}_4)$&1&4&12&40 \\
\end{tabular}.
\end{center}

On the other hand, let $\mathcal{P}_4$ be the preprojective algebra of type $D_4$ which has been studied by Mizuno \cite{Mizuno}. We recall his result as follows.
\renewcommand\arraystretch{1.2}
\begin{center}
\begin{tabular}{c|ccccc||ccccccc}
$s$&$0$&$1$&$2$&$3$&$4$&$\#\mathsf{s\tau\text{-}tilt}\ \mathcal{P}_4$  \\ \hline
$a_s(\mathcal{P}_4)$&1&4&12&40&135&192\\
\end{tabular}.
\end{center}
Then, we expect that $a_4(\mathcal{M}_4)\leqslant 135$ according to our experiences in the calculation process. More generally, we expect that the following conjecture is true.

Let $\left | A\right |=n$ and $P_i$ the indecomposable projective $A$-modules. The Cartan matrix $[c_{ij}^A]$ for $i,j \in \{1,2,\dots, n\}$ is defined by $c_{ij}^A=\mathsf{dim}\ \mathsf{Hom}_A(P_i,P_j)$. Suppose that $A:=\mathbb{F}Q/I_1$ and $B:=\mathbb{F}Q/I_2$ are two algebras given by the same quiver $Q$ with different admissible ideals $I_1$ and $I_2$. It is obvious that $a_0(A)=a_0(B)=1$ and $a_1(A)=a_1(B)=\left | Q_0\right |$.

\begin{conjecture}\label{conjecture-cartan}
Assume that $A$ and $B$ have the same quiver and $B$ is $\tau$-tilting finite. If $c_{ij}^A\leqslant c_{ij}^B$ for any $i,j \in \{1,2,\dots, \left | Q_0 \right |\}$, then $a_s(A)\leqslant a_s(B)$ for $2\leqslant s\leqslant \left | Q_0 \right |$.
\end{conjecture}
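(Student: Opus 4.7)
The plan is to attack this conjecture in two stages: first the case where $A$ is an algebra quotient of $B$, and then the general Cartan-inequality case via a deformation/interpolation argument. In the quotient case $A=B/J$, every $A$-module is a $B$-module on which $J$ acts trivially, and the silting-theoretic description of support $\tau$-tilting modules lets one lift a 2-term silting complex for $A$ to a 2-term complex over $B$ along the surjection $B\twoheadrightarrow A$. I would then verify that the lifted complex is presilting over $B$ by comparing $\mathsf{Hom}_{K^{b}(\mathsf{proj}\ B)}(-,-[1])$ with the $\tau_{A}$-rigidity input, tracking idempotents carefully so that the support-rank is preserved. This approach, in the spirit of the $\tau$-tilting reduction of Demonet-Iyama-Jasso and the poset isomorphisms for central quotients from \cite{EJR18}, should yield $a_{s}(A)\leqslant a_{s}(B)$ whenever $A$ is a quotient of $B$.

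For the general situation where $A=\mathbb{F}Q/I_{1}$ and $B=\mathbb{F}Q/I_{2}$ only share a quiver and satisfy the numerical Cartan inequality, the plan is to interpolate through the locus of admissible ideals $I\subset \mathbb{F}Q$ with $\dim_{\mathbb{F}}\left(e_{j}(\mathbb{F}Q/I)e_{i}\right)\leqslant c_{ij}^{B}$ for all $i,j$. This locus is a closed subscheme of a suitable Grassmannian-type parameter space and contains both $I_{1}$ and $I_{2}$. The hope is that the function $I\mapsto a_{s}(\mathbb{F}Q/I)$ is upper semi-continuous there, with maximum attained at the \emph{maximal} point (smallest ideal), which is $I_{2}$ under the Cartan constraint. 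Upper semi-continuity for the count of $\tau$-rigid indecomposables is at least plausible, since $\tau$-rigidity is controlled by minimal projective presentations whose behavior is governed by a locally closed stratification of the parameter space.

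The hard part will be making the interpolation argument rigorous. The parameter scheme is far from irreducible, and there is no a priori reason that $I_{1}$ and $I_{2}$ lie on the same component or are joined by a chain of flat degenerations inside the Cartan-bounded locus. Worse, it is known that two algebras with identical Cartan matrices can display very different support $\tau$-tilting behavior, even different $\tau$-tilting finiteness, so the Cartan matrix is genuinely coarser than the silting-theoretic data. Thus a soft semi-continuity argument is unlikely to suffice, and a finer categorical mechanism will be needed, for example the construction of a common algebra $C$ admitting both $A$ and $B$ as quotients, or a direct comparison of $g$-vector fans. As a sanity check, I would first verify the conjecture in small-rank cases using the classification of $\tau$-tilting finite two-point algebras in \cite{W-two-point}, and note that the Brauer-tree and Schur-algebra computations carried out in the present paper are all consistent with the conjecture.
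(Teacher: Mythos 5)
The first thing to note is that the statement you are addressing is stated in the paper as Conjecture \ref{conjecture-cartan}: the paper offers no proof of it, only supporting evidence, namely the comparison of $\mathcal{D}_4$ (and $\mathcal{H}_4$) with the preprojective algebra $\mathcal{P}_4$ and the fact that the analogous statement holds for certain two-point algebras by \cite{W-two-point}. So there is no argument in the paper to compare yours against, and your text is likewise a research programme rather than a proof. The one step you could make rigorous is the quotient case: if $A=B/J$, then $-\otimes_B A$ induces a surjection $\mathsf{s\tau\text{-}tilt}\ B\rightarrow \mathsf{s\tau\text{-}tilt}\ A$ (see \cite{DIJ-tau-tilting-finite} and \cite{AIRRT}), and since $P\otimes_B A$ has the same number of indecomposable direct summands as $P$ when $P$ is projective, this surjection preserves support rank, giving $a_s(A)\leqslant a_s(B)$ for all $s$. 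But this does not engage with the actual hypothesis of the conjecture, which is a purely numerical inequality of Cartan matrices carrying no algebra homomorphism in either direction; the cases of interest (e.g.\ $\mathcal{M}_4$ versus $\mathcal{P}_4$) are precisely those where neither algebra is a quotient of the other.

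The second stage is where the genuine gap lies, and you have named it yourself: there is no known semicontinuity of $I\mapsto a_s(\mathbb{F}Q/I)$ on the variety of admissible ideals, no reason that $I_1$ and $I_2$ lie in a common irreducible component of the Cartan-bounded locus or are linked by flat degenerations inside it, and the Cartan matrix is strictly coarser than the data (minimal projective presentations, $g$-vectors) that controls support $\tau$-tilting modules. Note moreover that your remark that two algebras with identical Cartan matrices can have different $\tau$-tilting finiteness is in tension with the conjecture itself: if such a pair can be realized on the \emph{same} quiver, it refutes the conjecture outright rather than merely obstructing this strategy, so that claim needs to be checked carefully before any interpolation argument is attempted. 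In short, the quotient step is correct but insufficient, the interpolation step is not a proof, and the conjecture remains open both in the paper and after your proposal.
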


For example, let $Q$ be the following quiver.
\begin{center}
$\xymatrix@C=1cm@R=0.7cm{1\ar@<0.5ex>[r]^{ }&2\ar@<0.5ex>[l]^{ }\ar@<0.5ex>[d]^{ }\ar@<0.5ex>[r]^{ }&3\ar@<0.5ex>[l]^{ }\\
&4\ar@<0.5ex>[u]^{ }&}$
\end{center}
We consider the preprojective algebra $\mathcal{P}_4$ and the tame block algebra $\mathcal{D}_4$ (or $\mathcal{H}_4$) defined in subsection \ref{subsec-3.2}. Then, we have
\begin{center}
$[c_{ij}^{\mathcal{D}_4}]=\begin{pmatrix}
1&1&0&1\\
1&3&1&2\\
0&1&2&0\\
1&2&0&2
\end{pmatrix}$ and $[c_{ij}^{\mathcal{P}_4}]=\begin{pmatrix}
2&2&1&1\\
2&4&2&2\\
1&2&2&1\\
1&2&1&2
\end{pmatrix}$.
\end{center}
It is easy to check that $c_{ij}^{\mathcal{D}_4}\leqslant c_{ij}^{\mathcal{P}_4}$ for any $i,j \in \{1,2,3,4\}$. Moreover, we have shown that $a_2(\mathcal{D}_4)=12=a_2(\mathcal{P}_4)$, $a_3(\mathcal{D}_4)=36<40=a_3(\mathcal{P}_4)$ and $a_4(\mathcal{D}_4)=61<135=a_4(\mathcal{P}_4)$.

Besides, Conjecture \ref{conjecture-cartan} is proved to be true by \cite[Theorem 1.1]{W-two-point} for the two-point algebras presented by the following quivers with some relations.
\begin{center}
$\xymatrix@C=1cm{\circ \ar[r]^{ }&\circ\ar@(ur,dr)^{ }}$, $\xymatrix@C=1cm{\circ &\circ\ar[l]^{ }\ar@(ur,dr)^{ }}$.
\end{center}

Finally, we have the following conjecture on Schur algebras.
\begin{conjecture}\label{conjecture star}
All wild Schur algebras contained in $(\star)$ are $\tau$-tilting finite.
\end{conjecture}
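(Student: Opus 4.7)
The plan is to handle the four families in $(\star)$ one by one, in each case first reducing the Schur algebra to a small, explicitly presented basic algebra and then either directly computing $\mathsf{s\tau\text{-}tilt}$ or bounding it by an appeal to Conjecture \ref{conjecture-cartan}. For $S(2,8)$ over $p=2$ the target is the block algebra $\mathcal{L}_5$ already identified in the paper; for $S(2,17)$ and $S(2,19)$, Proposition \ref{S(2,8)over p=2} shows the question reduces to $\mathcal{L}_5$. For $S(3,4)$ over $p=2$ the target is $\mathcal{M}_4$. For $S(n,5)$ with $n\geqslant 5$ over $p=2$, Remark \ref{S(5,5) over p=2} reduces everything to $\mathcal{N}_5$. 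Finally, for $S(2,r)$ with $p^2\leqslant r\leqslant p^2+p-1$ and $p\geqslant 5$, I would invoke \cite[Theorem 13]{EH-two-blocks}, which identifies blocks of $S(2,r)$ having the same number of simples; combined with Henke's character formula (Theorem \ref{henke charac}) and the arrow-counting of Lemma \ref{EH-method}, this collapses the whole family to one block algebra $\Lambda_p$ depending only on $p$.

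Next I would carry out the reduction-by-center step uniformly. In each basic algebra $A$ above one locates a maximal set of central elements lying in $\mathsf{rad}\,A$ (as was done explicitly for $\mathcal{D}_3,\mathcal{D}_4,\mathcal{R}_4,\mathcal{H}_4,\mathcal{K}_4,\mathcal{U}_4$ in Section \ref{section-finite-tame} and Proposition \ref{S(2,13)}) and forms the quotient $\widetilde A$; by Proposition \ref{center} we have $\#\mathsf{s\tau\text{-}tilt}\,A=\#\mathsf{s\tau\text{-}tilt}\,\widetilde A$. A GAP computation along the lines of the footnote in Lemma \ref{method of D_4} produces these central relations mechanically. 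The hope is that $\widetilde A$ becomes small enough that either the String Applet \cite{G-string applet} or the explicit left-mutation procedure used for $\mathcal{D}_4$ terminates.

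With $\widetilde A$ in hand, the plan branches into two parallel strategies. The direct one is to extend the partial data already tabulated (for instance, $a_0,a_1,a_2,a_3$ for $\mathcal{M}_4$) to full support-rank by iterated left mutation, organising the search by support-rank as in the proof of Lemma \ref{method of D_4}. The indirect one is to apply Conjecture \ref{conjecture-cartan}: for each target $\widetilde A$, find a $\tau$-tilting finite companion $B$ with the same Gabriel quiver and Cartan matrix dominating that of $\widetilde A$. The canonical candidates are the preprojective algebras of Dynkin type, for which $\mathsf{s\tau\text{-}tilt}$ was completely described by Mizuno \cite{Mizuno}; for $\mathcal{M}_4$ one takes the preprojective algebra $\mathcal{P}_4$ of type $D_4$ (whose Cartan matrix already dominates that of $\mathcal{M}_4$ entrywise), and one looks for analogous type-$A$ preprojective envelopes of $\mathcal{L}_5$ and $\mathcal{N}_5$.

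The main obstacle is of course Conjecture \ref{conjecture-cartan} itself, which at present is verified only in the narrow two-point setting of \cite{W-two-point}. A viable intermediate goal is to establish the conjecture in the restricted generality where $B$ is a preprojective algebra of Dynkin type, using the bijection between support $\tau$-tilting modules of $B$ and elements of the corresponding Weyl group; one would try to produce an injection $\mathsf{s\tau\text{-}tilt}\,\widetilde A\hookrightarrow \mathsf{s\tau\text{-}tilt}\,B$ compatible with mutation by exploiting that the lifted relations in $B$ kill more maps than those in $\widetilde A$, so that every $\widetilde A$-rigid module lifts to a $B$-rigid module. A secondary difficulty is the case $p\geqslant 5$, where $\Lambda_p$ grows with $p$ and no single finite companion suffices; here one likely needs a combinatorial argument directly on the quiver, using that the quiver of the principal block of $S(2,p^2)$ contains neither $\mathsf{Q}_1$, $\mathsf{Q}_2$, nor $\mathsf{Q}_3$ (the observation already noted in the introduction) to exhibit a finite connected component of $\mathcal{H}(\mathsf{s\tau\text{-}tilt}\,\Lambda_p)$ uniformly in $p$.
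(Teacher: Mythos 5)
The statement you are trying to prove is stated in the paper as Conjecture \ref{conjecture star}: the paper itself offers \emph{no} proof, only circumstantial evidence (the absence of the subquivers $\mathsf{Q}_1,\mathsf{Q}_2,\mathsf{Q}_3$, the partial counts $a_0,\dots,a_3$ for $\mathcal{M}_4$, and the reduction remarks of Proposition \ref{S(2,8)over p=2}, Remark \ref{S(5,5) over p=2} and \cite[Theorem 13]{EH-two-blocks}). Your proposal reproduces exactly this circle of ideas, but it is a research programme rather than a proof, and every step where the actual difficulty lives is left open. Concretely: (1) the indirect branch rests entirely on Conjecture \ref{conjecture-cartan}, which is itself unproven except in the two-point setting of \cite{W-two-point}; even granting it, you would still have to exhibit, for each of $\mathcal{L}_5$, $\mathcal{M}_4$, $\mathcal{N}_5$ and the block $\Lambda_p$, a $\tau$-tilting finite algebra with the \emph{same} Gabriel quiver and entrywise larger Cartan matrix, and you only name a candidate for $\mathcal{M}_4$. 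The sketched mechanism (``the lifted relations in $B$ kill more maps, so every rigid module lifts'') is not an argument: $\tau$-rigidity involves $\mathsf{Hom}(M,\tau M)$, and $\tau$ changes when the algebra changes, so there is no evident functorial comparison between $\mathsf{\tau\text{-}rigid}\ \widetilde A$ and $\mathsf{\tau\text{-}rigid}\ B$.

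(2) The direct branch is also not viable as stated: the paper records that $\mathcal{L}_5$ already has at least $500$ basic support $\tau$-tilting modules, so the left-mutation search is not known to terminate, and termination is precisely what is at stake --- an infinite Hasse quiver component cannot be distinguished from a large finite one by partial enumeration. (3) For $p\geqslant 5$ the final step is circular: the observation that the quiver of the principal block of $S(2,p^2)$ contains none of $\mathsf{Q}_1,\mathsf{Q}_2,\mathsf{Q}_3$ is only an obstruction to the paper's method of \emph{proving} $\tau$-tilting infiniteness; it does not produce a finite connected component of $\mathcal{H}(\mathsf{s\tau\text{-}tilt}\ \Lambda_p)$, and no uniform-in-$p$ combinatorial argument is given. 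So while your plan is a faithful elaboration of the strategy the paper itself suggests, it does not close any of the gaps that make the statement a conjecture.
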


\appendix
\section{A complete list of $\tau$-tilting $\widetilde{\mathcal{D}_4}$-modules in the proof of Lemma \ref{method of D_4}.}
We recall the indecomposable projective $\widetilde{\mathcal{D}_4}$-modules $P_i$ as follows.
\begin{center}
$P_1=\begin{smallmatrix}
1\\
3\\
2
\end{smallmatrix},
P_2=\begin{smallmatrix}
2\\
3\\
1\\
3
\end{smallmatrix},
P_3=\begin{smallmatrix}
&3&\\
1&2&4\\
3\\
2
\end{smallmatrix}$, $P_4=\begin{smallmatrix}
4\\
3
\end{smallmatrix}$.
\end{center}
Then, we construct three indecomposable $\widetilde{\mathcal{D}_4}$-modules to describe some basic $\tau$-tilting modules. We first consider the $\tau$-tilting $\widetilde{\mathcal{D}_4}$-module $P_1\oplus P_2\oplus P_3\oplus P_4$ and take an exact sequence with a minimal left $\mathsf{add}(P_1\oplus P_2\oplus P_4)$-approximation $\pi_1$ of $P_3$:
\begin{center}
$P_3\overset{\pi_1}{\longrightarrow}P_1\oplus P_2\oplus P_4\longrightarrow \mathsf{coker}\ \pi_1\longrightarrow 0$.
\end{center}
We define $M_1:=\mathsf{coker}\ \pi_1$ and $P_1\oplus P_2\oplus M_1\oplus P_4$ is again a $\tau$-tilting $\widetilde{\mathcal{D}_4}$-module. Then, we take an exact sequence with a minimal left $\mathsf{add}(P_2\oplus M_1\oplus P_4)$-approximation $\pi_2$:
\begin{center}
$P_1\overset{\pi_2}{\longrightarrow}P_2\oplus M_1\longrightarrow \mathsf{coker}\ \pi_2\longrightarrow 0$
\end{center}
and define $M_2:=\mathsf{coker}\ \pi_2$. Last, we consider the $\tau$-tilting $\widetilde{\mathcal{D}_4}$-module $\substack{1\\3\\2}\oplus \substack{\ \\1\\3\\2}\substack{3\\\ \\\ \\ \  }\substack{\ \\4\\\ \\ \ }\oplus \substack{3\\1\\3\\2}\oplus \substack{\ \\1\\3\\2}\substack{3\\\ \\\ \\ \  }\substack{\ \\2\\\ \\ \ }$ (one may check this by the definition) and define $M_3:=\mathsf{coker}\ \pi_3$ as the cokernel of $\pi_3$, where $\pi_3$ is a minimal left $\mathsf{add}(\substack{\ \\1\\3\\2}\substack{3\\\ \\\ \\ \  }\substack{\ \\4\\\ \\ \ }\oplus \substack{3\\1\\3\\2}\oplus \substack{\ \\1\\3\\2}\substack{3\\\ \\\ \\ \  }\substack{\ \\2\\\ \\ \ })$-approximation with the following exact sequence.
\begin{center}
$\substack{1\\3\\2}\overset{\pi_3}{\longrightarrow}\substack{\ \\1\\3\\2}\substack{3\\\ \\\ \\ \  }\substack{\ \\4\\\ \\ \ }\oplus \substack{\ \\1\\3\\2}\substack{3\\\ \\\ \\ \  }\substack{\ \\2\\\ \\ \ }\longrightarrow \mathsf{coker}\ \pi_3\longrightarrow 0$.
\end{center}
Using $P_1, P_2, P_3, P_4, M_1, M_2, M_3$ and other explicitly described modules, we can give a complete list of $\tau$-tilting $\widetilde{\mathcal{D}_4}$-modules by direct computation of left mutations.
\begin{center}
\renewcommand\arraystretch{2.0}
\begin{tabular}{c|c|c|c}
\hline\hline
$P_1\oplus P_2\oplus P_3\oplus P_4$ & $\substack{\\2\\}\substack{3\\\ \\\ }\substack{\\4\\}\oplus P_2\oplus P_3\oplus P_4$ & $P_1\oplus \substack{\ \\1\\3\\2}\substack{3\\\ \\\ \\ \  }\substack{\ \\4\\\ \\ \ }\oplus P_3\oplus P_4$  & $P_1\oplus P_2\oplus M_1\oplus P_4$ \\ \hline

$P_1\oplus P_2\oplus P_3\oplus \substack{\ \\1\\3\\2}\substack{3\\\ \\\ \\ \  }\substack{\ \\2\\\ \\ \ }$& $\substack{\\2\\}\substack{3\\\ \\\ }\substack{\\4\\}\oplus \substack{\ \\1\\3\\2}\substack{3\\\ \\\ \\ \  }\substack{\ \\4\\\ \\ \ }\oplus P_3\oplus P_4$&$\substack{\\2\\}\substack{3\\\ \\\ }\substack{\\4\\}\oplus P_2\oplus \substack{2\\3}\oplus P_4$&$\substack{\\2\\}\substack{3\\\ \\\ }\substack{\\4\\}\oplus P_2\oplus P_3\oplus \substack{\ \\1\\3\\2}\substack{3\\\ \\\ \\ \  }\substack{\ \\2\\\ \\ \ }$ \\ \hline

$P_1\oplus \substack{\ \\1\\3\\2}\substack{3\\\ \\\ \\ \  }\substack{\ \\4\\\ \\ \ }\oplus \substack{1\\3}\oplus P_4$ &$P_1\oplus P_2\oplus M_1\oplus \substack{1\\ \\}\substack{ \\\ \\3}\substack{2\\ \\}$&$P_1\oplus \substack{1\\ \\}\substack{ \\\ \\3}\substack{4\\ \\}\oplus M_1\oplus P_4$& $M_2\oplus P_2\oplus M_1\oplus P_4$\\ \hline

$P_1\oplus \substack{\ \\1\\3\\2}\substack{3\\\ \\\ \\ \  }\substack{\ \\4\\\ \\ \ }\oplus P_3\oplus \substack{\ \\1\\3\\2}\substack{3\\\ \\\ \\ \  }\substack{\ \\2\\\ \\ \ }$ & $\substack{\\2\\}\substack{3\\\ \\\ }\substack{\\4\\}\oplus \substack{\ \\1\\3\\2}\substack{3\\\ \\\ \\ \  }\substack{\ \\4\\\ \\ \ }\oplus \substack{3\\4}\oplus P_4$& $M_2\oplus P_2\oplus \substack{2\\3}\oplus P_4$&$\substack{\\2\\}\substack{3\\\ \\\ }\substack{\\4\\}\oplus P_2\oplus \substack{2\\3}\oplus \substack{3\\2}$ \\ \hline

$\substack{\\2\\}\substack{3\\\ \\\ }\substack{\\4\\}\oplus P_2\oplus \substack{3\\2}\oplus \substack{\ \\1\\3\\2}\substack{3\\\ \\\ \\ \  }\substack{\ \\2\\\ \\ \ }$&$\substack{\ \\1\\3}\substack{3\\\ \\\ }\substack{\ \\4\\\ }\oplus \substack{\ \\1\\3\\2}\substack{3\\\ \\\ \\ \  }\substack{\ \\4\\\ \\ \ }\oplus \substack{1\\3}\oplus P_4$&$P_1\oplus \substack{1\\ \\}\substack{ \\\ \\3}\substack{4\\ \\}\oplus \substack{1\\3}\oplus P_4$&$P_1\oplus \substack{\ \\1\\3\\2}\substack{3\\\ \\\ \\ \  }\substack{\ \\4\\\ \\ \ }\oplus \substack{1\\3}\oplus \substack{3\\1\\3\\2}$\\ \hline

$M_2\oplus P_2\oplus M_1\oplus \substack{1\\ \\}\substack{ \\\ \\3}\substack{2\\ \\}$&$P_1\oplus \substack{1\\ \\}\substack{ \\\ \\3}\substack{4\\ \\}\oplus M_1\oplus \substack{1\\ \\}\substack{ \\\ \\3}\substack{2\\ \\}$ & $\substack{2\\ \\}\substack{ \\\ \\3}\substack{4\\ \\}\oplus \substack{1\\ \\}\substack{ \\\ \\3}\substack{4\\ \\}\oplus M_1\oplus P_4$ & $M_2\oplus \substack{2\\ \\}\substack{ \\\ \\3}\substack{4\\ \\}\oplus M_1\oplus P_4$\\ \hline

$P_1\oplus \substack{\ \\1\\3\\2}\substack{3\\\ \\\ \\ \  }\substack{\ \\4\\\ \\ \ }\oplus \substack{3\\1\\3\\2}\oplus \substack{\ \\1\\3\\2}\substack{3\\\ \\\ \\ \  }\substack{\ \\2\\\ \\ \ }$&$\substack{\\2\\}\substack{3\\\ \\\ }\substack{\\4\\}\oplus \substack{\ \\1\\3\\2}\substack{3\\\ \\\ \\ \  }\substack{\ \\4\\\ \\ \ }\oplus P_3\oplus \substack{\ \\1\\3\\2}\substack{3\\\ \\\ \\ \  }\substack{\ \\2\\\ \\ \ }$&$\substack{\ \\1\\3}\substack{3\\\ \\\ }\substack{\ \\4\\\ }\oplus \substack{\ \\1\\3\\2}\substack{3\\\ \\\ \\ \  }\substack{\ \\4\\\ \\ \ }\oplus \substack{3\\4}\oplus P_4$ & $M_2\oplus \substack{2\\ \\}\substack{ \\\ \\3}\substack{4\\ \\}\oplus \substack{2\\3}\oplus P_4$\\ \hline

$M_2\oplus P_2\oplus \substack{2\\3}\oplus \substack{2\\3\\1\\\ }\substack{\ \\\ \\\ \\3}\substack{\ \\\ \\2\\ \ }$ & $\substack{\ \\1\\3}\substack{3\\\ \\\ }\substack{\ \\4\\\ }\oplus \substack{\ \\1\\3\\2}\substack{3\\\ \\\ \\ \  }\substack{\ \\4\\\ \\ \ }\oplus \substack{1\\3}\oplus \substack{3\\1\\3\\2}$& $P_1\oplus \substack{1\\ \\}\substack{ \\\ \\3}\substack{4\\ \\}\oplus \substack{1\\3}\oplus S_1$& $M_2\oplus \substack{2\\ \\}\substack{ \\\ \\3}\substack{4\\ \\}\oplus M_1\oplus \substack{1\\ \\}\substack{ \\\ \\3}\substack{2\\ \\}$ \\ \hline

$M_2\oplus P_2\oplus \substack{2\\3\\1\\\ }\substack{\ \\\ \\\ \\3}\substack{\ \\\ \\2\\ \ }\oplus \substack{1\\ \\}\substack{ \\\ \\3}\substack{2\\ \\}$ & $\substack{2\\ \\}\substack{ \\\ \\3}\substack{4\\ \\}\oplus \substack{1\\ \\}\substack{ \\\ \\3}\substack{4\\ \\}\oplus M_1\oplus \substack{1\\ \\}\substack{ \\\ \\3}\substack{2\\ \\}$ & $P_1\oplus \substack{1\\ \\}\substack{ \\\ \\3}\substack{4\\ \\}\oplus S_1\oplus \substack{1\\ \\}\substack{ \\\ \\3}\substack{2\\ \\}$ & $\substack{2\\ \\}\substack{ \\\ \\3}\substack{4\\ \\}\oplus \substack{1\\ \\}\substack{ \\\ \\3}\substack{4\\ \\}\oplus S_4 \oplus P_4$\\ \hline

$M_3\oplus \substack{\ \\1\\3\\2}\substack{3\\\ \\\ \\ \  }\substack{\ \\4\\\ \\ \ }\oplus \substack{3\\1\\3\\2}\oplus \substack{\ \\1\\3\\2}\substack{3\\\ \\\ \\ \  }\substack{\ \\2\\\ \\ \ }$ & $\substack{\\2\\}\substack{3\\\ \\\ }\substack{\\4\\}\oplus \substack{\ \\1\\3\\2}\substack{3\\\ \\\ \\ \  }\substack{\ \\4\\\ \\ \ }\oplus M_3\oplus \substack{\ \\1\\3\\2}\substack{3\\\ \\\ \\ \  }\substack{\ \\2\\\ \\ \ }$ & $\substack{\ \\1\\3}\substack{3\\\ \\\ }\substack{\ \\4\\\ }\oplus \substack{\ \\1\\3\\2}\substack{3\\\ \\\ \\ \  }\substack{\ \\4\\\ \\ \ }\oplus \substack{3\\4}\oplus \substack{3\\1\\3\\2}$ & $M_2\oplus \substack{2\\ \\}\substack{ \\\ \\3}\substack{4\\ \\}\oplus \substack{2\\3}\oplus \substack{2\\3\\1\\\ }\substack{\ \\\ \\\ \\3}\substack{\ \\\ \\2\\ \ }$\\ \hline

$\substack{\ \\1\\3}\substack{3\\\ \\\ }\substack{\ \\4\\\ }\oplus \substack{3\\1\\3}\oplus \substack{1\\3}\oplus \substack{3\\1\\3\\2}$ & $M_2\oplus \substack{2\\ \\}\substack{ \\\ \\3}\substack{4\\ \\}\oplus \substack{2\\3\\1\\\ }\substack{\ \\\ \\\ \\3}\substack{\ \\\ \\2\\ \ }\oplus \substack{1\\ \\}\substack{ \\\ \\3}\substack{2\\ \\}$ & $\substack{2\\ \\}\substack{ \\\ \\3}\substack{4\\ \\}\oplus \substack{1\\ \\}\substack{ \\\ \\3}\substack{4\\ \\}\oplus \substack{1\\ \\}\substack{2 \\3}\substack{4\\ \\}\oplus \substack{1\\ \\}\substack{ \\\ \\3}\substack{2\\ \\}$ & $\substack{1\\ \\}\substack{2 \\3}\substack{4\\ \\}\oplus \substack{1\\ \\}\substack{ \\\ \\3}\substack{4\\ \\}\oplus S_1\oplus \substack{1\\ \\}\substack{ \\\ \\3}\substack{2\\ \\}$\\ \hline

$\substack{2\\ \\}\substack{ \\\ \\3}\substack{4\\ \\}\oplus \substack{1\\ \\}\substack{ \\\ \\3}\substack{4\\ \\}\oplus S_4 \oplus \substack{1\\ \\}\substack{2 \\3}\substack{4\\ \\}$ & $M_3\oplus \substack{3\\2}\oplus \substack{3\\1\\3\\2}\oplus \substack{\ \\1\\3\\2}\substack{3\\\ \\\ \\ \  }\substack{\ \\2\\\ \\ \ }$ & $M_3\oplus \substack{\ \\1\\3\\2}\substack{3\\\ \\\ \\ \  }\substack{\ \\4\\\ \\ \ }\oplus \substack{3\\1\\3\\2}\oplus \substack{3\\4}$ & $\substack{\\2\\}\substack{3\\\ \\\ }\substack{\\4\\}\oplus \substack{\ \\1\\3\\2}\substack{3\\\ \\\ \\ \  }\substack{\ \\4\\\ \\ \ }\oplus M_3\oplus \substack{3\\4}$\\ \hline

$\substack{\\2\\}\substack{3\\\ \\\ }\substack{\\4\\}\oplus \substack{3\\2}\oplus M_3\oplus \substack{\ \\1\\3\\2}\substack{3\\\ \\\ \\ \  }\substack{\ \\2\\\ \\ \ }$ & $\substack{\ \\1\\3}\substack{3\\\ \\\ }\substack{\ \\4\\\ }\oplus \substack{3\\1\\3}\oplus \substack{3\\4}\oplus \substack{3\\1\\3\\2}$ & $S_2\oplus \substack{2\\ \\}\substack{ \\\ \\3}\substack{4\\ \\}\oplus \substack{2\\3}\oplus \substack{2\\3\\1\\\ }\substack{\ \\\ \\\ \\3}\substack{\ \\\ \\2\\ \ }$ & $S_2\oplus \substack{2\\ \\}\substack{ \\\ \\3}\substack{4\\ \\}\oplus \substack{2\\3\\1\\\ }\substack{\ \\\ \\\ \\3}\substack{\ \\\ \\2\\ \ }\oplus \substack{1\\ \\}\substack{ \\\ \\3}\substack{2\\ \\}$ \\ \hline

$\substack{2\\ \\}\substack{ \\\ \\3}\substack{4\\ \\}\oplus S_2\oplus \substack{1\\ \\}\substack{2 \\3}\substack{4\\ \\}\oplus \substack{1\\ \\}\substack{ \\\ \\3}\substack{2\\ \\}$ & $\substack{1\\ \\}\substack{2 \\3}\substack{4\\ \\}\oplus S_2\oplus S_1\oplus \substack{1\\ \\}\substack{ \\\ \\3}\substack{2\\ \\}$ & $\substack{1\\ \\}\substack{2 \\3}\substack{4\\ \\}\oplus \substack{1\\ \\}\substack{ \\\ \\3}\substack{4\\ \\}\oplus S_1\oplus S_4$ & $\substack{2\\ \\}\substack{ \\\ \\3}\substack{4\\ \\}\oplus S_2\oplus S_4 \oplus \substack{1\\ \\}\substack{2 \\3}\substack{4\\ \\}$\\ \hline

$M_3\oplus \substack{3\\2}\oplus \substack{3\\1\\3\\2}\oplus \substack{3\\4}$ & $\substack{\\2\\}\substack{3\\\ \\\ }\substack{\\4\\}\oplus \substack{3\\2}\oplus M_3\oplus \substack{3\\4}$ & $S_3\oplus \substack{3\\1\\3}\oplus \substack{3\\4}\oplus \substack{3\\1\\3\\2}$ & $\substack{1\\ \\}\substack{2 \\3}\substack{4\\ \\}\oplus S_2\oplus S_1\oplus S_4$ \\ \hline

$S_3\oplus \substack{3\\2}\oplus \substack{3\\1\\3\\2}\oplus \substack{3\\4}$\\\hline\hline
\end{tabular}
\end{center}

\ \\

Department of Pure and Applied Mathematics, Graduate School of Information Science and Technology, Osaka University, 1-5 Yamadaoka, Suita, Osaka, 565-0871, Japan.

\emph{Email address}: \texttt{q.wang@ist.osaka-u.ac.jp}
\end{spacing}
\end{document}